\newcommand{\red}[1]{{#1}}
\newcommand{\blue}[1]{{#1}}
\newtheorem{dfn}{Definition}[section]
\newtheorem{thm}[dfn]{Theorem}
\newtheorem{lem}[dfn]{Lemma}
\newtheorem{cor}[dfn]{Corollary}
\newtheorem{rem}[dfn]{Remark}
\newtheorem*{rem*}{Remark}
\newtheorem{open}[dfn]{Open problem}
\newtheorem{prop}[dfn]{Proposition}\makeatletter
\newcommand{\abbr}[1]{{\sc{\lowercase{#1}}}}
\newcommand{\ca}{{\rm Cap}}
\newcommand{\Z}{{\mathbb Z}}
\newcommand{\RR}{{\mathbb R}}
\newcommand{\BB}{{\mathbb B}}
\newcommand{\R}{{\mathcal R}}
\newcommand{\F}{{\mathcal F}}
\newcommand{\G}{{\mathcal G}}
\newcommand{\N}{{\mathbb N}}
\newcommand{\C}{{\mathcal C}}
\newcommand{\sss}{{\mathbb S}}
\newcommand{\D}{{\mathcal D}}
\newcommand{\crn}{{R_n}}
\newcommand{\ocrn}{{\overline{R}_n}}
\newcommand{\ovg}{\hat{g}}
\newcommand{\barT}{\hat T}
\newcommand{\barh}{\tilde h}
\numberwithin{equation}{section}
\begin{document}

\begin{frontmatter}
\title{Capacity of the range of random walk:\\ The law of the iterated logarithm}
\runtitle{LIL of capacity of the random walk range}

\begin{aug}
\author[A]{\fnms{Amir}~\snm{Dembo}\ead[label=e1]{adembo@stanford.edu}}
\and
\author[B]{\fnms{Izumi}~\snm{Okada}\ead[label=e2]{iokada@math.s.chiba-u.ac.jp}}
\address[A]{Mathematics Department and Statistics Department, Stanford University\printead[presep={,\ }]{e1}}

\address[B]{Department of Mathematics and Informatics, Faculty of Science, Chiba University\printead[presep={,\ }]{e2}}
\end{aug}

\begin{abstract}
We establish both  the $\limsup$ and the $\liminf$ law of the iterated logarithm (\abbr{LIL}), for 
the capacity of the range of a simple random walk in any dimension $d\ge 3$. While for $d \ge 4$, 
the order of growth in $n$ of such \abbr{LIL} at dimension $d$ matches 
that for the volume of the random walk range in dimension $d-2$, somewhat 
surprisingly this correspondence breaks down for the capacity of the range at $d=3$.
We further establish such \abbr{LIL} for the Brownian capacity of a $3$-dimensional 
Brownian sample path and novel, sharp moderate deviations bounds for the capacity 
of the range of a $4$-dimensional simple random walk.
\end{abstract}

\begin{keyword}[class=MSC]
\kwd[Primary ]{60F15}
\kwd[; secondary ]{60G50}
\end{keyword}

\begin{keyword}
\kwd{Capacity, random walk, Brownian motion, law of iterated logarithm}
\end{keyword}

\end{frontmatter}

\section{Introduction and Main Results}
Let $\tau_A$ denote the first positive hitting time of a finite set $A$ by a simple random walk (\abbr{srw}) 
on $\Z^d$, denoted hereafter $(S_m)_{m \ge 0}$. Recall that the corresponding (Newtonian) capacity 
is given for $d \ge 3$, by
\begin{align*}
\ca (A)
:= \sum_{x \in A} 
P^x(\tau_A =\infty)
=\lim_{|z|\to \infty}\frac{P^z(\tau_A <\infty)}{\red{G(0,z)}}  
\end{align*}
\red{(where $G(x,y)$ denotes the Green's function of the walk).}
The asymptotics of the capacity $R_n:=\ca(\R_n)$ of the random walk range 
$\R_n :=  \{S_1,\ldots,S_n\}$ is relatively trivial for $d=2$ 
(for then $R_n=\frac{2+o(1)}{\pi} \log$ (diam$\R_n$), see \cite[Lemma 2.3.5]{LA91}).
In contrast, for $d \ge 3$ \blue{such asymptotics} is of an on going interest. 
Indeed, the strong law
\begin{align*}
\lim_{n\to \infty}\frac{\crn}{n}=\alpha_d 
\quad \text{ a.s.,} \qquad \hbox{for all} \quad d \ge 3 \,,
\end{align*}
\blue{is an immediate consequence of the subadditive ergodic theorem, with}
$\alpha_d>0$ iff $d\ge 5$ \blue{(as shown in \cite{JO68}).} Recall Green's function for the \blue{$d$}-dimensional Brownian motion 
\begin{equation}\label{eq:green4}
G_B(x,y)
:=\int_0^\infty (2\pi t)^{-d/2} e^{-|x-y|^2/(2t)} dt
= \begin{cases} \frac{1}{2\pi}|x-y|^{-1} \,, & \qquad d=3 \,, \\
 \frac{1}{2\pi^2} |x-y|^{-2} \,, & \qquad d=4 \,, \end{cases}
\end{equation}
and the corresponding Brownian capacity of $D \subset \RR^{\blue d}$,
\begin{align*}
\ca_B(D)^{-1}
:= \inf \bigg\{ \int\int G_B(x,y) \mu(dx) \mu(dy): \mu(D)=1 \bigg\} \,.
\end{align*}
More recently, 
Chang \cite{Ch17} showed that for $d=3$,
\begin{align*}
\frac{ \crn}{\sqrt{n}} \;
\stackrel{\D}{\Longrightarrow} \;
\frac{1}{3\sqrt{3}}\ca_B (B[0,1]),
\end{align*}
whereas Asselah et al \cite{As3} showed that in this case further, for some $C$ finite,
\begin{align*}
C^{-1} \sqrt{n} \le E[ \crn ]\le C \sqrt{n}\,.
\end{align*}
We denote throughout by $\overline{X}$ the centering $\overline{X} = X-E[X]$ 
of a generic random variable $X$. In higher dimensions $d \ge 4$, the centered 
capacity $\ocrn$, converges after proper scaling to a non-degenerate limit law, 
which is Gaussian iff $d \ge 5$ (see \cite{As5} for $d=4$ and 
\cite{As3, S20} for $d \ge 5$). For $d \ge 5$,
estimates of the corresponding large and moderate deviations are provided in \cite{As1} 
(but they are not sharp enough to imply a \abbr{lil}), 
while the central limit theorem (\abbr{clt}) is further established in \cite{WNS22} for $\crn$ and 
a class of symmetric $\alpha$-stable walks, provided $d > 5 \alpha/2$. 
\blue{We note in passing that similar questions for critical branching random walk on $\Z^d$ conditioned to have total population $n$ have also been studied in \cite{BW*,BW**,BW22}. }

In view of these works, a natural question, which we fully resolve here, is to determine 
the almost sure fluctuations of $n \mapsto \crn$ for the \abbr{srw}, in the form of some \abbr{LIL} 
(possibly after centering $\crn$ when $d \ge 4$). Specifically, using hereafter
$\log_k a=\log (\log_{k-1} a)$ for $k\ge 2$, with $\log_1 a$ for the usual logarithm,
here is our first main result,
about the \abbr{srw} in $\mathbb{Z}^3$.
\begin{thm}\label{m1} For $d=3$, almost surely,
\begin{align}\label{eq:lil-3d}
\limsup_{n\to \infty}\frac{ \crn}{h_3(n)} =1 \,, \qquad
\liminf_{n\to \infty}\frac{ \crn}{\hat{h}_3(n)} =1, \,
\end{align}
where 
\begin{align}\label{h3:def}
h_3(n):=\frac{\sqrt{6}\pi}{9}  (\log_3 n)^{-1} \sqrt{n \log_2 n}, \quad
\hat{h}_3(n):= \frac{\sqrt{6} \pi^2}{9} \sqrt{ n (\log_2 n)^{-1}} .
\end{align}
\end{thm}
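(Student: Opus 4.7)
The plan is to reduce Theorem~\ref{m1} to sharp moderate-deviations estimates for both tails of $\crn$, and then use Borel-Cantelli along suitable subsequences to establish each half of the two LILs. Since $\crn/\sqrt n \Rightarrow Y/(3\sqrt 3)$ with $Y := \ca_B(B[0,1])$ by \cite{Ch17}, I would work in parallel at the level of a $3$-dimensional Brownian motion $B$, exploiting the Brownian scaling $\ca_B(B[0,t]) \stackrel{d}{=} \sqrt t\, Y$ and transferring back and forth via a KMT/Skorokhod strong coupling with polylogarithmic error combined with a discrete-vs-continuous capacity comparison in the spirit of \cite{Ch17} and \cite{As3}.

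The key analytical inputs are sharp tail estimates at the LIL scale of the schematic form
\begin{align*}
P(\crn > (1+\epsilon)\, h_3(n)) &\asymp (\log n)^{-(1+\epsilon)^{2}}, \\
P(\crn < (1\mp \epsilon)\, \hat h_3(n)) &\asymp (\log n)^{-1/(1\mp\epsilon)^{2}},
\end{align*}
whose normalizing constants, embedded in $h_3$ and $\hat h_3$, come from two distinct sources. The lower-tail constant $\sqrt 6\pi^{2}/9$ is of Chung type: small $\crn$ forces the walk's range into a ball of radius comparable to $\crn$ (via $\ca_B(\text{ball of radius }r)=2\pi r$ combined with a diameter-to-capacity comparison for $3$-dimensional Brownian sample paths), reducing the estimate to the $3$-dimensional small-ball probability $P(\sup_{s\le 1}|B_s|<r) \asymp \exp(-\pi^{2}/(2r^{2}))$, whose exponent $\pi^{2}$ is the first Dirichlet eigenvalue of the unit ball. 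The upper-tail constant $\sqrt 6\pi/9$ is more delicate; here the extra factor $1/\log_{3} n$ in $h_3$ reflects that the $3$-dimensional Newtonian capacity of an elongated set is logarithmically smaller than its linear extent, and the sharp constant emerges from a variational analysis of the capacity-versus-cost trade-off on Brownian paths of prescribed large-deviation budget.

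Given these tail estimates, the four halves of the LIL fall out of essentially standard Borel-Cantelli arguments. For $\limsup \crn/h_3(n)\le 1$, take the geometric subsequence $n_k = \lfloor (1+\delta)^{k} \rfloor$; the upper-tail estimate yields $\sum_{k} P(R_{n_{k}}>(1+\epsilon)h_{3}(n_{k}))<\infty$, and capacity monotonicity $\crn \le R_{n_{k+1}}$ for $n\in[n_{k},n_{k+1}]$ combined with the slow variation of $h_{3}$ close the bound after $\delta,\epsilon\to 0^{+}$. For $\limsup \ge 1$, I partition the walk into the independent blocks over $[n_{k-1},n_{k}]$ with rapidly growing $n_{k}$ (say $n_{k}=k^{k}$), set $\R^{(k)}:=\{S_{i} : n_{k-1}<i\le n_{k}\}$, and use $R_{n_{k}}\ge \ca(\R^{(k)})$ by monotonicity; since $n_{k}-n_{k-1}\sim n_{k}$, the sharp lower bound of the upper tail gives $\sum_{k} P(\ca(\R^{(k)})\ge (1-\epsilon) h_{3}(n_{k}))=\infty$, so the second Borel-Cantelli lemma closes by block independence. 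The $\liminf$ is handled symmetrically: $\liminf \ge 1$ via a first Borel-Cantelli applied to $\sum_{k} P(R_{n_{k}} < (1-\epsilon)\hat h_{3}(n_{k}))$, and $\liminf \le 1$ by chaining independent confinement events across disjoint blocks, enforcing on each block that the walk stays within a ball of Chung-LIL radius about its then-current position, so that $\R_{n_{k}}$ remains inside a single ball of radius $\sim\sqrt{n_{k}/\log_{2} n_{k}}$ and hence $R_{n_{k}}$ is correspondingly small.

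The main obstacle is the sharp upper-tail estimate at the LIL scale, and in particular pinpointing the constant $\sqrt 6\pi/9$ together with the factor $1/\log_{3} n$. Unlike the Gaussian-type tails of $\sup_{m}|S_{m}|$ itself, the $3$-dimensional Newtonian capacity of the range is reduced by an additional logarithmic factor whenever the path becomes elongated, and extracting the precise constant demands a careful analysis of the optimal path geometry under a Schilder-type cost constraint coupled to a capacity lower bound, which I expect to be the technical heart of the proof.
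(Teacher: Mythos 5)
Your outline has the right architecture (blocking plus Borel--Cantelli, Chung-type small-ball input for the $\liminf$, elongated-set capacity with a logarithmic correction for the $\limsup$), but the two statements that carry all the content are left unproved, and one of them is asserted through an invalid comparison. First, the sharp tail estimates you posit are never actually established in the paper either, and they are not needed in that form: the $\limsup$ lower bound is obtained \emph{not} from a tail asymptotic for $\crn$ but by conditioning on the one-coordinate event $\{S^1_t\ge \psi(t)\}$ (probability $\asymp (\log t)^{-1}$) and showing, via the capacity/Green's-function inequalities of Lemma \ref{mat1} and the conditional Green-sum estimates of Lemmas \ref{gn1}--\ref{gn1-1+gn1-2}, that on this event the range has cylinder-like shape whose capacity is $(1-o(1))\frac{\pi}{3}\psi(t)/\log_3 t$ (Lemma \ref{cy1} is where the constant $\sqrt6\pi/9$ and the $(\log_3 n)^{-1}$ come from); the $\limsup$ upper bound is obtained by covering $\R_{t_n}$ with a chain of $b_m$ balls of radius $r_m$ and bounding the probability of exiting too many balls via an exponential estimate for sums of i.i.d.\ exit times (Lemma \ref{bl1}), again bypassing any sharp upper-tail asymptotic. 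Deferring all of this to ``a variational analysis of the capacity-versus-cost trade-off'' is precisely the part of the theorem that has to be invented, so the proposal as written does not yet contain a proof of the $\limsup$ half.

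Second, your $\liminf$ lower bound rests on the claim that small $\crn$ forces the range into a ball of radius comparable to $\crn$ via a ``diameter-to-capacity comparison.'' No such deterministic comparison holds: for elongated ranges the capacity is smaller than the diameter by a factor of order $\log$ of the aspect ratio (this is exactly the phenomenon behind Lemma \ref{cy1} and the $(\log_3 n)^{-1}$ in $h_3$), so $\{\crn \le (1-\epsilon)\hat h_3(n)\}$ does not imply confinement at radius $O(\crn)$, and making the implication probabilistically rigorous at the sharp constant is essentially as hard as the theorem itself. The paper's route is different and is the missing idea: by the Poincar\'e--Carleman--Szeg\"o inequality \eqref{q3}, balls minimize Brownian capacity among sets of given volume, so a small capacity forces a small volume of the Wiener sausage $\mathrm{Nbd}(B[0,n/3],n^{1/2-\delta})$; the sharp constant then comes from the van den Berg--Bolthausen--den Hollander moderate deviations \eqref{large1} for the sausage volume, transferred to $\crn$ through the coupling \eqref{q0}--\eqref{q1} built on \cite{Ch17}. (Your $\liminf$ upper bound via confinement events is fine in spirit and matches Section \ref{ioo2}.)
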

Utilizing \eqref{q0}, we also get from Theorem \ref{m1} the following 
consequence about the Brownian capacity of the $3$-dimensional (Brownian) sample path.
\begin{cor}\label{BMm1} For $d=3$, almost surely,
\begin{align*}
\limsup_{n\to \infty}\frac{\ca_B (B[0,n])}{3 \sqrt{3}h_3(n)}=1 \,, \qquad
\liminf_{n\to \infty}\frac{\ca_B (B[0,n])}{3 \sqrt{3}\hat{h}_3(n)}=1 \,.
\end{align*}
\end{cor}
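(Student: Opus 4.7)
The plan is to substitute the pathwise comparison \eqref{q0} between $R_n$ and $\ca_B(B[0,n])$ directly into Theorem \ref{m1}. I expect \eqref{q0} to furnish an almost sure asymptotic identity of the form $\ca_B(B[0,n]) = 3\sqrt{3}\,R_n + o(\hat{h}_3(n))$, where $\hat{h}_3(n) \asymp \sqrt{n/\log_2 n}$ is the smaller of the two \abbr{LIL} scales in \eqref{h3:def} (indeed, $h_3(n)/\hat{h}_3(n) \asymp \log_2 n / \log_3 n \to \infty$). Given such an error bound, the corollary reduces to routine division.

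Specifically, once \eqref{q0} gives $\ca_B(B[0,n]) = 3\sqrt{3}\,R_n + o(\hat{h}_3(n))$ almost surely, dividing by $3\sqrt{3}\,h_3(n)$ and by $3\sqrt{3}\,\hat{h}_3(n)$ respectively yields
\begin{align*}
\frac{\ca_B(B[0,n])}{3\sqrt{3}\,h_3(n)} = \frac{R_n}{h_3(n)} + o(1),
\qquad
\frac{\ca_B(B[0,n])}{3\sqrt{3}\,\hat{h}_3(n)} = \frac{R_n}{\hat{h}_3(n)} + o(1).
\end{align*}
Taking $\limsup$ in the first and $\liminf$ in the second, and invoking \eqref{eq:lil-3d}, yields both identities.

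The main obstacle, which strictly speaking belongs to the proof of \eqref{q0} rather than to the corollary itself, is to upgrade Chang's distributional comparison $R_n/\sqrt{n} \Rightarrow (3\sqrt{3})^{-1} \ca_B(B[0,1])$ into a pathwise comparison with error negligible on the \abbr{LIL} scale $\sqrt{n/\log_2 n}$. The natural route is a strong coupling of the \abbr{SRW} with a Brownian motion (via Skorokhod or \abbr{KMT} embedding, accounting for the \abbr{SRW} step covariance $\frac{1}{3}I$, whence the constant $3\sqrt{3} = 3^{3/2}$), together with stability of Newtonian capacity under the resulting small Hausdorff perturbation of the sample path. The latter step demands control of the discrepancy between the discrete Green's function and $G_B$ on mesoscopic scales, and ruling out pathological configurations in which two nearby path points contribute disproportionately to the capacity.
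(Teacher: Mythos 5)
Your overall architecture --- feed a pathwise walk/Brownian comparison into Theorem \ref{m1} and divide --- is indeed how the paper obtains the corollary, but the comparison you posit is not the one available, and the stronger form you ask for is neither provided nor needed. Equation \eqref{q0} is a \emph{ratio} statement under a coupling, $R_n/\ca_B(B[0,n/3]) \to 1/3$ a.s. (note the Brownian time $n/3$, reflecting the step covariance $\tfrac13 I$); it does not furnish an additive expansion $\ca_B(B[0,n]) = 3\sqrt{3}\,R_n + o(\hat{h}_3(n))$. Since typically $R_n \asymp \sqrt{n}$ while $\hat{h}_3(n) \asymp \sqrt{n/\log_2 n}$, your postulated error term amounts to relative accuracy $o((\log_2 n)^{-1/2})$ in the capacity comparison --- a quantitative strengthening of Chang's result that neither \eqref{q0} nor its proof (which simply invokes \cite{Ch17}) supplies, and which your sketched route (a \abbr{KMT}-type coupling plus stability of Newtonian capacity under Hausdorff perturbation) would have to establish with explicit rates at that precision. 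Deferring this to ``the proof of \eqref{q0}'' mis-attributes the burden: \eqref{q0} as stated is strictly weaker than what your division argument consumes, so the one nontrivial step of your write-up is a genuine gap.

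The gap is easily repaired, because the multiplicative form suffices: both assertions of the corollary concern ratios of nonnegative quantities, and a factor $1+o(1)$ changes neither a $\limsup$ nor a $\liminf$. Thus \eqref{q0} together with \eqref{eq:lil-3d} gives at once, a.s., $\limsup_n \ca_B(B[0,n/3])/(3 h_3(n)) = 1$ and $\liminf_n \ca_B(B[0,n/3])/(3 \hat{h}_3(n)) = 1$. The remaining step, which your proposal hides by building the constant $3\sqrt{3}$ into the assumed identity, is the passage from $B[0,n/3]$ to $B[0,n]$: by Brownian scaling the sequence $\{\ca_B(B[0,n])\}_n$ has the same law as $\{\sqrt{3}\,\ca_B(B[0,n/3])\}_n$ (this is exactly the scaling identity the paper records in Section \ref{ioo2}), and since the above $\limsup$ and $\liminf$ are non-random, the same values transfer through this equality in law, yielding the corollary with constant $3\sqrt{3}=3\cdot\sqrt{3}$. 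Alternatively one may apply \eqref{q0} along $R_{3n}$ and use monotonicity of $n\mapsto R_n$ with $h_3(3n)=(1+o(1))\sqrt{3}\,h_3(n)$, $\hat h_3(3n)=(1+o(1))\sqrt{3}\,\hat h_3(n)$; either way, no additive error bound and no new coupling estimates are required.
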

\red{
\begin{rem*}
From the variational characterization of $\ca_B (B[0,n])$, with $\mu(\cdot)$ 
the push-forward of the uniform law on $[0,n]$ by the Brownian path $t \mapsto B_t$, we get that 
\[
\frac{\pi n^2}{\ca_B(B[0,n])} \le  \int_0^n dt \int_0^t |B_t-B_s|^{-1} ds =: \eta([0,n]^2_<) \,.
\]
It thus follows from the $\limsup$-\abbr{LIL} of \cite[Thm. 1.2]{CR} for $\eta([0,n]^2_<)$, that almost surely,
\[
\liminf_{n\to \infty}\frac{\ca_B (B[0,n])}{3 \sqrt{3}\hat{h}_3(n)} \ge \frac{3 \sqrt{3}}{8 \sqrt{2} \pi \, \rho} \,,
\]
where $\rho$ is given by \cite[formula (1.15) for $d=3$, $\sigma=1$, $\psi(\lambda)=\lambda^2/2$]{CR}.
\end{rem*}}
We next provide the \abbr{LIL} for the centered capacity 
$\ocrn$ of the range, first in case of the \abbr{srw} on $\Z^4$ and then for
\abbr{srw} on $\mathbb{Z}^d$, $d \ge 5$.
\begin{thm}\label{m2}
For $d=4$,  almost surely,
\begin{align}\label{eq:lil-centered}
 \limsup_{n\to \infty}  \frac{ \ocrn}{h_d(n)}=1, 
 \quad
  \liminf_{n\to \infty}  \frac{\ocrn}{\hat{h}_d(n)}=-1, 
\end{align}
where for some non-random $0<c_\star<\infty$,
\begin{align}\label{h4:def}
h_4(n):= \frac{\pi^2}{8}\frac{ n \log_3 n}{ (\log n)^2} \,,
\qquad
\hat{h}_4(n):= c_\star \frac{ n \log_2 n}{ (\log n)^2} \,.
\end{align}
\end{thm}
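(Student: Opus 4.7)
The plan is to deduce Theorem \ref{m2} from sharp two-sided moderate deviation bounds for $\overline{R}_n$ in $d=4$, constituting the novel estimates advertised in the abstract. The asymmetry between $h_4(n)$ (featuring $\log_3 n$) and $\hat{h}_4(n)$ (featuring $\log_2 n$) already signals that the upper and lower tails of $\overline{R}_n$ decay at different rates, consistent with the non-Gaussian limit of $\overline{R}_n (\log n)^2/n$ obtained in \cite{As5}.

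My first step would be to establish upper moderate-deviation bounds of the form
\begin{align*}
P\bigl(\overline{R}_n \ge (1+\epsilon)\, h_4(n)\bigr) &\le \exp\bigl(-(1+\epsilon/2)\log_2 n\bigr), \\
P\bigl(\overline{R}_n \le -(1+\epsilon)\,\hat{h}_4(n)\bigr) &\le \exp\bigl(-(1+\epsilon/2)\log_2 n\bigr),
\end{align*}
with the leading constants $\pi^2/8$ (in $h_4$) and $c_\star$ (in $\hat{h}_4$) emerging from variational / spectral extremal problems: the upper-tail constant should come from the cost of confining the walk to a ball at the critical scale $\sqrt{n/\log n}$ over a time window of length $\sim n/\log_2 n$ (a Dirichlet principal-eigenvalue computation), whereas the lower-tail constant should come from a dual potential-theoretic extremal problem associated with concentration of the walk into small clusters. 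Next I would establish matching lower bounds of the same exponential order by producing these atypical geometries explicitly: confine the increment walk to a small ball to boost $\overline{R}_n$ by $h_4(n)$, using the Brownian scaling of $\ca_B$; force anomalous self-intersection to depress $\overline{R}_n$ by $\hat{h}_4(n)$.

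Given these moderate-deviation ingredients, Theorem \ref{m2} follows by a standard \abbr{LIL} assembly. For $\limsup \overline{R}_n/h_4(n) \le 1$ a.s., apply the first Borel--Cantelli lemma to the upper-tail bound along a subsequence $n_k$, then extend to all $n$ by sweeping the gaps using monotonicity of $R_n$ together with the asymptotics of $E[R_n]$; the subsequence must be chosen finely enough to guarantee that the centering variation between consecutive $n_k$ is $o(h_4(n_k))$. For the matching $\limsup \ge 1$ a.s., pass to a sparser subsequence $m_k$ along which the increment walks $(S_{m_k+j}-S_{m_k})_{j\ge 0}$ are effectively independent of $\F_{m_k}$, and apply the second Borel--Cantelli lemma to the atypical-ball events of the previous step. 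The $\liminf$ half of \eqref{eq:lil-centered} is treated symmetrically using the companion lower-tail bounds.

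The main obstacle is establishing the moderate deviation bounds with the \emph{sharp} leading constants $\pi^2/8$ and $c_\star$. In contrast to $d\ge 5$, where a Gaussian \abbr{CLT} and a homogeneous Green's function make subexponential tails accessible by second-moment methods, the critical dimension $d=4$ exhibits logarithmic corrections in $\V(R_n)$ and a non-Gaussian limit, forcing a multi-scale decomposition of the range, precise control of the renormalized intersection local time at its critical dimension, and reduction of each exponential rate to an explicit variational problem (with the upper-tail one yielding $\pi^2/8$ via a radial Dirichlet-eigenvalue computation). A secondary technical concern is the non-monotonicity of $\overline{R}_n$ in $n$, which is addressed by the fine subsequence choice above.
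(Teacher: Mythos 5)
Your proposal has the right high-level shape (tail bounds plus Borel--Cantelli along subsequences), but the mechanism you assign to the positive deviations of $\ocrn$ is backwards, and with it the form of the claimed moderate-deviation bound. In $d=4$, confining the walk to a small ball does not boost the capacity of the range: confinement increases the self-interaction of the range (the Green's-function cross terms) and hence \emph{lowers} $\crn$; it is the mechanism for the $\liminf$, not the $\limsup$. The paper instead writes $R_n=\sum_{j\le k}U_j-\Delta_{n,k}$ as in \eqref{eq:decomp4} with $k\asymp\log_2 n$ independent blocks and a nonnegative interaction term whose mean is $(1+o(1))h_4(n)$ by \eqref{log3}; the event $\{\ocrn\ge(1-\epsilon)h_4(n)\}$ is produced by spreading the blocks out ballistically so that $\Delta_{n,k}=o(h_4(n))$, an event of probability only $c^{k}=(\log n)^{-O(\gamma)}$ with an adjustable exponent, while exceeding $(1+\epsilon)h_4(n)$ is suppressed far more strongly than $e^{-(1+\epsilon)\log_2 n}$ (it costs $e^{-c\epsilon k\log k}$, cf.\ \eqref{eq:summable}). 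So $h_4$ is an expectation-driven ceiling, not the level at which a smooth rate function crosses $\log_2 n$, and your ansatz $P(\ocrn\ge(1+\epsilon)h_4(n))\le e^{-(1+\epsilon/2)\log_2 n}$ together with a ``matching'' lower bound of that order does not describe the actual tails. Likewise $\pi^2/8$ and the $\log_3 n$ factor do not come from a Dirichlet eigenvalue at scale $\sqrt{n/\log n}$ (that is the $d=3$ story); they come from the expected non-additivity $k\varphi_{n/k}-\varphi_n=(1+o(1))\tfrac{\pi^2}{8}\,n\log k/(\log n)^2$, computed from \cite{As5}, evaluated at $k\asymp\log_2 n$ so that $\log k\approx\log_3 n$.

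Even on the $\liminf$ side, where probabilities of order $(\log n)^{-c}$ are indeed the right currency, your plan assumes exactly the hardest missing step: identifying $c_\star$ via a ``dual potential-theoretic extremal problem'' and proving a sharp lower-tail bound with that constant. The paper does not identify $c_\star$ (this is stated as an open problem); it only proves $0<c_\star<\infty$ by (i) the Hewitt--Savage zero--one law, (ii) a clustering construction giving $P(V_{0,n,2n}\ge c'\bar h_4(n))\ge(\log n)^{-\gamma'}$ with $\gamma'<1/2$ and the second Borel--Cantelli lemma for positivity, and (iii) for finiteness, the tail bounds \eqref{eq:sd2}--\eqref{eq:sd3} in the framework of \cite{BCR}, which rest on new estimates: exponential moments of the mutual Green interaction (Lemma \ref{extail:lem}, via \cite{LG94}), a decoupling second-moment bound (Lemma \ref{moment*}), approximate additivity of $E\crn$ (Lemma \ref{error}), uniform exponential moments of $\tfrac{(\log n)^2}{n}\max_{j\le n}\overline{R}_j$ (Lemma \ref{bkk}), and moderate deviations for $\Delta_{n,k}$ (Lemma \ref{sd1}). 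None of these inputs follows from known $d=4$ results such as \cite{As5}, and your proposal supplies no substitute for them, so as written it both postulates tail estimates in a form that fails for the $\limsup$ and leaves the genuinely new estimates unproved.
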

\begin{thm}\label{m3}
For any $d \ge 5$, the \abbr{LIL}-s \eqref{eq:lil-centered} hold almost surely, now with 
\begin{align}\label{hd:def}
h_d(n)=\hat{h}_d(n) := \sigma_d \sqrt{2 n (1 + 1_{\{d=5\}}\log n ) \log_2 n} \,, \quad d \ge 5, 
\end{align}
where the non-random, finite $\sigma_d^2>0$ are given by the leading asymptotic of
${\rm var}(\crn)$ 
 (c.f. \cite[Thm. A]{S20} for $\sigma_5$ and \cite[Thm. 1.1]{As3} for $\sigma_d$, $d \ge 6$). 
\end{thm}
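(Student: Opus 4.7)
The plan is to combine sharp two-sided Gaussian moderate deviations for $\overline{R}_n$ with the classical Kolmogorov--Strassen Borel--Cantelli strategy for the \abbr{lil}. Throughout, set $a_n := n(1 + 1_{\{d=5\}}\log n)$, so that by \cite{As3, S20} one has $\V(R_n) \sim \sigma_d^2 a_n$. The analytic crux is the moderate deviations estimate
\begin{align*}
\log P\bigl(\pm \overline{R}_n \ge \lambda \sigma_d \sqrt{a_n}\bigr) = -\tfrac{\lambda^2}{2}(1+o(1))
\end{align*}
uniformly in $\lambda \in [1,\sqrt{3\log_2 n}]$, or any comparable range. I would derive this from the Doob decomposition $\overline{R}_n = M_n + C_n$ already used in the \abbr{clt} proofs of \cite{As3, S20}: one shows that $C_n$ is a predictable remainder with $\V(C_n) = o(a_n)$, while $M_n$ has martingale differences whose conditional variances sum to $\sigma_d^2 a_n(1+o(1))$. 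Sharp exponential-moment control of the differences then yields the upper tail by an Azuma-type inequality with the correct Gaussian constant, and the matching lower tail follows either by Cram\'er-tilting the martingale or by combining the \abbr{clt} with a separate exponential tightness argument. For $d=5$ the martingale increments are unbounded and must be truncated at scale $\sqrt{\log n}$, with the resulting error controlled via the fourth-moment estimates of \cite{S20}.

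For the upper bound in \eqref{eq:lil-centered}, fix $\epsilon > 0$ and $n_k := \lfloor \rho^k \rfloor$ with $\rho > 1$ close to $1$. Applying the above moderate deviations at $\lambda = (1+\epsilon)\sqrt{2\log_2 n_k}$ yields
\begin{align*}
\sum_k P\bigl(\overline{R}_{n_k} \ge (1+\epsilon) h_d(n_k)\bigr) \le \sum_k (\log n_k)^{-(1+\epsilon)^2(1+o(1))} < \infty,
\end{align*}
and the first Borel--Cantelli lemma gives $\overline{R}_{n_k} \le (1+\epsilon) h_d(n_k)$ eventually. To pass to every $n \in [n_k, n_{k+1}]$, I would apply the Markov property at time $n_k$ together with the subadditivity $R_n \le R_{n_k} + \ca(\{S_{n_k+1},\ldots,S_n\})$; the second summand is distributed as $R_{n-n_k}$ and is independent of $\F_{n_k}$, while the expectation bias $E[R_{n_k}] + E[R_{n-n_k}] - E[R_n]$ is $o(h_d(n_k))$ thanks to the near-linearity of $n \mapsto E[R_n]$ for $d \ge 5$. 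A maximal version of the moderate deviations of the preceding paragraph then controls $\max_{n_k \le n \le n_{k+1}} \overline{R}_n$ off a summable bad event, provided $\rho - 1$ is small enough compared with $\epsilon^2$. Letting $\rho \downarrow 1$ and $\epsilon \downarrow 0$ delivers the upper half of \eqref{eq:lil-centered}.

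For the matching lower bound, take now $\rho$ large and set $Y_k := \ca(\{S_{n_{k-1}+1},\ldots,S_{n_k}\})$. By the strong Markov property $Y_k$ is independent of $\F_{n_{k-1}}$ and equal in distribution to $R_{\Delta_k}$, with $\Delta_k := n_k - n_{k-1} \sim (1-\rho^{-1}) n_k$. The lower-tail half of the moderate deviations gives
\begin{align*}
P\bigl(Y_k - E[R_{\Delta_k}] \ge (1-\epsilon) h_d(n_k) \,\bigm|\, \F_{n_{k-1}}\bigr) \ge \exp\bigl(-(1-\epsilon/2)^2 \log_2 n_k\bigr),
\end{align*}
whose conditional sums diverge, so the conditional second Borel--Cantelli lemma produces infinitely many $k$'s for which this event occurs. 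Translating back to $\overline{R}_{n_k}$ requires bounding the overlap $R_{n_k} - R_{n_{k-1}} - Y_k$, which by transience of \abbr{srw} in $d\ge 5$ and standard hitting-probability estimates for capacity has mean and variance of order $o(h_d(n_k))$; together with the bound $|\overline{R}_{n_{k-1}}| = o(h_d(n_k))$ furnished by the already-proven upper half (for $\rho$ sufficiently large), this yields the lower bound in \eqref{eq:lil-centered}.

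The main obstacle is the sharp moderate deviations of the first paragraph: the estimates in \cite{As1} do not give the correct Gaussian constant $\tfrac12$ in the exponent, so a fresh martingale-based argument has to be developed, with the critical dimension $d=5$ being the most delicate in view of its $n\log n$ variance growth and the unbounded martingale increments that force a careful truncation analysis.
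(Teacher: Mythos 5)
Your argument hinges entirely on the sharp two-sided Gaussian moderate deviations
$\log P(\pm \overline{R}_n \ge \lambda \sigma_d \sqrt{a_n}) = -\tfrac{\lambda^2}{2}(1+o(1))$ uniformly up to $\lambda \asymp \sqrt{\log_2 n}$, and this is precisely the step you do not prove: it is not available in the literature (the known deviation bounds of \cite{As1} are explicitly not sharp enough to yield an \abbr{lil}), and your sketch for it does not close the gap. An Azuma-type bound with the \emph{correct} constant $\tfrac12$ requires the martingale differences of the Doob decomposition to be (conditionally) sub-Gaussian at scale $o(\sqrt{a_n/\log_2 n})$ with conditional variances concentrating at $\sigma_d^2 a_n$; the \abbr{clt} arguments of \cite{As3,S20} supply only second (and fourth) moment control, not the exponential-moment control you invoke, and for $d=5$ the truncation you mention is exactly where the difficulty lives. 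Worse, your proposed lower tail is unfounded: combining the \abbr{clt} with exponential tightness controls probabilities only at fixed $\lambda$ and cannot produce a lower bound of order $(\log n)^{-(1-\epsilon)}$ at $\lambda \sim \sqrt{2\log_2 n}$, while Cram\'er-tilting a martingale with strongly dependent, unbounded increments is itself a research-level task, not a routine step. Since both halves of your Borel--Cantelli scheme (summability for the upper bound, divergence of conditional probabilities for the lower bound) feed on this unproved estimate, the proposal as written does not constitute a proof.

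The paper avoids moderate deviations altogether. It decomposes $R_{n_k}$ as in \eqref{eq:decomp4} into a sum of independent block capacities $U_j$ minus a nonnegative cross term $\Delta_{n_k,k}$, shows via Lemma \ref{vv} that the cross term is $o(h_d(n_k))$ almost surely (its moments are $O(f_d(\cdot)^\ell)$ with $f_5(n)=\sqrt n$, $f_6(n)=\log n$, $f_d\equiv 1$ for $d\ge 7$), proves moment bounds of every order for increments $\overline{R}_b-\overline{R}_a$ together with a maximal inequality (Lemma \ref{l-moment}, \eqref{eq:BK-33b}), and then runs the Skorokhod-embedding strong approximation of \cite{BK02} to couple $\overline{R}_{n_k}$ with $\sigma_d B_{\rho_{n_k}^2}$ up to an error $o(h_d(n_k))$; the \abbr{lil} is then inherited from Khinchin's \abbr{lil} for Brownian motion, with interpolation between the $n_k$ handled by \eqref{eq:BK-33b}. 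If you want to salvage your approach, you would need to actually establish the sharp moderate deviations (which would be a new result of independent interest, stronger than what Theorem \ref{m3} requires); otherwise the embedding route is the one that works with the moment estimates one can realistically prove.
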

\begin{rem} Our proof of Theorem \ref{m3} via Skorokhod embedding, also yields Strassen's \abbr{LIL} 
for the a.s. set of limit points in $C([0,1])$ of the functions 
$\{ t \mapsto h_d(n)^{-1} \overline{R}_{tn} \}$, for any $d \ge 5$. 
\end{rem} 
\begin{rem} 
\red{The moment generating function of the 
limit in law of $- ((\log n)^2/n) \overline{R}_n$ for the \abbr{srw} on $\Z^4$, blows-up 
at a finite, positive $\lambda$.  The value of $\lambda$ is identified in \cite[Theorem 1.3]{CR}.
In}
Lemma \ref{bkk} we establish the uniform in $n$ boundedness 
of the moment generating function of $\blue{-}((\log n)^2/n) \overline{R}_n$ for a small enough argument.
\end{rem}

We note that  $\crn \approx n E[\hat{P}^{S_{n/2}}(\hat{\tau}_{\R_n} =\infty)]$ at any fixed $n \gg 1$
and $d \ge 3$, where $\hat{P}$ and $\hat{\tau}_A$ denote the law and the first hitting time 
by an i.i.d. copy 
of the \abbr{srw}.
 Similarly, the volume 
of $\R_n$ in any dimension ($d \ge 1$), is approximately $n P(\tau_0 >n)$. It has been 
observed before, see for example \cite[Sec. 6]{As3}, that the typical order of growth of 
$E[\hat{P}^{S_{n/2}}(\hat{\tau}_{\R_n} =\infty )]$ at any $d \ge 3$, matches that 
of $P(\tau_0 > n)$ at $d'=d-2$, yielding the same order of 
growth in $n$ for $\crn$ at $d \ge 3$ and for the volume of $\R_n$ at $d'=d-2$. 
In Theorems \ref{m2} and \ref{m3}, our \abbr{LIL} for $d \ge  4$ adheres to such a match
with the scale for the \abbr{LIL} of the volume of $\R_n$ at $d'=d-2$ 
(see \cite{BCR, BK02, NW75} for the latter \abbr{LIL} at any $d' \ge 2$, as well as
the limit distribution results for the volume of the Wiener sausage
at $d' \ge 2$, and the corresponding \abbr{LIL} at $d' \ge 3$, in \cite{LG88} and 
\cite{CH07,WG11}, respectively). In contrast, 
this relation \emph{breaks down at the $\limsup$ \abbr{LIL} for $d=3$,} with the 
appearance of the novel factor $(\log_3 n)^{-1}$ in Theorem \ref{m1}. 
Nevertheless, even at $d=3$ the relevant deviations of $\crn$ are due to  
those in the diameter of $\R_n$, except that the upper tails for these two variables differ
in their growth rates. Specifically, our proofs in Sections \ref{ioo1} and \ref{ioo2} yield the 
following (sharper) result. 
\begin{prop}\label{m1+}
Let $M_n:=\max_{1\le i\le n}|S_i|$. For \abbr{srw} of $\mathbb{Z}^3$ and any $\epsilon>0$, 
\begin{align*}
P( \{\crn \ge (1-\epsilon)h_3(n) \}\cap \{M_n \ge (1-\epsilon)\psi(n) \} \quad \text{i.o.}) &=1, \\
P( \{\crn \le (1+\epsilon)\hat{h}_3(n) \}\cap \{M_n \le (1+\epsilon)\hat{\psi}(n) \}\quad \text{i.o.}) &=1,
\end{align*}
where $\psi(n):=\sqrt{(2/3) n \log_2 n}$ and $\hat{\psi}(n):=\pi\sqrt{(1/6) n (\log_2 n)^{-1}}$. 
\end{prop}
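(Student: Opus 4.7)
The plan is to extract Proposition \ref{m1+} directly from the proofs of the two one-sided \abbr{LIL}-s in Theorem \ref{m1}. The key observation is that in dimension three the capacity of the range is squeezed between two bounds involving the maximal displacement $M_n$: on one side, monotonicity of capacity and the asymptotic $\ca(B(0,r)\cap\Z^3)=\tfrac{2\pi}{3}r(1+o(1))$ (which follows from the Green's function estimate $G(0,z)\sim 3/(2\pi|z|)$ for \abbr{srw} in $\Z^3$) give $\crn\le \tfrac{2\pi}{3} M_n (1+o(1))$, since $\R_n\subset B(0,M_n)$; on the other side, producing a value of $\crn$ at scale $h_3(n)$ requires $M_n$ to reach scale $\psi(n)$, because even conditionally on $M_n\sim\psi(n)$ the event $\{\crn\ge(1-\epsilon)h_3(n)\}$ has probability only of order $(\log_3 n)^{-1}$, which is precisely the factor distinguishing $h_3$ from $\tfrac{2\pi}{3}\psi$. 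Thus both \abbr{LIL} extremes of $\crn$ are driven by the corresponding extremes of $M_n$, and the sets realizing each extreme of $\crn$ already enforce the associated $M_n$-condition.

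For the $\liminf$ assertion the argument is essentially immediate from the Chung-type \abbr{LIL} for the \abbr{srw} in $\Z^3$, namely $\liminf_n M_n/\hat{\psi}(n) = 1$ a.s., which is equivalent to $P(M_n\le (1+\epsilon)\hat{\psi}(n)\text{ i.o.})=1$. On this small-ball event the capacity bound above, together with the identity $\hat{h}_3=\tfrac{2\pi}{3}\hat{\psi}$ (by direct comparison of the constants in \eqref{h3:def}), yields $\crn\le(1+\epsilon)\hat{h}_3(n)(1+o(1))$. This is the same chain of reasoning driving the $\liminf$ half of Theorem \ref{m1}, so both conditions hold simultaneously along any subsequence realizing the Chung-type \abbr{LIL}.

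For the $\limsup$ assertion one revisits the second Borel--Cantelli construction behind the $\limsup$ lower bound of Theorem \ref{m1}. Working along a rapidly growing subsequence $n_k$ with disjoint blocks $[n_{k-1},n_k]$, one considers the independent block-events obtained by intersecting (a) the event that the increment walk on $[n_{k-1},n_k]$ reaches radius at least $(1-\epsilon)\psi(n_k)$ and (b) the event that the range of this increment walk has capacity at least $(1-\epsilon)h_3(n_k)$. The probability of (a) is of order $(\log n_k)^{-(1-\epsilon)^2}$ by the classical \abbr{LIL}, while the conditional probability of (b) given (a) is of order $(\log_3 n_k)^{-1}$, which is exactly the mechanism responsible for the $(\log_3 n)^{-1}$ factor in $h_3$. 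Since $|S_{n_{k-1}}|=O(\sqrt{n_{k-1}})\ll\psi(n_k)$ and capacity is translation-invariant, each block-event sits inside $\{M_{n_k}\ge(1-2\epsilon)\psi(n_k)\}\cap\{R_{n_k}\ge(1-\epsilon)h_3(n_k)\}$, and as the sum of block-probabilities diverges, the second Borel--Cantelli lemma yields the i.o.\ statement (modulo replacing $\epsilon$ by $2\epsilon$).

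The main obstacle is the quantitative input in step (b), i.e.\ the lower bound of order $(\log_3 n)^{-1}$ on the conditional probability that a single long excursion of the \abbr{srw} to radius $\psi(n)$ builds up a capacity of order $h_3(n)$. This is precisely the hardest ingredient in the $\limsup$ direction of Theorem \ref{m1}; once it is in place, the simultaneous $M_n$-condition in Proposition \ref{m1+} requires no additional work.
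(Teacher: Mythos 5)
Your treatment of the $\liminf$ statement is correct, and in fact somewhat more direct than the paper's: you combine the Chung-type \abbr{LIL} $\liminf_n M_n/\hat{\psi}(n)=1$ (which the paper also invokes, see \eqref{eq:lil-Mn}) with monotonicity of capacity and $\ca(\BB(0,r)\cap\Z^3)=\frac{2\pi}{3}r(1+o(1))$, so that on $\{M_n\le(1+\epsilon)\hat{\psi}(n)\}$ one gets $\crn\le(1+\epsilon)(1+o(1))\hat{h}_3(n)$ deterministically, using $\hat{h}_3=\frac{2\pi}{3}\hat{\psi}$. The paper instead routes through Chang's coupling \eqref{q0}, the Brownian small-ball estimate \eqref{eq:Tx-lb} and the capacity of the Brownian ball; both arguments rest on the same Chung-type small-ball input, and yours avoids the coupling while delivering the two events simultaneously, as required.

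The $\limsup$ half has a genuine gap. Your skeleton is the paper's (independent block events, a displacement event (a) intersected with a capacity event (b), second Borel--Cantelli, with the pre-block position negligible), but the entire difficulty is the conditional lower bound for (b) given (a), which you assert is ``of order $(\log_3 n_k)^{-1}$'' and then explicitly defer as the main obstacle; nothing in the proposal proves it, so the argument assumes exactly the hardest ingredient of the $\limsup$ direction. Moreover, the heuristic you give for it misreads the mechanism: the factor $(\log_3 n)^{-1}$ in $h_3$ is not a conditional-probability cost. In the paper, conditionally on the displacement event the capacity exceeds $(1-\epsilon)h_3(n)$ with conditional probability tending to \emph{one} (Lemma \ref{gn1} and \eqref{eq:lem31-app}); the $(\log_3 n)^{-1}$ is the deterministic capacity-to-length ratio of the cylinder-like range (Lemma \ref{cy1}), obtained via the Green-sum bound $\sum_{\ell} G(S_i,S_\ell)\lesssim t/h_3(t)$ for most indices $i$, which is converted to a capacity lower bound by Lemma \ref{mat1}. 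Proving that conditional Green-sum control is what requires the sharp one-dimensional moderate-deviation estimates of Lemmas \ref{gn2+gn3} and \ref{gn1-1+gn1-2}, none of which appears in your proposal. Two smaller points: your claimed bound, though much weaker than the truth, would still make the Borel--Cantelli series diverge, so the plan could in principle be completed once the estimate is actually supplied; but the subsequence must remain essentially geometric (if $n_k$ grows too fast, $\sum_k(\log n_k)^{-(1-\epsilon)^2}$ converges and the second Borel--Cantelli step fails), and controlling $|S_{n_{k-1}}|$ requires the a.s. \abbr{LIL} bound $O(\psi(n_{k-1}))$ together with a large ratio $n_k/n_{k-1}$, not merely the typical order $\sqrt{n_{k-1}}$.
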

We note in passing that Proposition \ref{m1+} is a rotation-invariant result,  and in particular
it applies also under any (fixed) rotation of the \abbr{srw} lattice $\mathbb{Z}^3$. Further, 
Proposition \ref{m1+} implies that almost surely, the $\limsup$ (resp. $\liminf$) 
of $\crn$ are essentially attained simultaneously with those for $M_n$, since for $d=3$, 
almost surely, 
\begin{align}\label{eq:lil-Mn}
\limsup_{n\to \infty} \frac{M_n}{\psi(n)}=1\,, \qquad 
\liminf_{n\to \infty} \frac{M_n}{\hat{\psi}(n)}=1.
\end{align}
Indeed, by the invariance principle it suffices for proving \eqref{eq:lil-Mn} to show 
the equivalent a.s. statement 
for $3$-dimensional Bessel process, and the latter follows by mimicking the proof 
of Chung's one-dimensional \abbr{LIL} (see \cite{Ch48}), starting with the estimate
(\ref{chung*}). We note in passing that while the scaling $\psi(n)$ of the 
upper fluctuation of $M_n$ is the same as that for a single coordinate of our \abbr{srw},
this is not true about the scaling $\hat{\psi}(n)$, which is about the tail probability 
of confining the walk to stay within an Euclidean ball in $\RR^3$. 

In a followup work,  \cite[Corollary 1.2]{AO} determines
the value of $c_\star$ of Theorem \ref{m2} 
in terms of the best constant
in a generalized Gagliardo-Nirenberg inequality.  In contrast, 
the following analog of Proposition \ref{m1+} 
in case $d \ge 4$,  is still open. 
\begin{open}
Consider the \abbr{srw} $S_i=(S_i^1,\ldots,S_i^d) \in \mathbb{Z}^d$, $d\ge4$.
For $d'=d-2$, let $\hat{S}^{d'}_i=(S^1_i,\ldots ,S^{d'}_i,0,0)$ and
$V_{d'} (n) =|\{\hat{S}^{d'}_1, \ldots, \hat{S}^{d'}_n\}|$. Pick 
non-random $\psi_{d'}(n)$ such that a.s.
\begin{align*}
\limsup_{n\to \infty} \frac{\overline{V}_{d'}(n)}{\psi_{d'}(n)}=1 \,.
\end{align*}
We then conjecture that for $h_d(n)$ of Theorems \ref{m2}-\ref{m3} and any $\epsilon>0$, 
\begin{align*}
P( \{\ocrn \ge (1-\epsilon)h_d(n) \}\cap \{\overline{V}_{d'}(n) \ge (1-\epsilon)\psi_{d'}(n) \} \quad \text{i.o.})
=1.
\end{align*}
\end{open}

While we consider throughout only the discrete time \abbr{srw} whose
increments are the $2d$ neighbors of the origin in $\Z^d$, due to sharp 
concentration of Poisson variables, all our results apply also for the continuous time 
\abbr{srw} with i.i.d. Exponential($1$) clocks and up to the scaling $n \mapsto (1-\rho) n$,
also to the $\rho$-lazy discrete time \abbr{srw}. By definition of $R_n$, our results apply 
to any random walk on a group with a finite symmetric set of generators, whose words 
are isomorphic to those of the \abbr{srw} (e.g. an invariance of our results under any non-random, 
invertible affine transformation of the walk). We note in passing the 
recent work \cite{MSS21} on the strong law for any symmetric random walk on a group of 
growth index $d$ and the corresponding \abbr{clt} in case $d \ge 6$, suggesting 
the possibility of a future extension of our \abbr{lil}-s in this context.

Beyond the intrinsic interest in $\crn$, 
its asymptotic is also relevant for the study of intersections between two independent random walks 
(e.g., see \cite[Ch. 3]{LA91}). Similarly, \cite{As2*,As2**} utilize bounds on 
$\crn$ to gain insights about the so called Swiss cheese picture for $d=3$. Further, 
to understand Sznitman's \cite{SZ10} random interlacement model, one may use
moment estimates for the capacity of the union of ranges  (c.f. \cite{Ch17} and 
the references therein). Finally, the capacity equals the summation of all 
entries of the inverse of the (positive definite) Green's function matrix (see \eqref{gr1}),
a point of view which \cite{OK} uses, for $d=2$,
to estimate the geometry of late points of the walk.

As for the organization of this paper, we prove Theorem \ref{m1} in Section \ref{sec:m1}, 
relying on \blue{certain} relations between the capacity and Green's function which we explore in 
Section \ref{sec-Green}. Our proof of Theorem \ref{m1} 
further indicates that the $\limsup$-\abbr{lil} is due to 
exceptional time where $\R_n$ has a cylinder-like shape, with one dimension 
being about $\psi(n)$ while the other two are $O(\psi(n)/(\log_2 n))$
(see Lemma \ref{cy1} and Section \ref{subsec:limsup-3u}). In contrast, the $\liminf$-\abbr{lil}
seems to be due to times where the shape of $\R_n$ is close enough to a ball of radius 
$\hat{\psi}(n)$ to approximately match the capacity of such a ball (see \eqref{q2b} and \eqref{q3}).

Sections \ref{sec:m2-d4} and \ref{sec:m2-dge5} are devoted to the proofs of 
Theorems \ref{m2} and \ref{m3}, respectively. Our proofs rely on 
the decomposition \eqref{eq:decomp4} of $R_{n_k}$ as the sum of $k$ independent 
variables $\{U_j\}$ which are the capacities of the walk restricted to the $k$ 
parts of a partition of $[1,n_k]$, minus some random $\Delta_{n_k,k} \ge 0$ (which ties
all these parts together). For any $d \ge 5$ the effect of $\Delta_{n_k,k}$ on the \abbr{lil} is 
negligible, so upon coupling $R_{n_k}$ with a one-dimensional Brownian motion, 
we immediately get the \abbr{lil} for the former out of the standard \abbr{lil} for the latter.
As seen in Section \ref{sec:m2-d4}, the situation is \emph{way more delicate for $d=4$,}
where $E \Delta_{n,k} \approx h_4(n)$ dominates for a suitable slowly growing 
$k=k_n$ the fluctuations of the i.i.d. $\{U_j\}$. 
The $\limsup$-\abbr{lil} is then due to the exceptional (random) sequence $\{n_k\}$ where 
$\Delta_{n_k,k} = o(h_4(n_k))$, while the $\liminf$-\abbr{lil} is due to the 
exceptional $\{n_k\}$ for which $\Delta_{n_k,k} \approx \hat{h}_4(n_k) \gg E \Delta_{n_k,k}$. 
Indeed, whereas Theorem \ref{m2} is proved via the framework developed in 
\cite[Section 4]{BK02} for the \abbr{LIL} for the volume of $\R_n$ 
in the planar case ($d'=2$), special care is needed here in order to
establish tight control on the moderate deviations of $\Delta_{n,k}$ and $U_j$ in case $d=4$
(c.f. Lemmas \ref{extail:lem}, \ref{moment*}, \ref{bkk} and \ref{sd1}, which
may be of independent interest).

\subsection*{Acknowledgments}
This research was supported in part by NSF grant DMS-1954337 (A.D.), 
by JSPS KAKENHI grant-in-aid for early career scientists JP20K14329 (I.O.) 
and by a JSPS overseas research fellowship (I.O.).
\blue{We also thank the anonymous referees for their detailed feedback which greatly improved 
the exposition of this work.}

\section{Capacity geometry and Green's function}\label{sec-Green}

The following asymptotic for the $3$-dimensional capacity of cylinder-like domains
(which we prove at the end of this section), 
is behind the factor $(\log_3 n)^{-1}$ in the $\limsup$-\abbr{lil} of \eqref{eq:lil-3d}. 
\begin{lem}\label{cy1}
For $m \ge 1$ and $r \ge k \in \N$, let
\begin{align*}
{\C}_m(\ell,r):=& (\ell \Z)^3 \cap \{(x_1,x_2,x_3) : x_1^2+x_2^2 \le r^2, 1 \le x_3 \le m \}.
\end{align*}
Fix $b<2/3$, $r_m=o(m)$, $r_m \uparrow \infty$. If 
$\C_m(1,r_m) \supseteq \C_m \supseteq \C_m(\ell,r_m)$ for some $\ell \le r_m^{b}$, then
\begin{align}\label{eq:cyl-cap}
\lim_{m\to \infty}\frac{ \ca({\C}_m)}{m (\log (m/r_m))^{-1}}
=\frac{\pi}{3}.
\end{align}
\end{lem}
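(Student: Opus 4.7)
The plan is to establish matching asymptotic upper and lower bounds on $\ca(\C_m)$ via the variational characterization
$$
\ca(A)^{-1} = \inf_{\mu \in \mathcal{P}(A)} \mathcal{E}_G(\mu), \qquad \mathcal{E}_G(\mu) := \sum_{x,y \in A} G(x,y)\mu(x)\mu(y),
$$
combined with the classical Green function asymptotic $G(x,y) = \frac{3}{2\pi|x-y|}(1+O(|x-y|^{-2}))$ on $\Z^3$. Since capacity is monotone under the inclusions $\C_m(\ell, r_m) \subseteq \C_m \subseteq \C_m(1, r_m)$, it suffices to establish the asymptotic $\frac{\pi m}{3\log(m/r_m)}(1+o(1))$ for the two extremal sets.

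For the \emph{lower bound} on $\ca(\C_m(\ell, r_m))$, I would take $\mu$ to be the uniform probability measure on the $N \asymp \pi r_m^2 m/\ell^3$ lattice sites of $\C_m(\ell,r_m)$. The hypothesis $\ell \le r_m^b$ with $b<2/3$ gives $\ell^3 \ll r_m^2$, so the double sum $\mathcal{E}_G(\mu)$ is well approximated by the continuous integral $\frac{3}{2\pi V^2}\iint |x-y|^{-1}\,dx\,dy$ over the continuous cylinder of volume $V = \pi r_m^2 m$. A cylindrical-coordinates computation shows that the dominant contribution comes from pairs with axial separation in $[r_m, m]$, giving $\iint |x-y|^{-1}\,dx\,dy \sim 2\pi^2 r_m^4 m\,\log(m/r_m)$; dividing by $V^2$ yields $\mathcal{E}_G(\mu) \sim \frac{3\log(m/r_m)}{\pi m}$, which produces the required bound $\ca(\C_m(\ell, r_m)) \ge \frac{\pi m}{3\log(m/r_m)}(1-o(1))$.

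For the \emph{upper bound} on $\ca(\C_m(1,r_m))$, I would reduce to a 1D problem via axial projection. The elementary inequality $|x-y|^2 \le (x_3-y_3)^2 + 4r_m^2$ (valid for all $x,y$ with $x_1^2+x_2^2, y_1^2+y_2^2 \le r_m^2$) combined with the Green function asymptotic gives $G(x,y) \ge \frac{3(1-o(1))}{2\pi} K(x_3-y_3)$, where $K(u) := (u^2 + 4r_m^2)^{-1/2}$. Consequently, for every probability measure $\mu$ on $\C_m(1, r_m)$ with axial projection $\bar\mu(i) := \sum_{x_3=i} \mu(x)$,
$$
\mathcal{E}_G(\mu) \ge \frac{3(1-o(1))}{2\pi}\sum_{i,j=1}^m K(i-j)\,\bar\mu(i)\,\bar\mu(j).
$$
Classical 1D potential theory for the positive-definite kernel $K$ shows that the right-hand side is bounded below, uniformly in $\bar\mu$, by $\frac{3(1-o(1))}{2\pi}\cdot\frac{2\log(m/r_m)}{m}$, the infimum being essentially attained by the near-uniform equilibrium measure of $K$ on $\{1, \ldots, m\}$. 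This yields $\ca(\C_m(1, r_m)) \le \frac{\pi m}{3\log(m/r_m)}(1+o(1))$.

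The main obstacle is the sharp lower bound on the 1D energy: a na\"ive Jensen inequality only gives $\Omega(1/m)$, missing the crucial logarithmic factor. Extracting the correct $\log(m/r_m)$ behavior requires either a Fourier/positive-definiteness analysis of $K$, or the explicit construction of the 1D equilibrium measure of the discrete interval $\{1,\ldots,m\}$ with its $(1/\sqrt{i(m-i)})$-type end-corrections, followed by verification that its energy matches that of the uniform measure up to $(1+o(1))$. End effects at $i\in\{1,m\}$, where the mid-cylinder axial potential naively drops by a factor of two, also require careful handling if one is to retain the precise prefactor $\pi/3$.
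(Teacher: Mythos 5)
Your lower bound is essentially the paper's argument in variational clothing: testing the energy with the uniform measure on $\C_m(\ell,r_m)$ is equivalent to the bound $\ca(X)\ge |X|/\max_{x\in X}\sum_{y\in X}G(x,y)$ of Lemma \ref{mat1}, and your cylinder integral is exactly the Riemann-sum estimate \eqref{eq:ub-GCn}; the role of $\ell\le r_m^b$, $b<2/3$, in killing the diagonal and near-diagonal terms is as you describe, and the constant comes out right. The upper bound is where you depart from the paper, and where the proposal has a genuine gap: everything hinges on the claim that for \emph{every} probability vector $\bar\mu$ on $\{1,\dots,m\}$ one has $\sum_{i,j}K(i-j)\bar\mu_i\bar\mu_j\ge (2-o(1))\log(m/r_m)/m$ with $K(u)=(u^2+4r_m^2)^{-1/2}$, and this -- the step that carries the constant $\pi/3$ -- is asserted, not proved (you acknowledge as much). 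The claim is true, but neither route you hint at delivers it as stated: a positive-definiteness/Fourier bound using only that $|\widehat{\bar\mu}|$ is bounded away from $0$ on a unit range of frequencies gives the correct order $\log(m/r_m)/m$ while losing a constant factor, and the $(i(m-i))^{-1/2}$ ansatz is the equilibrium density of the \emph{logarithmic} kernel; for $K$ (a Coulomb kernel truncated at scale $r_m$) the equilibrium measure is asymptotically uniform and the edge corrections affect the energy only at order $O(1/m)$, not $O(\log(m/r_m)/m)$. A workable way to close the step is a multiscale block Cauchy--Schwarz argument: for every $\delta$, partitioning $\{1,\dots,m\}$ into $1/\delta$ blocks gives $A(\delta m):=\sum_{|i-j|\le\delta m}\bar\mu_i\bar\mu_j\ge\delta$, and integrating $u\mapsto u^{-1}$ against $dA(u)$ over $u\in[r_m\log(m/r_m),m]$, with the boundary term at the lower cutoff absorbed into the near-diagonal energy (splitting on whether $A(u_0)/u_0$ is large), yields the constant $2$ uniformly over $\bar\mu$, atoms included. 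As written, though, the proposal proves the sharp lower bound but only the \emph{order} of the upper bound.

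It is worth noting how the paper avoids this issue entirely: Lemma \ref{mat1} also supplies the dual bound $\ca(X)\le |X|/\min_{x}\sum_{y}G(x,y)$, valid for multisets, so the upper bound reduces to a lower Riemann-sum estimate \eqref{eq:lb-GCn} on $\sum_y G(x,y)$ in the bulk $Q_1$, with the two ends $Q_2$ of the cylinder handled by Lemma \ref{mat2}; no equilibrium-measure (or 1D projection) analysis is needed, because the near-constancy of the potential of the uniform weighting is exactly what these Green's-sum estimates verify. That route also proves the stronger statement of Remark \ref{for-ubd-sec3} (the upper bound for any chain of overlapping balls of radius $r_m$), which is what Section \ref{subsec:limsup-3u} actually uses; your projection argument, once the 1D lemma is supplied, gives the lemma as stated but would need an extra comparison (spine distance versus Euclidean distance, sharp only at separations $\gg r_m$) to cover that more general geometry.
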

\begin{rem}\label{for-ubd-sec3} In the sequel we prove a stronger result, namely that the upper bound
in \eqref{eq:cyl-cap} holds as soon as $\C_m$ is contained in a union $\C_m^\star (r_m)$ of at most 
$m/r_m$ balls $\BB(z_i,r_m)$ of radius $r_m$ in $\Z^3$, of centers such that $|z_{i+1}-z_i| \le r_m$ 
for $1 \le i < m/r_m$ (where $\C_m(1,r)$ is merely one possible choice for 
$\C_m^\star(r)$).
\end{rem}

Indeed, in Section \ref{sec:m1} we will see that $\limsup$ of $\crn$ is roughly attained on the 
event $\{S^i_n \ge \psi(n)\}$ for $\psi(n)$ of Proposition \ref{m1+}, 
with $\R_n$ then having approximately the shape of such ${\C}_m$ for $m=\psi(n)$, 
and $r_m=c m/\log_2 n$, hence 
from Lemma \ref{cy1} we find that  
\begin{align*}
\crn \approx
\ca( {\C}_{m} )  \approx \frac{\pi}{3} m (\log (m/r_m))^{-1} \approx \frac{\pi}{3}
 \psi(n) (\log_3 n)^{-1} \,,
\end{align*}
which is precisely $h_3(n)$ of Theorem \ref{m1}.

We proceed with two lemmas relating the capacity of \abbr{srw} with its Green's function,  
\begin{align*}
G(x,y)=\sum_{i=0}^\infty P^x(S_i=y) \,.
\end{align*} 
To this end, partition $\Omega$ by the last time the walk visits $X=\{x_1 \ne x_2 \cdots  \ne x_j\}$, 
to get that
\begin{align}\label{gr1}
1=\sum_{\ell=1}^j G(x_i,x_\ell) P^{x_\ell}(\tau_X=\infty), \qquad \forall  1 \le i \le j \,.
\end{align} 
\begin{lem}\label{mat1}
For any set $X=\{x_1,\ldots ,x_j\}$, with $\{x_j\}$ not necessarily \blue{distinct}, 
\begin{align}\label{eq:grf}
\frac{j}{\max_{1\le \ell \le j} \{\sum_{i=1}^j G(x_i,x_\ell)\}}
\le \ca (X)
\le \frac{j}{\min_{1\le \ell \le j} \{\sum_{i=1}^j G(x_i,x_\ell)\}}.
\end{align} 
\end{lem}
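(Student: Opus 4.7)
The plan is to derive both directions of (\ref{eq:grf}) directly from identity (\ref{gr1}) by summing over the starting point and swapping the order of summation, so the lemma reduces to the trivial inequality $\min \le \text{weighted average} \le \max$.

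First I would treat the case in which the $x_\ell$ are pairwise distinct. Set $p_\ell := P^{x_\ell}(\tau_X = \infty)$ and $a_\ell := \sum_{i=1}^{j} G(x_i, x_\ell)$, so that by definition $\ca(X) = \sum_{\ell=1}^{j} p_\ell$. Summing (\ref{gr1}) over $i \in \{1,\ldots,j\}$ and exchanging the order of summation gives
\begin{equation*}
j = \sum_{i=1}^{j} 1 = \sum_{i=1}^{j}\sum_{\ell=1}^{j} G(x_i,x_\ell)\, p_\ell = \sum_{\ell=1}^{j} a_\ell\, p_\ell .
\end{equation*}
Since $p_\ell \ge 0$, this weighted average satisfies
\begin{equation*}
(\min_{\ell} a_\ell)\, \ca(X) \;\le\; \sum_{\ell=1}^{j} a_\ell\, p_\ell = j \;\le\; (\max_{\ell} a_\ell)\, \ca(X),
\end{equation*}
and rearranging yields the two bounds in (\ref{eq:grf}).

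To accommodate the "not necessarily disjoint" enumeration, collapse the multiset $\{x_1,\ldots,x_j\}$ to its $k$ distinct values $y_1,\ldots,y_k$ with multiplicities $m_1,\ldots,m_k$ summing to $j$. Apply the (valid) form of (\ref{gr1}) to the \emph{distinct} set with hitting probabilities $q_t := P^{y_t}(\tau_X=\infty)$, multiply by $m_t$, and sum: this gives $j = \sum_{t=1}^{k} q_t\, b_t$ with $b_t := \sum_{s=1}^{k} m_s\, G(y_s, y_t) = \sum_{i=1}^{j} G(x_i, y_t)$. Since $\{b_t\}_{t=1}^{k}$ is precisely the set of distinct values taken by $\{a_\ell\}_{\ell=1}^{j}$, the extrema agree ($\max_\ell a_\ell = \max_t b_t$ and similarly for $\min$), and $\ca(X) = \sum_{t=1}^{k} q_t$, so the same sandwich argument produces (\ref{eq:grf}).

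There is no real obstacle beyond the bookkeeping in the second paragraph; the content of the lemma is genuinely just the extraction of a weighted average from (\ref{gr1}). The only step that requires a moment of care is verifying that the $\max$ and $\min$ of $a_\ell$ over the (possibly repeated) enumeration equal those of $b_t$ over the distinct points — which is automatic because repeating an index merely repeats a value in the collection.
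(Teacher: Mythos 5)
Your proof is correct and takes essentially the same route as the paper: sum the last-exit identity (\ref{gr1}), applied to the distinct points and counted with multiplicity, over the enumeration, exchange the order of summation to get $j=\sum_{\ell} a_\ell\, p_\ell$ (resp. $j=\sum_t q_t b_t$ in the multiset case), and sandwich this nonnegatively weighted sum between $\min_\ell a_\ell \cdot \ca(X)$ and $\max_\ell a_\ell \cdot \ca(X)$. The paper's bookkeeping for repeated points, via $q_v=(m_v)^{-1}P^{\hat{x}_v}(\tau_{\hat{X}}=\infty)$ indexed over $\ell\le j$, is exactly equivalent to your multiplicity-weighted collapse to distinct values.
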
 
\begin{proof} The set $X$ of size $|X|=k \le j$ consists \abbr{wlog} of \blue{distinct} points 
$\hat{X}=\{\hat{x}_1 \ne \hat{x}_2 \cdots \ne \hat{x}_k\}$, where $\hat{x}_v$ 
appears $m_v \ge 1$ times in $X$ (and $\sum_{v \le k} m_v = j$). 
\blue{Though \eqref{eq:grf} follows from the characterization of $\ca(\hat{X})$ as in \cite[Lemma 2.2(i)]{JO73}, we proceed instead with a direct, short and elementary proof of these bounds. Specifically, setting}
$v(\ell)$ for the index of $x_\ell$ in $\hat{X}$ and 
$q_v := (m_v)^{-1} P^{\hat{x}_v}(\tau_{\hat{X}}=\infty)$, we see that 
\begin{equation}\label{eq:cap-id}
\ca(X)=\ca(\hat{X}) = \sum_{v=1}^k P^{\hat{x}_v} (\tau_{\hat{X}}=\infty) = \sum_{\ell=1}^j q_{v(\ell)} 
\end{equation}
and moreover, summing \eqref{gr1} over $i \le j$, we get that
\begin{align}\label{eq:jhat}
j = \sum_{i=1}^j \sum_{v=1}^k G(x_i,\hat{x}_v) P^{\hat{x}_v}(\tau_{\hat{X}}=\infty) = 
\sum_{\ell=1}^j q_{v(\ell)} \sum_{i=1}^j G(x_i,x_\ell) \,.
\end{align} 
The bounds of \eqref{eq:grf} are an immediate consequence of \eqref{eq:cap-id} and \eqref{eq:jhat}.
\end{proof}

\begin{lem}\label{mat2}
For $Z_1=\{x_1,\ldots ,x_{j_1}\}$, $Z_2=\{x_{j_1+1},\ldots ,x_{j_1+j_2}\}$ with $\{x_i\}$  
not necessarily \blue{distinct},
\begin{align*}
\ca (Z_1 \cup Z_2) \le\ca(Z_2) + 
\frac{j_1+j_2}{\min_{x\in Z_1 \setminus Z_2} \{ \sum_{i=1}^{j_1+j_2} G(x_i,x) \} } . 
\end{align*} 
\end{lem}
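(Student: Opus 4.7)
The plan is to follow the structure of the proof of Lemma~\ref{mat1}, splitting the capacity of $Z_1\cup Z_2$ according to whether the last-visit point lies in $Z_2$ or in $Z_1\setminus Z_2$, and then handling the two pieces by completely different devices: monotonicity of hitting times for the first, and the identity \eqref{eq:jhat} restricted to a subset of indices for the second.

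Concretely, I would first let $\hat X$ be the set of distinct points among $\{x_1,\ldots,x_{j_1+j_2}\}$, with multiplicities $m_v$, and set $q_v:=(m_v)^{-1}P^{\hat x_v}(\tau_{\hat X}=\infty)$ as in the proof of Lemma~\ref{mat1}. Then
\begin{align*}
\ca(Z_1\cup Z_2)=\ca(\hat X)=\sum_{v:\hat x_v\in Z_2}P^{\hat x_v}(\tau_{\hat X}=\infty)+\sum_{v:\hat x_v\in Z_1\setminus Z_2}P^{\hat x_v}(\tau_{\hat X}=\infty).
\end{align*}
For the first sum, since $\hat X\supseteq Z_2$ one has $\tau_{\hat X}\le \tau_{Z_2}$ pointwise, so $P^{\hat x_v}(\tau_{\hat X}=\infty)\le P^{\hat x_v}(\tau_{Z_2}=\infty)$, and summing over the distinct points of $Z_2$ yields $\ca(Z_2)$ by definition.

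For the second sum, I rewrite it in terms of the indexed list: with $I_1:=\{\ell:\,x_\ell\in Z_1\setminus Z_2\}$, one has $\sum_{v:\hat x_v\in Z_1\setminus Z_2} P^{\hat x_v}(\tau_{\hat X}=\infty)=\sum_{v:\hat x_v\in Z_1\setminus Z_2} m_v q_v=\sum_{\ell\in I_1}q_{v(\ell)}$, because points of $Z_1\setminus Z_2$ collect all their multiplicity from indices $\ell\le j_1$, all of which lie in $I_1$. The identity \eqref{eq:jhat} applied to $X=\{x_1,\ldots,x_{j_1+j_2}\}$ reads $j_1+j_2=\sum_{\ell=1}^{j_1+j_2}q_{v(\ell)}S_\ell$ with $S_\ell:=\sum_{i=1}^{j_1+j_2}G(x_i,x_\ell)\ge 0$. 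Dropping the (non-negative) terms with $\ell\notin I_1$ and using that $S_\ell\ge \min_{x\in Z_1\setminus Z_2}\sum_i G(x_i,x)$ for $\ell\in I_1$, I obtain
\begin{align*}
\sum_{\ell\in I_1} q_{v(\ell)}\;\le\;\frac{j_1+j_2}{\min_{x\in Z_1\setminus Z_2}\sum_{i=1}^{j_1+j_2}G(x_i,x)},
\end{align*}
and combining with the bound on the first sum finishes the proof.

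The only subtle point (and the one place I would be most careful) is bookkeeping of multiplicities when $Z_1$ and $Z_2$ overlap: the desired bound features $j_1+j_2$ rather than $|\hat X|$, so it is essential that the identity \eqref{eq:jhat} be applied to the full indexed list of length $j_1+j_2$, and that the restriction to $I_1$ correctly groups all contributions of each $\hat x_v\in Z_1\setminus Z_2$. Once this accounting is done, both the monotonicity step and the ``drop non-negative terms'' step are immediate, so no new estimates beyond those already present in Lemma~\ref{mat1} are required.
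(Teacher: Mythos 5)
Your proposal is correct and follows essentially the same route as the paper: split $\ca(Z_1\cup Z_2)$ according to whether the escaping point lies in $Z_2$ (bounded by $\ca(Z_2)$ via $\tau_{Z_1\cup Z_2}\le\tau_{Z_2}$) or in $Z_1\setminus Z_2$, and bound the latter by applying the weighted identity \eqref{eq:jhat} to the full indexed list of length $j_1+j_2$, dropping the non-negative terms outside $Z_1\setminus Z_2$. Your multiplicity bookkeeping (all copies of a point of $Z_1\setminus Z_2$ come from indices in $I_1$) is exactly the accounting the paper performs with the indicator $1_{\{\hat x_{v(\ell)}\in Z_1\setminus Z_2\}}$.
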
 

\begin{proof} Since $\tau_{Z_1 \cup Z_2} \le \tau_{ Z_2}$, it follows that 
\begin{align}
\ca (Z_1 \cup Z_2) \le \ca (Z_2) + \sum_{x\in Z_1 \setminus Z_2} P^x (\tau_{Z_1 \cup Z_2}=\infty) \,.
\label{for1}
\end{align} 
For $\hat{X}$ enumerating the \emph{\blue{distinct}} points in $Z_1 \cup Z_2$, 
$v(\ell)$, $q_v$ as in 
Lemma \ref{mat1}, we have that 
 \begin{align*}
 \sum_{x\in Z_1 \setminus Z_2} P^x (\tau_{Z_1 \cup Z_2}=\infty) &= \sum_{\ell=1}^{j_1+j_2} q_{v(\ell)} 
 1_{\{\hat{x}_{v(\ell)} \in Z_1 \setminus Z_2\}} \,, \\
j_1+j_2 &= \sum_{\ell=1}^{j_1+j_2} q_{v(\ell)} \sum_{i=1}^{j_1+j_2} G(x_i,x_\ell) \,.
\end{align*}
Combining these identities with \eqref{for1} yields the stated upper bound.
\end{proof}
\begin{rem}\label{green*} In particular, applying  Lemma \ref{mat2} for  
\[
Z_1=\bigcup_{i \in (j, J-j]} \hat{Z}_i, \qquad Z_2=\bigcup_{i \in [1,j] \cup (J-j, J]} \hat{Z}_i \,,
\]
we have that for any $\hat{Z}_i \subset \Z^d$, $2 j<J$,  
\begin{align*}
\ca (Z_1 \cup Z_2) \le \ca(Z_2) + 
 \frac{\sum_{i=1}^J |\hat{Z}_i|}{\min_{x\in Z_1}\sum_{i=1}^J\sum_{y \in \hat{Z}_i}  G(x,y)} . 
\end{align*} 
\end{rem} 

\begin{proof}[Proof of Lemma \ref{cy1}] 
By the monotonicity of $A \mapsto \ca(A)$, it 
suffices to provide a uniform in $\ell \le r_m^b$ lower bound on $\ca(\C_m(\ell,r_m))$ 
and a matching upper bound on $\ca(C_m^\star(r_m))$, \blue{valid for 
any union $\C_m^\star (r_m)$ of at most 
$m/r_m$ balls $\BB(z_i,r_m)$ of radius $r_m$ in $\Z^3$ and 
centers such that $|z_{i+1}-z_i| \le r_m$ for $1 \le i < m/r_m$.} With 
$|{\C}_m(\ell,r_m)| = (1+o(1))\pi m r_m^2 \ell^{-3}$,  we get such a lower bound 
from Lemma \ref{mat1}, upon showing that for \abbr{srw} on $\Z^3$, 
\begin{equation}\label{eq:ub-GCn}
\sum_{y \in {\C}_m(\ell,r_m)} G(x,y) \le 3 (1+o(1))  r_m^2 \ell^{-3} \log (m/r_m) \,, \qquad \forall \ell \le r_m^b, \;\; \forall 
x \in \C_m(\ell,r_m) \,.
\end{equation}
\blue{Fixing $b < 2/3$, since} the right-side of \eqref{eq:ub-GCn} diverges in $m$, 
\blue{uniformly in} $\ell \le r_m^b$, \blue{we can ignore any bounded contribution 
to its left-side. In particular,}
with $m/r_m \uparrow \infty$ and $G(x,y)$ bounded, it suffices to sum in \eqref{eq:ub-GCn} 
\blue{only over $y \in \C_m(\ell,r_m)$} with 
$|x-y| \ge r_m^{2/3} \gg \ell$ and use the asymptotics
\begin{align}\label{grf}
G(x,y)=\frac{3+o(1)}{2\pi} |x-y|^{-1}
\end{align} 
(for example, see \cite[Thm. 1.5.4]{LA91}). Setting $u_m=m$, we have
for any $v_m \uparrow \infty$ and $r \in [r_m^{2/3}, v_m r_m]$, at most $C r^2 \ell^{-3}$
points $y \in \C_m(\ell,r_m)$ with $|x-y| \in [r,r+1]$, while for each $r \in [v_m r_m, u_m]$ 
there are at most $(2 \pi +o(1)) r_m^2 \ell^{-3}$ such points in $\C_m(\ell,r_m)$. Thus, 
taking $v_m^2 \ll \log (m/r_m)$ yields
\begin{align}
\sum_{y \in {\C}_m\red{(\ell,r_m)}} G (x,y) & \le \frac{3+o(1)}{2\pi \ell^3} \Big[
 C \int_{r_m^{2/3}}^{v_m r_m} r dr + 2 \pi r_m^2 \int_{v_m r_m}^{u_m} r^{-1} d r \Big ] \nonumber \\
& =  (3+o(1)) r_m^2 \ell^{-3} \log (u_m/(r_m v_m)) \,, 
\label{eq:basic-bd}
\end{align}
from which \eqref{eq:ub-GCn} immediately follows. Turning to upper bound on $\ca(\C_m^\star(r_m))$, 
take now $u_m := (m/r_m)^{1-\epsilon_m}$ and $v_m := (m/r_m)^{\epsilon_m} \uparrow \infty$
for some $\epsilon_m \to 0$, splitting 
$\C_m^\star(r_m)$ to $Q_1 \cup Q_2$, where 
\begin{align*}
Q_1:=\bigcup_{i \in (u_m,(m/r_m)- u_m)} \BB(z_i,r_m) , \qquad 
Q_2 := \bigcup_{i \notin (u_m,(m/r_m)-u_m)} \BB(z_i,r_m) .
\end{align*}
Note that $\C_m^\star(r_m)$ has at most $(4\pi/3 + o(1)) r_m^2 m$, possibly overlapping, points. 
Thus, combining Lemma \ref{mat2} with the upper bound of Lemma \ref{mat1}, we get  
the upper bound of \eqref{eq:cyl-cap}, once we show that for some $\delta_m \to 0$,  
\begin{align}\label{eq:lb-GCn}
\sum_{y \in {\C}_m^\star(r_m)} G(x,y) & \ge (4+\delta_m)  r_m^2 \log (m/r_m) \,, \qquad \forall x \in Q_1 \,,\\
\label{eq:lb-Q2}
\sum_{y \in Q_2} G(x,y) &\ge \frac{|Q_2|}{m \, \delta_m} \log (m/r_m) \,,
\qquad \qquad\;  \forall x \in Q_2 \,.
\end{align}
Fixing $x \in \BB(z_i,r_m) \subset Q_1$, consider only the contribution to the \abbr{lhs} 
of \eqref{eq:lb-GCn} from all points $y \in B(z_j,r_m)$ with $|j-i| \in [v_m,u_m]$.
For such a pair $|y-x| \le (|j-i|+3) r_m$, hence by \eqref{grf},
\[
G(x,y) \ge \frac{3+o(1)}{2\pi} r_m^{-1} |j-i|^{-1} \,,
\]
resulting with 
\[
\sum_{y \in {\C}_m^\star(r_m)} G(x,y) \ge 2 \frac{3+o(1)}{2\pi} \frac{|\BB(0,r_m)|}{r_m} 
\sum_{j=v_m}^{u_m} j^{-1} = (4+o(1)) r_m^2 \log (u_m/v_m) \,,
\]
which for our choices of $u_m$ and $v_m$ is as stated in \eqref{eq:lb-GCn} (for some $\delta_m \to 0$,
uniformly over $x \in Q_1$).
Further, $Q_2$ consists of two sets with an equal number of elements, each of
diameter at most $(1+o(1)) u_m r_m$. Thus, we get by \eqref{grf} that for some $c>0$, 
\begin{align*}
 \sum_{y\in  Q_2}  G(x,y) \ge  & c \, |Q_2| \, (u_m r_m)^{-1} \,, \qquad \forall x \in Q_2 \,,
\end{align*}
and \eqref{eq:lb-Q2} follows upon choosing $\epsilon_m \to 0$ slow enough so
that $(m/r_m)^{\epsilon_m} \gg \log (m/r_m)$.
\end{proof}



\section{\abbr{LIL} for \abbr{srw} on $\Z^3$: Proof of  Theorem \ref{m1}}\label{sec:m1}

To ease the presentation we omit hereafter the integer-part symbol $\lceil \cdot \rceil$ and
divide the section to four parts, establishing the lower and then upper bounds, first for the 
$\limsup$-\abbr{LIL} of \eqref{eq:lil-3d} and then for the $\liminf$-\abbr{LIL} of \eqref{eq:lil-3d}.
\blue{Our proofs for the $\limsup$-\abbr{LIL} are \emph{discrete} in nature, relying on Remark 
\ref{for-ubd-sec3}, Lemma \ref{mat1} and the direct evaluation of certain sharp tail probabilities for \abbr{srw}. In contrast, we prove the $\liminf$-\abbr{LIL} by first replacing $\crn$ by the corresponding
quantity about the Brownian capacity of the range of the 3D Brownian motion.}

\subsection{The lower bound in the limsup-\abbr{LIL}}\label{ioo1}
Recall 
\begin{align*}
\psi(t)
:=\sqrt{(2/3) t \log_2 t} \,, \qquad h_3(t):= \frac{\pi}{3} \psi(t) (\log_3 t)^{-1} \,,
\end{align*}
of Proposition \ref{m1+} and Theorem \ref{m1}, respectively, and for the \abbr{srw}  $(S_m)$ on $\Z^3$ 
\blue{set} 
\begin{align*}
A_t := \{\, S^1_t  \ge \psi(t) \, \} \,, \qquad
V_{I} :=1_{A_t} \sum_{\ell \in I}  G(0,S_\ell) \,, \qquad 
\; I \subseteq [0,t] \cap \Z \,.
\end{align*} 
\blue{The lower bound in our $\limsup$-\abbr{LIL} is attained via partial sums on disjoint intervals 
$I_n$ of suitably growing length $t_n$ and controllable Green function values (utilizing the 
\abbr{lhs} of \eqref{eq:grf}). The key for this is} our next lemma
\blue{(whose proof is deferred to the end of this sub-section), 
showing that $A_{t_n}$ yields the appropriate bound on the sum $V_{[0,t_n]}$ 
of Green function values.}
\begin{lem}\label{gn2+gn3}
Fixing $\delta \in (0,1)$, for $\gamma_t := t (\log_2 t)^{-1} (\log_3 t)^{3/2}$
and some $\zeta_t \to 0$ when $t \to \infty$,
\begin{align}
\label{eq:gn3}
P\Big(  V_{[0,\gamma_t]}
\ge \frac{\delta t}{h_3(t)} \Big)  & \le \zeta_t \, P(A_t)\,, \\
\label{eq:gn2}
P\Big( V_{(\gamma_t,t]} \ge \frac{(1+ 2 \delta) t}{2 h_3(t)}  \Big)  &\le \zeta_t \, P(A_t)\,.
\end{align} 
\end{lem}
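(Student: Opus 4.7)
The plan is to reduce both bounds to moment estimates under an exponentially tilted probability $\tilde P$ defined by
\[
\frac{d\tilde P}{dP} = \exp\!\bigl(\lambda_t S^1_t - t\phi(\lambda_t)\bigr),
\]
where $\phi(\lambda) = \log E[e^{\lambda X^1}]$ is the log-moment generating function of one step of $S^1$ and $\lambda_t \sim 3\psi(t)/t$ is chosen so that $\tilde E[S^1_t] = \psi(t)$. Under $\tilde P$ the walk has drift $\psi(t)/t$ in coordinate $1$, and $(S^1_t-\psi(t))/\sqrt{t/3}$ is asymptotically standard Gaussian; in particular $\lambda_t\sqrt{t/3}=x(t)$. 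Slicing $A_t$ by the intervals $I_j:=[\psi(t)+j/\lambda_t,\psi(t)+(j+1)/\lambda_t]$, $j\ge0$, and using the sandwich $e^{-(j+1)}\le e^{-\lambda_t(S^1_t-\psi(t))}\le e^{-j}$ on $\{S^1_t\in I_j\}$ yields the reduction
\[
\frac{P(V_I \ge v,\,A_t)}{P(A_t)} \;\le\; e\, \sup_{j \ge 0} \tilde P\bigl(V_I \ge v \,\big|\, S^1_t \in I_j\bigr).
\]
Under $\tilde P(\cdot\mid S^1_t\in I_j)$, $S^1$ behaves (to leading order) like a Brownian bridge from $0$ to $\psi(t)+j/\lambda_t$, while the transverse coordinates retain their unconditional covariance; by monotonicity in $j$ it suffices to treat $j=0$.

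The main step is then a Gaussian estimate. Since $\tilde E[S^1_\ell\mid S^1_t\in I_0]\approx \mu_\ell:=\psi(t)\ell/t$ and $S^2_\ell,S^3_\ell$ have variance $\sim\ell/3$, combining $G(0,y)=(3+o(1))/(2\pi|y|)$ with $E[1/|X|]=(1+o(1))/|m|$ for a $3$-dimensional Gaussian $X$ whose mean $m$ satisfies $|m|^2\gg\mathrm{Var}(X)$ gives $\tilde E[G(0,S_\ell)\mid S^1_t\in I_0]\lesssim 1/\sqrt{\ell}$ in the diffusion regime $\ell\le t/\log_2 t$ and $\tilde E[G(0,S_\ell)\mid S^1_t\in I_0]=(3+o(1))/(2\pi\mu_\ell)$ in the drift regime $\ell>t/\log_2 t$. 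Summing yields $\tilde E[V_{[0,\gamma_t]}\mid S^1_t\in I_0]=O(\sqrt{t/\log_2 t}\,\log_4 t)$ and $\tilde E[V_{(\gamma_t,t]}\mid S^1_t\in I_0]=(1+o(1))\,t/(2h_3(t))$, the latter matching the threshold in \eqref{eq:gn2} up to the factor $1+2\delta$. Dividing the first by $\delta t/h_3(t)\asymp\sqrt{t/\log_2 t}\,\log_3 t$ and applying conditional Markov delivers $\zeta_t=O(\log_4 t/\log_3 t)\to 0$, which establishes \eqref{eq:gn3}.

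For \eqref{eq:gn2} conditional Markov is too crude, so I turn to conditional Chebyshev, which requires controlling $\mathrm{Var}(V_{(\gamma_t,t]}\mid S^1_t\in I_0)$. The strategy is to linearize $1/|S_\ell|$ around $1/\mu_\ell$ in the drift regime,
\[
\tfrac{1}{|S_\ell|} \;\approx\; \tfrac{1}{\mu_\ell} - \tfrac{S^1_\ell-\mu_\ell}{\mu_\ell^2} + O\!\Bigl(\tfrac{|S_\ell-\mu_\ell e_1|^2}{\mu_\ell^3}\Bigr),
\]
and to use the bridge covariance $\mathrm{Cov}(S^1_\ell,S^1_{\ell'}\mid S^1_t\in I_0)\approx \min(\ell,\ell')(1-\max(\ell,\ell')/t)/3$; a direct integral computation over $\gamma_t<\ell\le\ell'\le t$ then gives
\[
\mathrm{Var}\bigl(V_{(\gamma_t,t]}\,\big|\, S^1_t\in I_0\bigr) \;\lesssim\; \tfrac{t^4}{\psi(t)^4\,\gamma_t} \;\asymp\; \tfrac{t}{\log_2 t\,(\log_3 t)^{3/2}},
\]
which is smaller than the squared threshold $(t/h_3(t))^2\asymp t(\log_3 t)^2/\log_2 t$ by a factor $(\log_3 t)^{-7/2}$, so Chebyshev yields $\tilde P(V_{(\gamma_t,t]}\ge (1+2\delta)t/(2h_3(t))\mid S^1_t\in I_0)=O(1/(\log_3 t)^{7/2})\to 0$, furnishing the desired $\zeta_t$ for \eqref{eq:gn2}. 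I expect the main obstacle to be justifying the linearization uniformly in $\ell>\gamma_t$ and in the conditioning: I will truncate $1/|S_\ell|$ at a fixed multiple of $1/\mu_\ell$ and separately bound the complementary contribution to the conditional second moment via a moderate-deviation estimate, exploiting that $\mu_\ell/\sqrt{\ell}\gtrsim(\log_3 t)^{3/4}$ throughout $\ell>\gamma_t$.
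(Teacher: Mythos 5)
Your plan is viable in outline and takes a genuinely different route from the paper. The paper proves \eqref{eq:gn3} by bounding the \emph{unconditional} expectation $E[V_{[0,\gamma_t]}]\le C\sqrt{\gamma_t}\,P(A_t)$ -- the correlation between $\{|S_\ell|\text{ small}\}$ and $A_t$ being absorbed by the tail comparison \eqref{sd} -- followed by plain Markov (giving $\zeta_t\asymp(\log_3 t)^{-1/4}$); and it proves \eqref{eq:gn2} with no second-moment computation at all: by \eqref{eq:gn2-prep} of Lemma \ref{gn1-1+gn1-2}, conditionally on $A_t$ the first coordinate stays above the ray $\psi(t)\ell/((1+\delta)t)$ throughout $(\gamma_t,t]$ except with conditional probability $o(1)$, and on that event the bound on $V_{(\gamma_t,t]}$ is deterministic, via $G(0,S_\ell)\le\frac{3+o(1)}{2\pi}(|S^1_\ell|+1)^{-1}$. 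Your version pushes everything onto conditional (bridge) moment estimates: the orders you compute are right (conditional mean $(1+o(1))\,t/(2h_3(t))$; conditional variance $O\big(t/(\log_2 t\,(\log_3 t)^{3/2})\big)$ against a squared gap of order $t(\log_3 t)^2/\log_2 t$), so Chebyshev closes the argument, and for \eqref{eq:gn3} you even get a sharper $\zeta_t\asymp\log_4 t/\log_3 t$. The price is that the one-dimensional moderate-deviation work the paper localizes in Lemma \ref{gn1-1+gn1-2} (via Petrov) reappears in your scheme as conditional local-CLT/moderate-deviation estimates for the walk pinned at an endpoint of size $\asymp\sqrt{t\log_2 t}$, together with your truncation of $1/|S_\ell|$ to control rare returns near the origin; that is where the real work would be.

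The one step I would not accept as written is ``by monotonicity in $j$ it suffices to treat $j=0$''. The functional $\sum_\ell G(0,S_\ell)$ depends on all three coordinates, and for the simple random walk on $\Z^3$ conditioning on a larger $S^1_t$ also changes the law of the transverse coordinates (the coordinates compete for the $t$ steps), typically shrinking them; since $|S_\ell|^{-1}$ increases when the transverse part shrinks, no pointwise monotone coupling in the endpoint is evident, and stochastic monotonicity of the full three-dimensional functional is not obvious (and may fail). Fortunately you do not need it: your estimates, stated uniformly over endpoints $x\ge\psi(t)$, give the required supremum bound directly, because the drift $\mu_\ell=x\ell/t$ only increases with $x$, which only improves both the conditional mean and the variance bound -- so the fix is to prove the moment bounds uniformly in $x$ rather than invoke monotonicity. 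Relatedly, the reduction constant ``$e$'' should carry the ratio of interval probabilities under the tilt (controlled by a local CLT), or you can drop the tilt entirely: conditioning on the exact endpoint makes the reweighting constant, so $P(\{V_I\ge v\}\cap A_t)/P(A_t)\le\sup_{x\ge\psi(t)}P\big(\sum_{\ell\in I}G(0,S_\ell)\ge v\,\big|\,S^1_t=x\big)$ holds trivially, which simplifies your write-up without changing its substance.
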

\blue{Indeed, from \eqref{eq:gn3}-\eqref{eq:gn2} we see that for} 
any $\delta>0$, there exists $\zeta_t \to 0$ as $t \to \infty$, such that 
\begin{equation}\label{eq:34+35}
P \,  (  A^\star_t  ) \le 2 \zeta_t P(A_t) \,, \qquad 
A^\star_t:= \Big\{ \blue{V_{[0,t]}} \ge \frac{(1+ 4 \delta)}{2} \frac{t}{h_3(t)} \Big\} \,.
\end{equation}
\blue{Proceeding to deduce the lower bound in the limsup-\abbr{LIL} out of \eqref{eq:34+35}, given} 
$\epsilon>0$, we choose $q>1$ large and $\delta>0$ small so for $t_n:=q^n-q^{n-1}$ 
and all large $n$, 
\begin{equation}\label{eq:def-q-delta}
\frac{1-\delta}{1+ 4 \delta} h_3(t_n) \ge (1-\epsilon) h_3(\blue{q^n}) \,.
\end{equation}
We then partition $\Z_+$ to disjoint intervals $I_n:=(q^{n-1},q^n] \cap \Z$ of length \blue{$t_n$} and set the events 
\begin{align*}
\hat{A}_n  :=\{  S^1_{q^n}-S^1_{q^{n-1}} \ge \psi(t_n) \}, \qquad 
\hat{H}_n(i) := \Big\{ \hat{V}_n(i) \ge (1+4 \delta) \frac{t_n}{h_3(t_n)} \Big\} \,, 
\end{align*}
where
\[
\hat{V}_n(i) := \sum_{\ell \in I_n}  G(S_i,S_\ell) \,.
\]
Setting 
\[
H_t(i) := \Big\{ \sum_{\ell=1}^t G(S_i,S_\ell) \ge (1+4 \delta) \frac{t}{h_3(t)} \Big\} \,, \qquad i \in [1,t] \,,
\]
note that by the stationarity of the \abbr{srw} increments,  
$(\hat{A}_n,\hat{H}_n(q^{n-1}+i)) \stackrel{d}{=} (A_{t_n},H_{t_n}(i))$. Further, the event $A_{t}$ 
is invariant to any permutation of the \abbr{srw} increments $\{X_j, j \le t\}$,  whereas given
$A_t$ the event $H_t(i)$ depends only on $\{S_\ell-S_i, \ell \in [1,t]\}$. Further, the 
permuted increments $\hat{X}_j = X_{(i+j)\!\!\!\mod\!\!(t)}$ result with $\hat{S}_\ell=S_{\ell+i}-S_i$
for all $\ell \in [1,t-i]$, whereas 
$\hat{X}_j = X_{(i+1-j)\!\!\!\mod\!\!(t)}$ result
with $\hat{S}_\ell=S_i-S_{i-\ell}$ for all $\ell \in [1,i]$. Since $G(x,y)=G(-x,-y) \ge 0$, it thus follows that
conditional on $A_t$ the random sum in each of the events $H_t(i)$ is stochastically dominated by twice 
the random sum in the event $A^\star_t$ \blue{of \eqref{eq:34+35}.} 
Consequently, \eqref{eq:34+35} yields that 
\begin{equation}\label{eq:lem31}
\max_{i \in I_n} \{ P \,  ( \, \hat{H}_n  (i) \, \big| \, \hat{A}_n ) \} \le \blue{4} \zeta_{t_n} \to 0 \,.
\end{equation} 
Next, consider the independent events $G_n  :=\big\{ |\Lambda_n| \ge (1-\delta) t_n \big\}$, 
where $\Lambda_n$ is the subset of all those $i \in I_n$ for which $\hat{H}_n(i)$ does not hold.
From Markov's inequality and \eqref{eq:lem31} it follows that 
\begin{equation}\label{eq:lem31-app}
P(G_n^c \,|\, \hat{A}_n) = P(|I_n|-|\Lambda_n| \ge \delta t_n \, | \, \hat{A}_n) 
\le \frac{1}{\delta t_n} \sum_{i \in I_n} 
P \,  \big( \, \hat{H}_n(i) \, \big| \, \hat{A}_n \big) \le 
\frac{\blue{4} \zeta_{t_n}}{\delta} \to 0 \,.
\end{equation}
Setting $x(t):= \sqrt{2 \log_2 t}$, note that  
$(S^1_m)$ is the partial sum of $\{-1,0,1\}$-valued, zero-mean, i.i.d. variables of variance $1/3$. 
By the asymptotic normality of the moderate deviations for such partial sums (see 
\cite[Thm. VIII.2.1]{Petrov}), we have that \blue{for some $o_{\sf t}(1) \to 0$ as ${\sf t} \to \infty$, 
uniformly over ${\sf x}/{\sf t}^{1/6}$ small,
\begin{equation}\label{eq:lclt}
P ( ({\sf t}/3)^{-1/2} \, S_{\sf t}^1 \ge {\sf x} \Big) = (1+o_{\sf t}(1)) \overline{\Phi} ({\sf x}) \,, \qquad
\overline{\Phi}({\sf x}) := \int_{\sf x}^\infty \frac{e^{-u^2/2}}{\sqrt{2\pi}} du \,.
\end{equation}
In particular,}
\begin{align*}
P(\hat{A}_n) = P(A_{t_n})  &= P \Big( (t_n/3)^{-1/2} \, S_{t_n}^1 \ge x(t_n) \Big) \nonumber \\ & 
= (1+o(1)) \blue{\overline{\Phi}}(x(t_n))  \ge c_1 x(t_n)^{-1} e^{-x(t_n)^2/2} \ge  \frac{c_2}{n \sqrt{\log n}}\,,
\end{align*}
for some positive $c_1$ and $c_2=c_2(q)$. Hence, by \eqref{eq:lem31-app}, for all $n$ large enough,
\begin{align*}
P(G_n) \ge P(\hat{A}_n \cap G_n) \ge \frac{1}{2} P(\hat{A}_n) \ge \frac{c_2}{2 n \sqrt{\log n}}. 
\end{align*} 
Having $\{G_n\}$ independent with $\sum_n P(G_n) = \infty$, we deduce 
by the second Borel-Cantelli lemma that a.s. the events $G_n$ hold for infinitely many values of $n$.
Since 
\begin{align*}
\hat{\R}_{q^n} 
:=\{S_i\}_{i \in \Lambda_n} \subseteq \R_{q^n} \,,
\end{align*} 
we have by the monotonicity of $A \mapsto \ca(A)$, the non-negativity of $G(x,y)$, Lemma \ref{mat1} 
and the definition of $\Lambda_n \subseteq I_n$, that 
\begin{align*}
R_{q^n} \ge \ca (\hat{\R}_{q^n} )
&\ge \frac{|\Lambda_n|}{ \max_{i \in \Lambda_n} \{ \hat{V}_n(i) \} }  
\ge \frac{|\Lambda_n| h_3(t_n)}{(1+ 4 \delta) t_n} \,.
\end{align*}  
Consequently, in view of \eqref{eq:def-q-delta} we have on the event $G_n$ that 
\[
R_{q^n} \ge \frac{1-\delta}{1+ 4 \delta} h_3(t_n) \ge (1-\epsilon) h_3(\red{q^n}) \,,
\]  
which since $G_n$ holds infinity often, \blue{yields} the lower bound in the $\limsup$-\abbr{LIL} 
(along the sub-sequence $q^n$, and with $\R_{q^n} \supseteq \hat{\R}_{q^n}$ of roughly 
the shape of $\C_{\psi(t_n)}$ of Lemma \ref{cy1}).

\blue{Turning to the task of proving Lemma \ref{gn2+gn3}, we give two}
sharp tail estimates for the path of the \emph{one-dimensional walk} 
$(S_m^1)$, \blue{that we will use later for} 
proving \eqref{eq:gn3} and \eqref{eq:gn2}, respectively.
\begin{lem}\label{gn1-1+gn1-2}
Fixing $\delta \in (0,1)$, for some $C<\infty$ and all $t$ large enough,
\begin{align}\label{sd}
\sup_{\ell \le \gamma_t} \big\{ P(S_{t-\ell}^1 \ge \psi(t)-\sqrt{\ell} ) \big\} &\le C P(A_t) \,, \\
\label{eq:gn2-prep}
\blue{P\big(L^c_t | A_t )} & \le 4 \zeta_t \to 0 \,, \qquad 
L_t := \bigcap_{\ell \in (\gamma_t, t]} \Big\{ S_\ell^1 \ge \frac{\psi(t) \ell}{(1+\delta) t} \Big\} \,.
\end{align}
\end{lem}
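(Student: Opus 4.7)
Bound \eqref{sd} follows directly from the moderate deviations asymptotic \cite[Thm.~VIII.2.1]{Petrov} for the iid $\{-1,0,1\}$-valued sum $(S^1_m)$ of variance $1/3$. Since all normalized thresholds here are of order $\sqrt{\log_2 t} = o(t^{1/6})$, we have, uniformly over $\ell \in [0,\gamma_t]$,
\[
P(S^1_{t-\ell} \ge \psi(t) - \sqrt{\ell}) \;=\; (1+o(1))\,\overline{\Phi}(y_{t,\ell}),\qquad y_{t,\ell} := \frac{\psi(t)-\sqrt{\ell}}{\sqrt{(t-\ell)/3}}.
\]
A short logarithmic differentiation of $y_{t,\ell}^2 = 3(\psi(t)-\sqrt{\ell})^2/(t-\ell)$ in $\ell$ identifies a unique interior minimum at $\ell^* = t^2/\psi(t)^2$, at which $y_{t,\ell^*}^2 = x(t)^2 - 3$. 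Consequently $y_{t,\ell} \ge \sqrt{x(t)^2 - 3}$ throughout, and Mills' ratio gives $\overline{\Phi}(y_{t,\ell})/\overline{\Phi}(x(t)) \to e^{3/2}$, so \eqref{sd} follows from $P(A_t) = (1+o(1))\overline{\Phi}(x(t))$.

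For \eqref{eq:gn2-prep}, the plan rests on a Brownian-bridge heuristic. Conditional on $A_t$, the centered path $X_\ell := S_\ell^1 - \ell\psi(t)/t$ behaves like a bridge of variance $\sigma(\ell)^2 = \ell(t-\ell)/(3t)$, so $\{S_\ell^1 < c_\ell\}$ translates to $\{Z_\ell := X_\ell/\sigma(\ell) < -h(\ell)\}$ with $h(\ell)^2 = x(t)^2\delta^2\ell/((1+\delta)^2(t-\ell))$. For $\ell \ge \gamma_t$ this gives $h(\ell)^2 \ge 2\delta^2(\log_3 t)^{3/2}/(1+\delta)^2$, diverging in $t$. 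Since $\mathrm{Corr}(Z_\ell,Z_{\ell'}) = \sqrt{\ell(t-\ell')/(\ell'(t-\ell))}$ (for $\ell<\ell'$) gives $Z$ the correlation of an Ornstein--Uhlenbeck-type process on the log-scale $\tau = \log\ell$ with mean-reversion rate $1/2$, the maximum of $-Z$ on an interval of log-length $\log_2 t$ is only $O(\sqrt{\log_3 t})$, much smaller than $h(\gamma_t) \gtrsim (\log_3 t)^{3/4}$.

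To execute this rigorously, dyadic-chain $(\gamma_t,t]$ via $\ell_k := 2^k\gamma_t$, $k=0,\ldots,K$, with $K \asymp \log_2 t$. On each bin $I_k := (\ell_k,\ell_{k+1}]$, decompose using a tuning parameter $\theta_k \asymp \sqrt{\ell_k \log K}$:
\[
\{\exists\,\ell \in I_k : S_\ell^1 < c_\ell\} \;\subseteq\; \{S_{\ell_k}^1 < c_{\ell_k} + \theta_k\} \;\cup\; \{\max_{\ell \in I_k}(S_{\ell_k}^1 - S_\ell^1) > \theta_k\}.
\]
For the first event intersected with $A_t$, use the independence of $S_{\ell_k}^1$ and $S_t^1 - S_{\ell_k}^1$ combined with local CLT and moderate deviations: completing the square in the joint Gaussian exponent gives a ratio to $P(A_t)$ bounded by a Gaussian tail at $h(\ell_k) - O(\sqrt{\log K})$. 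For the oscillation event intersected with $A_t$, apply the reflection principle to the independent increment walk on $I_k$, together with the independence of $(S_\ell^1 - S_{\ell_{k+1}}^1)_{\ell>\ell_{k+1}}$ from everything preceding $\ell_{k+1}$. Both contributions are at most $CP(A_t)\exp(-c_\delta(\log_3 t)^{3/2})$, so summing over $k$ yields $\zeta_t = O(\log_2 t)\exp(-c_\delta(\log_3 t)^{3/2}) \to 0$.

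The main obstacle is implementing the hybrid of Brownian-bridge-style joint Gaussian analysis and reflection-principle oscillation control rigorously at the discrete walk level, uniformly across the $O(\log_2 t)$ dyadic scales. The delicate point is choosing $\theta_k$ so that both the ``sampled'' event and the ``oscillation'' event decay at the target rate $\exp(-c_\delta(\log_3 t)^{3/2})$, which requires sharp moderate-deviation and local-CLT estimates applied uniformly to each $\ell_k$ and to the increments over each bin.
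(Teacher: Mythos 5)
Your treatment of \eqref{sd} is correct and is essentially the paper's argument: after the uniform moderate-deviation normal approximation, locating the least favorable $\ell^*=3t/x(t)^2\le\gamma_t$ and checking $y_{t,\ell^*}^2=x(t)^2-3$ is the same computation the paper does by completing the square, and it yields the constant $C$.

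The plan for \eqref{eq:gn2-prep}, however, has a genuine gap at the covering step. Your inclusion
$\{\exists\,\ell\in I_k: S^1_\ell<c_\ell\}\subseteq\{S^1_{\ell_k}<c_{\ell_k}+\theta_k\}\cup\{\max_{\ell\in I_k}(S^1_{\ell_k}-S^1_\ell)>\theta_k\}$, with $c_\ell:=\psi(t)\ell/((1+\delta)t)$, is false on a dyadic bin: $c_\ell$ doubles across $I_k=(\ell_k,2\ell_k]$ while $\theta_k\asymp\sqrt{\ell_k\log K}$ is much smaller than $c_{\ell_k}$ (indeed $c_{\ell_k}/\sqrt{\ell_k}\gtrsim\delta(\log_3 t)^{3/4}$), so a path sitting constantly at $c_{\ell_k}+\theta_k+1$ crosses the barrier near the right end of the bin yet lies in neither event. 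If you repair the inclusion by replacing $c_{\ell_k}$ with $c_{\ell_{k+1}}=2c_{\ell_k}$, the ``sampled'' event stops being rare for the whole range $\delta\in(0,1)$ covered by the lemma, since conditionally on $A_t$ the typical value of $S^1_{\ell_k}$ is about $(1+\delta)c_{\ell_k}<2c_{\ell_k}$; the dyadic mesh is simply too coarse, and you would need a geometric mesh of ratio $1+O(\delta)$ (giving $\asymp\delta^{-1}\log_3 t$ scales -- note also that even dyadically the number of scales between $\gamma_t$ and $t$ is $\asymp\log_3 t$, not $\log_2 t$). A second gap is the oscillation term: the independence of $(S^1_\ell-S^1_{\ell_{k+1}})_{\ell>\ell_{k+1}}$ from the past does not by itself produce a bound of the form $\zeta_t P(A_t)/K$, because $P(A_t\mid\mathcal{F}_{\ell_{k+1}})$ depends on $S^1_{\ell_{k+1}}$, which is unconstrained on the oscillation event; to beat $P(A_t)$ you would have to partition over that value and pair prefix/suffix moderate-deviation estimates, i.e.\ essentially redo the sampled-value analysis at every scale. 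The paper sidesteps both difficulties with a first-passage decomposition: stop at $\tau_t=\inf\{\ell\ge\gamma_t: S^1_\ell<c_\ell\}$, localize $\tau_t$ to arithmetic blocks $J_j=(s_j,s_{j+1}]$, $s_j=jt/\log_2 t$ (this mesh is chosen so the mismatch between $s_j$ and $\tau_t$ costs only $x(t)^2/\log_2 t=O(1)$ in the Gaussian exponent), treat the case $\tau_t=\gamma_t$ by partitioning the value of $S^1_{\gamma_t}$, and bound each term by a product of two tail probabilities compared with $P(A_t)$ through $\theta(u)=\frac{u}{1-u}\frac{\delta^2}{(1+\delta)^2}$; no within-block oscillation control is needed at all. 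Your bridge/barrier heuristic and the target gain $\exp(-c_\delta(\log_3 t)^{3/2})$ per scale are the right picture, but as written the chaining scheme does not close.
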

\blue{
\begin{rem} In \eqref{sd} we claim that the decay $t \mapsto P(A_t)$ of our
moderate deviations upper-tail event $A_t$ is within a universal factor of that for such an event 
with a granted (free) upper fluctuation of $\sqrt{\ell}$ in the first $\ell \le \gamma_t$ 
steps of $S_\ell^1$. 
The event $L_t$ requires $S_\ell^1$ 
to stay above a linear slope which is $1/(1+\delta)<1$ of the slope $\psi(t)/t$ of $A_t$. Thus, 
if $\ell \mapsto S_\ell^1$ was a \abbr{srw} on $\Z$, we could have applied en-route to \eqref{eq:gn2-prep}
a ballot theorem (after conditioning on $S_{\gamma_t}^1$ and $S_t^1$). It is not so here, due to the additional randomness in number 
of steps of the \abbr{srw} $S_\ell \in \Z^3$ along the other two coordinate axis. We thus resort 
to proving \eqref{eq:gn2-prep} via Gaussian approximations.
\end{rem}} 
\begin{proof}[Proof of Lemma \ref{gn1-1+gn1-2}] 
Let $\eta_t := \gamma_t/t = (\log_3 t)^{3/2}/(\log_2 t)$,
setting $x(t,r):= (x(t) - \sqrt{3 r})/\sqrt{1-r}$ for $r \le \eta_t$ and $x(t)=x(t,0) :=\sqrt{2 \log_2 t}$.
Then, in view of the uniform Gaussian approximation of \eqref{eq:lclt},
we get \eqref{sd} once we show that uniformly in
$r \le \eta_t$, the standard \blue{Gaussian} measure of $[x(t,r),\infty)$, is at most 
$C$ times the \blue{Gaussian} measure of $[x(t),\infty)$. Note that $\eta_t \to 0$ and $x(t) \to \infty$
as $t \to \infty$, hence $x(t,r)/x(t) \to 1$ uniformly in $r \le \eta_t$. It thus remains only to 
show that for some $C<\infty$ and all $t$ large enough,
\begin{equation}\label{eq:quad-bdd}
\inf_{r \le \eta_t} \{ x(t,r)^2 - x(t)^2 \} \ge - 2 \log C \,.
\end{equation}
Next, 
our expression for $x(t,r)$ is such 
\[
(1-r) [ x(t,r)^2 - x(t)^2 ] = \blue{\big(x(t) \sqrt{r}  - \sqrt{3}\big)^2  + 3 r - 3} \ge - 3 \,,
\]
yielding \eqref{eq:quad-bdd} and thereby also \eqref{sd}.

Next, setting $s_j := j t/(\log_2 t)$, we partition $(\gamma_t,t]$ into the disjoint intervals 
$J_j=(s_j,s_{j+1}]$, $j \in [(\log_3 t)^{3/2},\log_2 t)$. We likewise partition the events
$A_t \cap L_t^c$ according to whether the 
stopping time $\tau_t := \inf\{ \ell \ge \gamma_t : S_\ell^1 < \frac{\psi(t) \ell}{(1+\delta) t} \}$
equals $\gamma_t$ or alternatively which interval $J_j$ contains $\tau_t$.
Note that if $\tau_t > s_j$ then $S_{s_j}^1 \ge \frac{\psi(t) s_j}{(1+\delta) t}$
and conditioning on the \abbr{srw} filtration at $\tau_t \in J_j$, we get by the
strong Markov property (and i.i.d. increments), of the \abbr{srw}, that 
\[
P(A_t \cap \{\tau_t \in J_j\}) \le q_t (s_j,0) \sup_{s \in J_j} p_t (t-s,0) \,,
\] 
where 
\[
q_t(s_j,y) := P\Big( \big( S_{s_j}^1\big)_+ \ge \frac{\psi(t) s_j}{(1+\delta) t} - y \Big)  \,, \qquad 
p_t(r,y) := P \Big( S_{r}^1 \ge \frac{\psi(t) (\delta t + r)}{(1+\delta)t} + y \Big) \,
\]
and $ \big( S_{s_j}^1\big)_+:= S_{s_j}^1 \vee 0$.  The only other way for the event $L_t^c$ to occur, is by having 
\[
S^1_{\gamma_t} < \frac{\psi(t) \eta_t}{1+\delta} \blue{\eqqcolon} \Delta_t \log_2 t \,.
\]
Partitioning 
to $\{S^1_{\gamma_t} \in I_i\}$, for
$I_i :=[(\log_2 t - i) \Delta_t-\Delta_t,(\log_2 t-i) \Delta_t)$ when $0 \le i < \log_2 t$,
and $I_{\log_2 t} :=(-\infty,0)$, yields that
\begin{align*}
P(A_t \cap \{ S^1_{\gamma_t} \in I_i\}) \le q_t(\gamma_t, i \Delta_t+\Delta_t) p_t(t-\gamma_t,i \Delta_t) 
\end{align*}
and consequently,
\begin{align*}
P(A_t \cap L_t^c) 
& \le \sum_{i} P(A_t \cap \{ S^1_{\gamma_t} \in I_i\})  
+ \sum_{j} P(A_t \cap \{ \tau_t \in J_j \} ) \\
& \le 
(1+\log_2 t) \Big[\sup_{y  \in [0,\Delta_t \log_2 t]
} \{ q_t(\gamma_t,y+\Delta_t) p_t(t-\gamma_t,y)  \}
     +\sup_{s \in J_j, j \ge (\log_3 t)^{3/2}} q_t(s_j,0) p_t(t-s,0) \Big] \,.
\end{align*}
As $P(A_t)=p_t(t,0)$, we thus get \eqref{eq:gn2-prep}, if
for some $\zeta_t \to 0$ 
\begin{align}\label{eq:bd-L}
q_t(s_j,0) p_t(t-s,0) &\le \frac{\zeta_t}{\log_2 t} p_t(t,0) \,,  \qquad \forall j \ge (\log_3 t)^{3/2}, s \in J_j\,, \\
q_t(\gamma_t,y+\Delta_t) p_t(t-\gamma_t,y) &\le \frac{2 \zeta_t}{\log_2 t} p_t(t,0) \,, \qquad \forall 
y \red{\in [0, \Delta_t \log_2 t]}  \,.
\label{eq:bd-Y}
\end{align}
Proceeding to verify \eqref{eq:bd-L} and \eqref{eq:bd-Y}, we rely on \eqref{eq:lclt} to replace both $q_t(\cdot)$
and $p_t(\cdot)$ by the Gaussian measure of the corresponding intervals.  We claim that when doing so,  all 
partial sums appearing in  \eqref{eq:bd-L} and \eqref{eq:bd-Y}
be at time index ${\sf t} \ge \delta \psi(t)/2 \to \infty$.  Indeed,  note that 
$p_t(r,y)=0$ for $y \ge 0$,  unless the time index $r$ is at least $\delta \psi(t)/2$,  whereas in all the terms 
$q_t(\cdot,\cdot)$ that appear there,  such time indices are $s_j \ge \gamma_t \ge \delta \psi(t)/2$. 
Further,  the argument ${\sf x}$ of $\overline{\Phi}(\cdot)$
in such Gaussian approximations of the probabilities 
$q_t(\cdot)$ and $p_t(\cdot)$ that appear in \eqref{eq:bd-L},  is $\frac{x(t)}{1+\delta}\sqrt{s_j/t}$ 
and $\frac{x(t)}{1+\delta} \frac{1+\delta-s/t}{\sqrt{1-s/t+\xi_t}}$ at $\xi_t=0$,  respectively (where 
$x(t)=\sqrt{3}\psi(t)/\sqrt{t}$). Taking instead
$\xi_t := (\log_2 t) t^{-1/6}$ guarantees that all such
space arguments ${\sf x}$ be uniformly of $o({\sf t}^{1/6})$. The
Gaussian approximations then hold, as in \eqref{eq:lclt}, with \blue{the same}
$o(1)$ \emph{relative} error \blue{for all the terms, which we thus ignore hereafter.
Note also that} $s/t \in [\eta_t,1)$ with $s_j/t \ge \blue{s/t} - \varepsilon_t$
for $\varepsilon_t:=(\log_2 t)^{-1}$.  In conclusion,  by the preceding it suffices for \eqref{eq:bd-L} 
to show that
\begin{equation}\label{eq:bd-L2}
\sup_{u \in [\eta_t,1)} \Big\{
\overline{\Phi} \Big( \frac{x(t)}{1+\delta}\sqrt{u-\varepsilon_t} \, \Big) 
\overline{\Phi} \Big(\frac{x(t) (1+\delta-u)}{(1+\delta)\sqrt{1-u\red{+\xi_t}}} \Big) \Big\}
 \le \frac{\zeta_t}{\log_2 t} \overline{\Phi}(x(t))\,.
\end{equation}
\blue{The arguments of $\overline{\Phi}(\cdot)$ in \eqref{eq:bd-L2} grow to infinity with $t$, 
uniformly over $u \in [\eta_t,1)$. Thus,} 
recalling that $|\log \overline{\Phi}(y) +\log y + y^2/2|$ is bounded at $y \to \infty$, 
upon taking the logarithm of both sides of \eqref{eq:bd-L2}, noting that $\eta_t \ge 2\varepsilon_t$
\blue{and ignoring all the uniformly bounded terms, such as} 
$x(t)^2 \varepsilon_t$ \blue{and $\log x(t) - \frac{1}{2} \log_3 t$, it suffices to show that} 
for some $\tilde{\zeta}_t \to 0$ as $t \to \infty$, \blue{
\[
\sup_{u \in [\eta_t,1)} \Big\{
\frac{1}{2} x(t)^2 -\frac{1}{2} \frac{x(t)^2 u}{(1+\delta)^2}
-\frac{1}{2} \frac{x(t)^2 (1+\delta-u)^2}{(1+\delta)^2 (1-u \red{+\xi_t})} 
-\frac{1}{2} \log u
 \Big\}
 \le \log \tilde{\zeta}_t - \frac{1}{2} \log_3 t \,.
\]
With $\eta_t \gg \xi_t$, it is easy to verify that for any $u \ge \eta_t$}
\[
\frac{u}{(1+\delta)^2} + \frac{(1+\delta-u)^2}{(1+\delta)^2 (1-u\red{+\xi_t})} -1 \ge  
\frac{u}{(1-u\red{+\xi_t})} \frac{\delta^2}{2 (1+\delta)^2} \eqqcolon \red{\theta_t}(u) \,.
\]
\blue{Hence, substituting $x(t)^2 = 2 \log_2 t$ in the preceding, we arrive} (after some algebra) at
\begin{equation}\label{eq:bd-L3}
\sup_{u \in [\eta_t,1)} \Big\{ -(\log_2 t) \red{\theta_t} (u) - \frac{1}{2} \red{\log u} \Big\}
\le  \log \tilde{\zeta}_t - \frac{1}{2} \log_3 t  \,.
\end{equation}
Since $u \mapsto \theta_t(u)$ is non-decreasing, the supremum on the 
left-side of \eqref{eq:bd-L3} is attained at
$u=\eta_t$, where for large $t$ it is at most $-\frac{\delta^2}{\red{10}} (\log_3 t)^{3/2}$. This is more
than enough for \eqref{eq:bd-L3}  to hold, thereby establishing \eqref{eq:bd-L}. We next turn to 
\eqref{eq:bd-Y}, \blue{where the Gaussian approximation of \eqref{eq:lclt} again applies for 
all three probabilities, with uniform, hence negligible, $o(1)$ relative errors. Thus,} 
analogously to \eqref{eq:bd-L2}, the bound \eqref{eq:bd-Y} is a consequence of \blue{having}
\begin{equation}\label{eq:bd-Y2}
\sup_{v \in [0,1]} \Big\{
\overline{\Phi} \Big( \frac{x(t)}{1+\delta} \sqrt{\eta_t} (v-\varepsilon_t) \, \Big) 
\overline{\Phi} \Big(\frac{x(t) (1+\delta-v \,\eta_t)}{(1+\delta)\sqrt{1-\eta_t}} \Big) \Big\}
 \le \frac{\zeta_t}{\log_2 t} \overline{\Phi}(x(t))
\end{equation}
\red{(temporarily setting $\overline{\Phi}({\sf x})=1$ wherever ${\sf x}<0$). Considering first
$v \le 1/2$,} we bound the left-most term \red{of \eqref{eq:bd-Y2}}
by one, \blue{and as before,} take the logarithm of both sides, \blue{replace 
$\log \overline{\Phi}(y)$ by $-\log y - y^2/2$ and eliminate all uniformly 
bounded terms} 
to find that \eqref{eq:bd-Y2} holds because
\[
\blue{\inf_{\nu \in [0,\frac{1}{2} ]}} \Big\{ \frac{(1+\delta-v \eta_t)^2}{(1+\delta)^2 (1-\eta_t)} \Big\} - 1 \ge \frac{\eta_t  \delta}{1+\delta} 
\]
and $x(t)^2 \eta_t = 2 (\log_3 t)^{3/2} \gg \log_3 t$. To complete the proof of \eqref{eq:bd-Y2}, 
it thus suffices (similarly to \eqref{eq:bd-L3}), to have for some 
$\tilde{\zeta}_t \to 0$, that 
\begin{equation}\label{eq:bd-Y3}
\sup_{v \in [1/2,1]} \Big\{ -(\log_2 t) \theta_t (v) \Big\} 
\le  \log \tilde{\zeta}_t - \log_3 t  
\,,
\end{equation}
where, \blue{recalling that $\eta_t \gg \varepsilon_t$,} 
it is easy to check that for any $\nu \in [\frac{1}{2},1]$, 
\[
\theta_t(v) := \frac{\eta_t \red{(v-\varepsilon_t)}^2}{(1+\delta)^2} +
\frac{(1+\delta-v \eta_t)^2}{(1+\delta)^2 (1-\eta_t)} - 1 \ge \theta_t(1) \ge 
\frac{\eta_t \delta^2}{(1+\delta)^2} \,.
\]
The preceding suffices for \eqref{eq:bd-Y3} and thereby for \eqref{eq:bd-Y},
thus completing the proof.
\end{proof}

\begin{proof}[Proof of Lemma \ref{gn2+gn3}] Starting with \eqref{eq:gn3}, note that for some $C_1<\infty$ 
and any $\ell \ge 0$,
\[
E[G(0,S_\ell)] = \sum_{i = \ell}^\infty P^0(S_i=0) \le C_1 \big(1+\sqrt{\ell}\big)^{-1} \,.
\]
Further, recall from \eqref{grf} that 
$G(0,y) \le C_2/(1+|y|)$ for some $C_2<\infty$ and all $y \in \Z^3$. Hence,
\begin{equation}\label{eq:bd-At}
E[G(0,S_\ell) 1_{A_t}] \le C_2 (1+\sqrt{\ell})^{-1} P(A_t) +  
\sum_{|y| \le \sqrt{\ell}} G(0,y) P(\{S_\ell=y\} \cap A_t) \,.
\end{equation}
With $(S_\ell,S_t-S_\ell) \stackrel{d}{=} (S_\ell,\hat{S}_{t-\ell})$ for
\abbr{srw} $(\hat{S}_m)$ which is independent of $(S_m)$, if $y \in \Z^3$ 
is such that $|y^1| \le |y| \le \sqrt{\ell}$, then 
\[
P(\{S_\ell = y \} \cap A_t) \le P\big( S_\ell = y\big) P\big( \hat{S}^1_{t-\ell} \ge \psi(t) - \sqrt{\ell} \big) \,.
\]
Therefore, thanks to \eqref{sd}, for any $\ell \le \gamma_t$ the right-most term in \eqref{eq:bd-At} 
is at most,
\[
E[ G(0,S_\ell)] P(\hat{S}^1_{t-\ell} \ge \psi(t) - \sqrt{\ell}) \le 
C_1 (1+\sqrt{\ell})^{-1} C P(A_t) \,.
\]
We thus deduce from \eqref{eq:bd-At}, that for some $C_3,
C_4<\infty$, 
\[
E[V_{[0,\gamma_t]}] = \sum_{\ell=0}^{\gamma_t} E[G(0,S_\ell) 1_{A_t} ] \le 
C_3 P(A_t) \sum_{\ell=0}^{\gamma_t} (1+\sqrt{\ell})^{-1} \le C_4 P(A_t) \sqrt{\gamma_t} \,.
\]
Consequently, by Markov's inequality and our choice of $\gamma_t$, 
\[
P\Big(  V_{[0,\gamma_t]} \ge \frac{\delta t}{h_3(t)} \Big)  \le \zeta_t P(A_t) \,,
\]
where our choice of $\gamma_t$ results with  
$\zeta_t := C_4 h_3(t) \sqrt{\gamma_t}/(\delta t)\to 0$ as $t \to \infty$.

Turning to \eqref{eq:gn2}, recall our choice of $\gamma_t$ implying that
\[
\sum_{\ell \in (\gamma_t,t]} \frac{1}{\ell} \le \log (t/\gamma_t) \le \log_3 t \,.
\]
Further, with $h_3(t)=(\pi/3) \psi(t) (\log_3 t)^{-1}$, it follows that 
$t/(2 h_3(t))=\frac{3}{2\pi} (t/\psi(t)) (\log_3 t)$. Consequently, for all $t$ large enough,
the event $L_t$ of \eqref{eq:gn2-prep} implies, thanks to \eqref{grf}, that 
\[
V_{(\gamma_t,t]} \blue{\le} \sum_{\ell \in (\gamma_t,t]} G(0,S_\ell) \le \frac{3+o(1)}{2\pi} 
\sum_{\ell \in (\gamma_t,t]} \red{|}S_\ell^1\red{|}^{-1} \le \frac{3+o(1)}{2\pi} \frac{(1+\delta) t}{\psi(t)} (\log_3 t)
\le \frac{(1+2\delta) t}{2 h_3(t)} \,.
\]
Since the same conclusion applies when $A_t^c$ holds (in which case $V_{(\gamma_t,t]}=0$), we
see that \eqref{eq:gn2} is an immediate consequence of \eqref{eq:gn2-prep}.
\end{proof}


\subsection{The upper bound in the limsup-\abbr{LIL}}\label{subsec:limsup-3u}
Recall that $\crn$ is non-decreasing. Further, for any $t_n = q^n$, $q>1$ 
we have that eventually $h_3(t_n)/h_3(t_{n-1}) \le q$. It thus suffices to prove the upper bound 
in our $\limsup$-\abbr{LIL} only along each such sequence $t_n$ (thereafter 
taking $q \downarrow 1$ to complete the proof). To this end, fix $q>1$ and $\delta,\eta > 0$, 
\blue{and set} $m:=(1+\delta)^3 \psi(t_n)$, \blue{with} $r_m = m/b_m \uparrow \infty$ for
$b_m :=2 \eta (1+\delta)^6 (\log_2 t_n)$ (so $r_m:= \psi(t_n)/(2 \eta (1+\delta)^3 \log_2 t_n)$).
We aim to cover $\R_{t_n}$ by the union $\C_m^\star(r_m)$ of $b_m$ balls of radius 
$r_m$ each, with the centers of consecutive balls at most $r_m$ apart.
Indeed, as shown in Section \ref{sec-Green} 
(see Remark \ref{for-ubd-sec3}), this would yield  
$R_{t_n} \le (1+\delta+o(1))^3 h_3(t_n)$, so we then conclude 
by taking $\delta \downarrow 0$ and $q \downarrow 1$.

Specifically, starting at $T_0=0$, set the increasing stopping times  
\begin{align*}
T_i :=\inf \{k>T_{i-1} :  |S_k-S_{T_{i-1}}| > r_m-1 \}, \qquad \forall i \ge 1 \,,
\end{align*}  
noting that the event $\{T_{b_m} \ge t_n\}$ implies the aforementioned containment 
$\R_{t_n} \subseteq \C_m^\star(r_m)$. Further, with 
$\exp(-(1+\delta) \log_2 t_n) \le C n^{-(1+\delta)}$ summable, upon employing 
the first Borel-Cantelli lemma, it remains only to establish the following key lemma.
\begin{lem}\label{bl1}
For any $q>1$ and small $\delta > 0$, there exist $\eta>0$ and $C<\infty$ such that 
\begin{align}\label{eq:Tb-bd}
P(T_{b_m}<t_n) \le C\exp(-(1+\delta) \log_2 t_n) \qquad \forall n \,.
\end{align}
\end{lem}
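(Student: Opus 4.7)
My plan is to bound $P(T_{b_m} < t_n)$ by applying Chernoff's inequality to the sum $T_{b_m} = \sum_{i=1}^{b_m} X_i$, where $X_i := T_i - T_{i-1}$. By the strong Markov property at the stopping times $\{T_i\}$, these inter-exit times are i.i.d., each distributed as the first exit time of \abbr{srw} on $\Z^3$ from a ball of radius $r_m - 1$. The Chernoff bound then yields
\begin{equation*}
P(T_{b_m} < t_n) \le e^{\lambda t_n} E[e^{-\lambda X_1}]^{b_m}, \qquad \lambda > 0,
\end{equation*}
so the task reduces to sharp control of $E[e^{-\lambda X_1}]$ for a well-chosen $\lambda$.

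For this I will compare $X_1$ with the analogous Brownian exit time. For $3$-dimensional standard Brownian motion starting at the origin, the exit time $\xi$ from the unit ball has the explicit Laplace transform $f(\mu) := E[e^{-\mu \xi}] = \sqrt{2\mu}/\sinh(\sqrt{2\mu})$, obtained by solving the Helmholtz equation under spherical symmetry, so that $g(\mu) := -\log f(\mu) \sim \sqrt{2\mu}$ as $\mu\to\infty$. Since \abbr{srw} on $\Z^3$ has coordinate-wise variance $1/3$, $X_1/r_m^2$ converges in distribution to $3\xi$ as $r_m\to\infty$, and a quantitative version (via the local CLT, or discrete potential theory for the Helmholtz problem on the discrete ball) should yield, uniformly for $\lambda r_m^2$ in a bounded interval,
\begin{equation*}
E[e^{-\lambda X_1}] \le f(3 \lambda r_m^2)\,\bigl(1 + O(r_m^{-\alpha})\bigr)
\end{equation*}
for some $\alpha>0$. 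Since $b_m = O(\log_2 t_n)$ while $r_m$ grows polynomially in $t_n$, the factor $(1+O(r_m^{-\alpha}))^{b_m}$ stays $1+o(1)$ and is harmless.

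Setting $\mu := 3\lambda r_m^2$ and using $t_n/r_m^2 = 6\eta^2(1+\delta)^6 \log_2 t_n$ together with $b_m = 2\eta(1+\delta)^6 \log_2 t_n$, Chernoff becomes
\begin{equation*}
\log P(T_{b_m} < t_n) \le -2(1+\delta)^6 \bigl[\eta g(\mu) - \eta^2 \mu\bigr] \log_2 t_n + O(1).
\end{equation*}
I would then maximize over $\mu$ via $g'(\mu) = \eta$, finding $\mu^* \approx 1/(2\eta^2)$ and, using the asymptotics of $g$,
\begin{equation*}
\eta g(\mu^*) - \eta^2 \mu^* = \tfrac{1}{2} - \eta\log(2/\eta) + o(\eta)
\end{equation*}
as $\eta\downarrow 0$. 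Thus, given $\delta>0$, choosing $\eta>0$ small enough (e.g.\ $\eta\log(1/\eta)\ll\delta$) secures $2(1+\delta)^5 [\eta g(\mu^*) - \eta^2 \mu^*] > 1$, yielding the claimed exponent $-(1+\delta)\log_2 t_n$. The main obstacle will be making the Brownian approximation of the Laplace transform quantitative: the error gets raised to the large power $b_m\to\infty$, so a polynomial-in-$r_m$ rate, uniform on the relevant range of $\lambda r_m^2$, is what will require care.
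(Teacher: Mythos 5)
Your proposal is correct and follows essentially the same route as the paper: decompose $T_{b_m}$ into i.i.d.\ exit times from $\BB(0,r_m-1)$ via the strong Markov property, replace each by the Brownian exit time $3r_m^2\bar{T}_1$, and apply an exponential Chebyshev bound, optimizing the transform parameter and then taking $\eta$ small relative to $\delta$. The only real difference is that you invoke the exact Laplace transform $\sqrt{2\mu}/\sinh\sqrt{2\mu}$ where the paper settles for the cruder bound $E[e^{-\lambda^2\bar{T}_1/2}]\le c_\delta e^{-\lambda(1-\delta)}$ from an exponential martingale over a $\delta$-cover of $\sss^2$; the discrete-to-Brownian replacement of the exit time, which you flag as the remaining technical work, is exactly the step the paper also treats as a standard invariance-principle matter.
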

\begin{proof} By the strong Markov property and the independence of increments of the walk,
we see that $T_{b_m}$ is the sum of $b_m$ i.i.d. copies of the first exit time $T_1$ 
of the (discrete) ball $\BB(0,r_m-1)$, by the 3D-\abbr{srw}. \blue{As $r_m \uparrow \infty$,  
Skorokhod's embedding implies, see \cite[Lemma 3.2]{LL13}, that for some
$m_o,c<\infty$ and $\varepsilon>0$ (depending only on $\delta>0$), 
all $m \ge m_o$ and $u \ge 0$,
\[
P(T_1 < u) \le c e^{-r_m^\varepsilon} +  P(3 \underline{T} < u) = 
c e^{-r_m^\varepsilon} + P(3 (1+\delta)^{-1} r_m^2 \barT_1 < u ) \,,
\]}
with $\barT_1 := \inf \{ t \ge 0 : |B_t| \ge 1 \}$
the Brownian hitting time of the unit sphere $\sss^2$
\blue{and $\underline{T} := \inf \{ t \ge 0: |B_t| \ge r_m / \sqrt{1+\delta} \}$ 
(so the identity above is merely Brownian scaling). Further, here 
$b_m e^{-r_m^\varepsilon} \ll \exp(-2 \log_2 t_n)$ and} $t_n = 3 r_m^2\, \eta \,b_m$ 
\blue{with} $b_m/(2\eta) =3 m^2/(2 t_n) = (1+\delta)^6 (\log_2 t_n)$. \blue{It thus} 
suffices to show that for 
some $\eta=\eta(\delta)>0$ and all $m$,
\begin{equation}\label{eq:bm-ubd}
P\Big(\frac{1}{b_m} \sum_{i=1}^{b_m} \barT_i < \eta \Big) \le e^{-(1-\delta)^3 b_m/(2 \eta)} \,,
\end{equation}
where $\barT_i$ are i.i.d. copies of $\barT_1$. To this end, covering $\sss^2$ by $c_\delta$ 
balls of radius $\delta$ each, centered at some $\theta_i \in \sss^2$, we have by the triangle 
inequality that 
\[
\max_i \{ \langle \theta_i, B_{\barT_1} \rangle \} \ge  1-\delta \,.
\]
Hence, fixing $\lambda>0$, we get upon applying Doob's optional stopping theorem 
for the martingale $M_t = \sum_i \exp(\lambda \langle \theta_i, B_t \rangle - \lambda^2 t/2)$ 
at the stopping time $\barT_1$, that 
\[
c_\delta = M_0 = E[M_{\barT_1}] \ge e^{\lambda (1-\delta)} E[e^{-\lambda^2 \barT_1/2} ] \,.
\]
Consequently, by Markov's inequality, we have for any $\eta,\lambda,\delta>0$ and integer $b \ge 1$, that
\begin{equation}\label{eq:mgf}
P(\frac{1}{b} \sum_{i=1}^{b} \barT_i < \eta) \le e^{\lambda^2 b \eta/2} E[e^{-\lambda^2 \barT_1/2} ]^b 
\le \Big( c_\delta e^{\lambda^2 \eta/2-\lambda (1-\delta)} \Big)^b \,.
\end{equation}
Taking the optimal $\lambda=(1-\delta)/\eta$ it is easy to check that for 
$\eta \le \eta(\delta) = \delta (1-\delta)^2/(2 \log c_\delta)$, the \abbr{lhs} of \eqref{eq:mgf} is 
at most $\exp(-(1-\delta)^3 b/(2\eta))$. We thus got 
\eqref{eq:bm-ubd} for any $\delta>0$ provided $\eta \le \eta(\delta)$, thereby completing the proof.
\end{proof}

\begin{rem} One has for any $\delta>0$ small and all $t$ large enough, the classical bound
\begin{align*}
P( \max_{1\le k\le t}  |S_k|\ge (1+\delta)^3 \psi(t)  ) \le C e^{-(1+\delta) \log_2 t} \,.
\end{align*}
We need in \eqref{eq:Tb-bd} a stronger result, since for any $b_m$ and $r_m \gg 1$,
 \begin{align*}
\{ \max_{1\le k\le t_n}  |S_k|\ge b_m r_m \} \subset \{ T_{b_m} < t_n \},
\end{align*}
and while $b_m r_m = (1+\delta)^3\psi(t_n)$, our crude use of $\delta$-cover of $\sss^2$ in 
proving Lemma \ref{bl1} requires us to also have $b_m/(\log_2 t_n) \to 0$ as $\delta \downarrow 0$. 
\end{rem}


\subsection{The upper bound in the liminf-\abbr{LIL}}\label{ioo2}
For any $A \subset \RR^3$ and $r> 0$, let 
\begin{align*}
\mathrm{Nbd}(A,r):= \bigcup_{x \in A} \BB(x,r) 
\end{align*}
denote the $r$-blowup of $A$. Utilizing \cite{Ch17}, we first relate $\crn$ with a suitable 
Brownian capacity, as stated next.
\begin{lem}
We can couple the \abbr{srw} with a 3D Brownian motion $(B_t, t \ge 0)$, such that 
\begin{align}\label{q0}
 \lim_{n\to \infty}\frac{\crn }{\ca_{B} (B[0,n/3] )}=\frac{1}{3}
 \quad\text{ a.s.}  
\end{align}
and for any $\delta \in (0,1/2)$, 
\begin{align}\label{q1}
 \lim_{n\to \infty}\frac{\crn }{\ca_{B} (\mathrm{Nbd}(B[0,n/3], n^{1/2-\delta}) )}=\frac{1}{3}
 \quad\text{ a.s.}  
\end{align}
\end{lem}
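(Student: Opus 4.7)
The plan is to upgrade Chang's distributional limit $\crn/\sqrt n \Rightarrow (3\sqrt 3)^{-1}\ca_B(B[0,1])$ \cite{Ch17} to an almost sure statement on a single probability space, by first coupling the \abbr{srw} with a 3D Brownian motion and then converting between discrete and Brownian capacity via the pointwise asymptotic $G = 3\,G_B$ of \eqref{grf}.

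First I would invoke a multidimensional KMT/Einmahl strong approximation to place $(S_k)_{k\ge 0}$ and a standard 3D Brownian motion $(B_t)_{t\ge 0}$ on a common probability space such that
\[
\sup_{k \le n}\, |S_k - B_{k/3}| = O\!\bigl((\log n)^C\bigr) \quad \text{a.s.}
\]
The time-change $k \mapsto k/3$ reflects the per-coordinate increment variance $E[(X^1)^2]=1/3$. Combining this with the L\'evy modulus of continuity of $B$ on unit subintervals, for any fixed $\delta \in (0,1/2)$, almost surely eventually
\[
\R_n \subseteq \mathrm{Nbd}\bigl(B[0,n/3],\, n^{1/2-\delta}\bigr) \qquad \text{and} \qquad B[0,n/3] \subseteq \mathrm{Nbd}\bigl(\R_n,\, n^{1/2-\delta}\bigr).
\]

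Next, I would show that the three Brownian capacities $\ca_B(B[0,n/3])$, $\ca_B(\R_n)$, and $\ca_B(\mathrm{Nbd}(B[0,n/3],\,n^{1/2-\delta}))$ are all almost surely asymptotically equivalent. By monotonicity of $\ca_B$ under inclusion, the sandwich above reduces this to controlling the effect of a sub-diffusive tubular thickening. Since $\ca_B(B[0,n/3])$ is of exact order $\sqrt n$ by Brownian scaling, while the blown-up set can be covered by $O(n^{1/2+\delta})$ Brownian-capacity-trivial pieces (as in the continuum analogue of Lemma \ref{cy1} and Remark \ref{for-ubd-sec3}), the ratio of the blown-up capacity to the path capacity tends to $1$ a.s.\ Consequently, it is enough to prove \eqref{q0} with $\ca_B(B[0,n/3])$ replaced by $\ca_B(\R_n)$, after which \eqref{q1} follows automatically.

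The heart of the matter is then the discrete-to-continuum capacity identity
\[
\crn \,=\, \frac{1}{3}\, \ca_B(\R_n)\, \bigl(1 + o(1)\bigr) \quad \text{a.s.,}
\]
which I would derive from the sharp asymptotic $G(x,y) = 3\,G_B(x,y)(1+o(1))$ as $|x-y|\to\infty$. Using the variational / inverse-Green's-matrix formulation of $\crn$ from Lemma \ref{mat1} together with the definition of $\ca_B$, one discretizes a near-optimal continuum equilibrium measure for $\R_n$ onto the lattice points of $\R_n$ and compares the two energies; the factor $3$ comes out of the Green's function asymptotic, while the short-range ($|x-y|=O(1)$) contributions are negligible against the long-range, diffusive-scale part of size $\sqrt n\,(\log n)^{-O(1)}$. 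This is exactly the step where \cite{Ch17} enters.

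The main obstacle is precisely this last step: quantifying the discrete-versus-Brownian capacity comparison for the genuinely random, fractal set $\R_n$, whose intrinsic boundary is irregular and whose short-range self-intersections prevent naive use of the pointwise Green's function asymptotic. Once that is in hand, the coupling-based sandwich delivers \eqref{q0} and \eqref{q1} immediately.
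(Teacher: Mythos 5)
Your proposal does not close the two places where the real work lies, and two of its reductions are flawed as stated. First, the ``sandwich'' step is vacuous: $\R_n$ is a finite set of lattice points, which is polar for $3$-dimensional Brownian motion, so $\ca_B(\R_n)=0$; monotonicity of $\ca_B$ under the inclusions $\R_n \subseteq \mathrm{Nbd}(B[0,n/3],n^{1/2-\delta})$ and $B[0,n/3]\subseteq \mathrm{Nbd}(\R_n,n^{1/2-\delta})$ therefore gives no comparison between $\ca_B(B[0,n/3])$ and anything attached to $\R_n$ unless you first thicken $\R_n$, and then the needed comparison is no longer a formal consequence of inclusion. Second, your justification that $\ca_B(\mathrm{Nbd}(B[0,n/3],n^{1/2-\delta}))=(1+o(1))\,\ca_B(B[0,n/3])$ by covering does not work quantitatively: covering the sausage requires about $n^{2\delta}$ balls of radius $n^{1/2-\delta}$, and subadditivity together with $\ca_B(\BB(0,r))=2\pi r$ only yields an upper bound of order $n^{1/2+\delta}\gg\sqrt n$; the chain-of-balls bound of Lemma \ref{cy1} and Remark \ref{for-ubd-sec3} applies to cylinder-like sets (a single chain of $m/r_m$ balls with $r_m=o(m)$), not to the neighborhood of a diffusive path. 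This asymptotic equivalence is precisely the content of \eqref{q1} given \eqref{q0}, and the paper obtains it not by covering but by expressing the capacity of the blown-up set as $(2\pi+o(1))\,n^{1/2+\delta}$ times the hitting probability of that set by an independent Brownian motion started at a point $y_n$ with $|y_n|=n^{1/2+\delta}$, and then invoking \cite[(4.13) \& (4.4)]{Ch17} to see that from such a distance the neighborhood is hit essentially only when the path itself is hit.

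More importantly, you explicitly defer the discrete-to-continuum comparison $\crn=(\tfrac13+o(1))\times(\text{Brownian capacity})$ as ``the main obstacle,'' so the proposal contains no proof of \eqref{q0}; and the route you sketch (discretizing a near-optimal equilibrium measure and matching energies via $G\approx 3G_B$ from \eqref{grf}) is neither what \cite{Ch17} does nor what the paper does. The paper's proof is short precisely because \cite[(4.15)]{Ch17} already yields \eqref{q0} under Chang's coupling on events $E_n$, which hold a.s.\ for all large $n$ by \cite[(4.2)]{Ch17} and Borel--Cantelli; Chang's argument proceeds through hitting probabilities of $\R_n$ and of $B[0,n/3]$ from moderately distant points (the same mechanism used above for \eqref{q1}), exactly because the short-range irregularity and self-intersections of $\R_n$ that you flag make a direct energy-discretization comparison delicate. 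As written, your plan replaces the citable core of \eqref{q0} by an unexecuted program and supports \eqref{q1} by an argument that does not give the required bound.
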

\begin{proof}
The results were essentially shown in  \cite{Ch17}. Indeed, \cite[(4.15)]{Ch17} shows that 
\eqref{q0} holds when each ratio is restricted to the events $E_n$, while it is also shown 
that a.s. $E_n$ holds for all sufficiently large $n$ (combine \cite[(4.2)]{Ch17} with Borel-Cantelli).
Hence \eqref{q0} also holds without such a restriction. Turning to 
show \eqref{q1}, let $\tilde{P}$ denote the probability of an independent Brownian motion 
$(\tilde{B}_t)$. 
Fixing $\delta \in (0,1/2)$ and some 
$y_{n} \in \Z^3$ such that $|y_{n}|=n^{1/2+\delta}$, we similarly obtain (after dispensing 
of events $E_n$), that by the same argument as in \cite[(4.4)]{Ch17}, a.s. one has for all large $n$,
\begin{align*}
 \ca_{B} & (\mathrm{Nbd}(B[0,n/3],  n^{1/2-\delta}) )\\
& =(2\pi+o(1))
n^{1/2+\delta} 
\tilde{P}(\mathrm{Nbd}(B[0,n/3], n^{1/2-\delta}) \cap (y_n+\tilde{B}[0, \infty) )\neq \emptyset | B[0,n/3] ) \,.
\end{align*}  
By \cite[(4.13) \& (4.4)]{Ch17} , a.s., the latter expression is for all $n$ large 
$(1+o(1)) \ca_{B}(B[0,n/3])$. 
Thus,
\begin{align*}
 \lim_{n\to \infty}\frac{\ca_{B} (B[0,n/3] )}{\ca_{B} (\mathrm{Nbd}(B[0,n/3], n^{1/2-\delta}) )}=1
 \quad\text{ a.s.}  \,,
\end{align*}
which in view of (\ref{q0}) completes the proof of \eqref{q1}.  
\end{proof}

Proceeding to show the upper bound in the $\liminf$-\abbr{LIL}, let 
$r(n) := \pi\sqrt{n/(2 \log_2 n)}$, that is $r(n) = \sqrt{3} \, \hat{\psi} (n)$ for 
$\hat{\psi}(\cdot)$ of Proposition \ref{m1+}. Recall that  
by \cite[Lemma 1.1]{HH16} or \cite{K80}, and Brownian scaling, for some $c>0$ and all $t,r>0$, 
\begin{align}\label{eq:Tx-lb}
 P(\sup_{s\in [0,t)} \{ |B_s| \} \le r) \ge  2c e^{-\frac{\pi^2 t}{2 r^2}} \,.
\end{align}
We have used in \eqref{eq:Tx-lb} also that the largest eigenvalue of the Dirichlet Laplacian in the 
unit ball in $\RR^d$ is $-j^2$, where $j=j_{(d-2)/2,1}$ denotes the first positive 
zero of the Bessel function of the first kind with index $(d-2)/2$, and in 
particular that $j_{1/2,1}=\pi$ (see \cite[Page 490]{W44}). Considering 
\eqref{eq:Tx-lb} for $r=r(s_n)$, $s_n=n^n$
and the Brownian increments in the disjoint intervals $[s_{n-1},s_n)$ of length $s_n-s_{n-1}$,
result with
\begin{align}\label{chung*}
 P\bigg(\sup_{t \in \blue{[s_{n-1},s_n)}} \{ |B_t-B_{s_{n-1}}| \} \le  r(s_n) \bigg)
 \ge c \exp(-  \log_2 s_n) = \frac{c}{n \log n} \,.
\end{align}
Thus, thanks to the independence of Brownian increments on \blue{these disjoint intervals,}
we get from the second Borel-Cantelli lemma that 
\begin{align}\label{lil1}
 P\bigg(\liminf_{n \to \infty} \,
 (r(s_n)^{-1} \sup_{t \in \blue{[s_{n-1},s_n)}} \{ |B_t-B_{s_{n-1}}|\}  ) \le  1 \bigg)
=1.
\end{align}
Further, as $\sqrt{ s_{n-1} (\log_2 s_{n-1})} = o(r(s_n))$, 
by Kinchin's \abbr{LIL} for the Brownian motion, 
\begin{align}\label{lil2}
 P\bigg(\limsup_{n \to \infty} (r(s_n)^{-1} \sup_{t\le s_{n-1}} |B_t| ) =0 \bigg) =1.
\end{align}
Combining (\ref{lil1}) and (\ref{lil2}), we deduce that 
\[
 P\bigg( \liminf_{n \to \infty} (r(s_n)^{-1} \sup_{t<s_n} |B_t|)  \le  1 \bigg) =1. 
\]
This of course implies that also 
\begin{align}\label{lil3}
 P\bigg( \liminf_{n \to \infty} (r(n)^{-1} \sup_{t<n} |B_t|)  \le  1 \bigg) =1. 
\end{align}
Recall that for any $r>0$ one has that $r^{-1} \ca_B (\BB(0,r))=\ca_B(\BB(0,1)) = 2\pi$ 
($=\kappa_1$ on \cite[Page 356]{BBH01}).
By \eqref{lil3}, for any $\epsilon>0$, a.s. $B[0,n] \subset \BB(0,(1+\epsilon) r(n))$ for
infinitely many values of $n$, in which case also $\ca_B(B[0,n]) \le 2\pi (1+\epsilon) r(n)$. That is,
 \begin{align}\label{eq:ubd-cB}
 P\bigg(   \ca_B(B[0,n]) \le  2 \pi (1+\epsilon) r(n)  \quad \text{ i.o. } \bigg)=1.
\end{align}
By Brownian scaling, the sequence $\{\sqrt{3} \ca_B(B[0,n/3])\}$ has the same law as
the sequence $\{\ca_B(B[0,n])\}$. Thus, in view of (\ref{q0}), we can also 
construct a coupling so that for any $\epsilon>0$, we have that a.s.
\begin{align*}
\crn  \le & \frac{(1+\epsilon)}{3\sqrt{3}} \ca_B(B[0,n]) \,,
\end{align*}
 for all $n$ large enough. With $r(n)=\frac{3 \sqrt{3}}{2\pi} \hat{h}_3(n)$, it thus follows from \eqref{eq:ubd-cB} that 
 \begin{align*}
 P\Big( \, \crn  \le (1+\epsilon)^2 \hat{h}_3(n) \quad \text{ i.o. } \Big) =1 \,,
\end{align*}
and taking $\epsilon \downarrow 0$ establishes the stated upper bound in our $\liminf$-\abbr{LIL}.

\subsection{The lower bound of the liminf-\abbr{LIL}}
Fixing $a>0$, we have by \cite[(1.4)]{BBH01}, 
that for any $f(t) \uparrow \infty$ such that $f(t)=o(t^{2/3})$,
\begin{align}\label{large1}
\lim_{t\to \infty} 
\frac{f(t)}{t} 
\log P \bigg( | \mathrm{Nbd}(B[0,t], a) | \le \bigg(\frac{\pi}{\sqrt{2}} \bigg)^3 f(t)^{3/2}   \omega_3 \bigg)=-1, 
\end{align}
where $\omega_3$ denotes the volume of the unit ball (using here that 
the largest eigenvalue of the Dirichlet Laplacian in the \emph{unit volume ball} in $\RR^3$ 
is $-\omega_3^{2/3} \pi^2$). Fixing $\delta \in (0,1/2)$ as in \eqref{q1},
Brownian scaling by time factor $3 n^{2 \delta-1}$ yields equality in distribution between the sequences
\begin{align*}
|\mathrm{Nbd}(B[0,n/3], n^{1/2-\delta})|
\stackrel{d}{=} 3^{-3/2} n^{3/2-3\delta} |\mathrm{Nbd}(B[0, n^{2\delta}], 3^{-1/2})| \,.
\end{align*} 
Thus, considering  (\ref{large1}) for 
$a=3^{-1/2}$, $t=n^{2\delta}$ and $f(t)=(1-\epsilon)^3 n^{2\delta}(\log_2 n)^{-1}$, we arrive at
\begin{align*}
& P\bigg(|\mathrm{Nbd}(B[0,n/3], n^{1/2-\delta}) | 
 \le (1-\epsilon)^{2} \hat{\psi} (n)^3 \omega_3  \bigg)\\
 = & P\bigg(| \mathrm{Nbd}(B[0, n^{2\delta}], 3^{-1/2}) | 
 \le (1-\epsilon)^{2} \bigg(\frac{\pi}{\sqrt{2}} \bigg)^3   (n^{2\delta}(\log_2 n)^{-1})^{3/2} \omega_3  \bigg)\\
\le & C \exp(-(1-\epsilon)^{-2}(\log_2 n)).
\end{align*}
Considering $n_k=q^k$, we get by the first Borel-Cantelli lemma, that for fixed 
$q>1$ and $\epsilon>0$, 
\begin{align}\label{q2}
\liminf_{k \to \infty } \frac{ | \mathrm{Nbd}(B[0,n_k/3], n_k^{1/2-\delta}) | }{ \omega_3 \, 
\hat{\psi} (n_k)^3 }
\ge (1-\epsilon)^2
 \quad\text{ a.s.}  
\end{align}
With $n \mapsto |\mathrm{Nbd}(B[0,n/3], n^{1/2-\delta})|$ monotone increasing and 
$\hat{\psi}(q^k)/\hat{\psi}(q^{k-1}) \to 1$ as $k \to \infty$ followed by $q \downarrow 1$, we deduce from  
\eqref{q2} that 
\begin{align}\label{q2b}
\liminf_{n \to \infty } \frac{ | \mathrm{Nbd}(B[0,n/3], n^{1/2-\delta}) | }{  \omega_3 \,  
\hat{\psi}(n)^3 }
\ge 1 \quad\text{ a.s.}  
\end{align}
Next, recall the Poincar\'{e}-Carleman-Szeg\"{o}  theorem \cite{PS51}, that for any $r>0$, 
\begin{align}\label{q3}
\inf_{ |A|=\omega_3 r^3} \{\ca_B (A)\} =\ca_B (\BB(0,r) ) = r \, \ca_B(\BB(0,1)) = 2 \pi r \, . 
\end{align}
Recall that $\hat{h}_3(n)=\frac{2\pi}{3} \hat{\psi}(n)$. Hence, by 
\eqref{q3} for $A=\mathrm{Nbd}(B[0,n/3], n^{1/2-\delta})$, we have in view of (\ref{q2}) that 
\begin{align*}
 \liminf_{n\to \infty}\frac{ \ca_B \big( \mathrm{Nbd}(B[0,n/3], n^{1/2-\delta}) \big) }{ 3 \hat{h}_3(n)  } \ge 1 \,,
 \quad\text{ a.s.,}  
\end{align*}
which together with (\ref{q1}) yields the stated lower bound for the $\liminf$-\abbr{LIL}
of $\crn$ in $\Z^3$.


\section{\abbr{LIL} for \abbr{srw} on $\Z^4$: Proof of Theorem \ref{m2}}\label{sec:m2-d4}

Hereafter we consider for integers $0 \le a \le b \le c$, the random variables
\begin{equation}\label{def:Rab}
R_{a,b} := \ca(\R(a,b]) \,, \qquad V_{a,b,c} := R_{a,b} + R_{b,c} - R_{a,c} \ge 0 \,.
\end{equation}
Note that by shift invariance $R_{a,b} \stackrel{d}{=} R_{0,b-a}=R_{b-a}$ and
$R_{a,b}$ is independent of $R_{b,c}$ (due to the independence of increments).  In particular, 
for any increasing $\{n_k\}$ starting at $n_0=0$, one has the decomposition 
\begin{equation}\label{eq:decomp4}
R_{n_k} 
:= \sum_{j=1}^k U_j - \Delta_{n_k,k} \,,
\end{equation}
in terms of the independent variables $U_j :=R_{n_{j-1},n_j}$ and the sum of non-negative 
variables 
\begin{equation}\label{def:Delta}
\Delta_{n_k,k} = \sum_{j=1}^{k-1} V_{n_{j-1},n_j,n_k} = \sum_{j=1}^{k-1} V_{0,n_j,n_{j+1}} \,.
\end{equation}
\blue{We use different non-random sub-sequences $(n_k)$ for $d=4$, for $d=5$ and for $d \ge 6$.
Also, the subsequences used for the $\limsup$-\abbr{LIL} and for the $\liminf$ \abbr{LIL}-s be different.}
As we shall see, \blue{for such suitable $n_k$,} the fluctuation in $\sum_j U_j$ is negligible for the \abbr{LIL}-s of 
Theorem \ref{m2} where $E \Delta_{n_k,k} \approx h_4(n_k)$, the $\limsup$-\abbr{LIL} 
being due to the exceptional times with $\Delta_{n_k,k} = o(E \Delta_{n_k,k})$, while the 
$\liminf$-\abbr{LIL} is due to the exceptional times where 
$\Delta_{n_k,k} \approx \hat{h}_4(n_k) \gg E \Delta_{n_k,k}$. 
In contrast, we show in Section \ref{sec:m2-dge5} that $\Delta_{n_k,k}$ has a negligible effect
when $d \ge 5$, where the \abbr{LIL} follows the usual pattern for 
sums of independent variables (namely, that of the \abbr{LIL} for a Brownian motion).
\blue{We take a relatively small $k=O(\log_2 n_k)$ 
for the $\limsup$-\abbr{LIL} and $d=4$, with larger $k=O((\log n_k)^\alpha)$ for
the $\liminf$-\abbr{LIL} and for any $d \ge 5$.}

\subsection{The lower bound in the limsup-\abbr{LIL}}

We start with \blue{the statement of a key lemma about the   
\abbr{srw} on $\Z^4$} \red{(which is related to \cite[Thm. 2.2]{CR} 
in the $4D$ Brownian motion case).}
\begin{lem}\label{extail:lem}
Suppose $(S_m)$ and $(\tilde{S}_m)$ are two independent \abbr{srw} on $\Z^4$.
Let
\begin{align*}
X_n:=\frac{1}{n} \sum_{i,\ell \in [1,n]}
 G(S_i,\tilde{S}_\ell).
\end{align*}
Then, for some $C<\infty$ and any $p,n \in \N$,
 \begin{align}\label{exptail*}
E[X_n^p] \le C^p p!\,.
\end{align}
\end{lem}
One immediate consequence of \eqref{exptail*} is that for any $c < 1/C$,
 \begin{align}\label{exptail}
\sup_n E[e^{c X_n}] < \infty \,.
\end{align}

  
\blue{The proof of Lemma \ref{extail:lem} follows the same scheme as
that of \cite[Lemma 2]{LG94}. However, \cite{LG94} crucially relies on 
an explicit representation of the moments of the Brownian self-intersection local 
time via variances of linear combinations of Brownian increments.
Lacking any such tool here, our more involved proof of Lemma \ref{extail:lem} 
relies instead on the} following elementary bounds,
\red{where \eqref{bine1}-\eqref{bine3} implies also that \eqref{exptail*}-\eqref{exptail} hold
for $\int_0^1\int_0^1 |\beta_s - \tilde{\beta}_t|^{-2} dsdt$ and the independent, 
standard $4$-dimensional Brownian motions
$(\beta_s, s \ge 0)$, $(\tilde{\beta}_t, t \ge 0)$ (which is 
an improvement over the upper bound of \cite[Prop. 4.1]{As5}).}
\begin{lem}\label{mo22}
There exists $C<\infty$ such that for any $t>0$ and $x,y \in \RR^4$,  
\begin{align}\label{bine1}
&E[ | \beta_t-x |^{-2}]
\le C \min\{ t^{-1}, |x|^{-2} \} \le C t^{-1/2} |x|^{-1},\\ 
\label{bine2}
& E[|\beta_t-x |^{-1} |\beta_t - y|^{-1}]
\le  C t^{-1/2} (|x| \vee |y|)^{-1} \le 2 C t^{-1/2} |y-x|^{-1} ,\\ 
\label{bine3}
&E[ |\beta_t -x|^{-2} |\beta_t  - y |^{-1}]
\le 2 C t^{-1/2} |x|^{-1} |y-x|^{-1}.
\end{align}
Similarly, for $|\cdot|_+=|\cdot| \vee 1$, any $i\ge0$ and $x,y \in \Z^4$,  
\begin{align}\label{ine1}
&E[ | S_i-x |_+^{-2}]
\le C \min\{ |i|_+^{-1}, |x|_+^{-2} \} \le C |i|_+^{-1/2} |x|_+^{-1} ,\\
\label{ine2}
& E[|S_i-x |_+^{-1} |S_i - y |_+^{-1}]
\le  C |i|_+^{-1/2} (|x|_+ \vee |y|_+)^{-1} \le 2 C |i|_+^{-1/2} |y-x|_+^{-1} ,\\
\label{ine3}
&E[ |S_i -x|_+^{-2} |S_i  - y|_+^{-1}]
\le 2 C |i|_+^{-1/2} |x|_+^{-1} |y-x|_+^{-1}.
\end{align}
\end{lem}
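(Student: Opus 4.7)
The plan is to treat the six inequalities in parallel, first for Brownian motion, then to transcribe the argument to the random walk via the 4D local CLT. The engine is the bound \eqref{bine1}; both \eqref{bine2} and \eqref{bine3} reduce to it by Cauchy--Schwarz and a region decomposition.

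First, for \eqref{bine1} I would write $E[|\beta_t-x|^{-2}] = (2\pi t)^{-2}\int e^{-|w+x|^2/(2t)}\,|w|^{-2}dw$ after the change of variables $w=y-x$. To get the bound $Ct^{-1}$, drop the exponential factor, split into $|w|\le\sqrt t$ and $|w|>\sqrt t$, use the $4$-dimensional volume factor $r^3\,dr$ on the former (contributes $(2\pi t)^{-2}\cdot C t\lesssim t^{-1}$) and the crude $|w|^{-2}\le t^{-1}$ on the latter. To get $C|x|^{-2}$, split into $|w|\le |x|/2$ (where $|w+x|\ge|x|/2$ produces a decisive Gaussian factor $e^{-|x|^2/(8t)}$, which combined with the volume $|x|^{4}$ yields $\lesssim|x|^{-2}$) and $|w|>|x|/2$ (where $|w|^{-2}\le 4|x|^{-2}$ and the Gaussian density integrates to $1$). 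The chained inequality $\min\{t^{-1},|x|^{-2}\}\le t^{-1/2}|x|^{-1}$ is elementary.

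Next, \eqref{bine2} follows by Cauchy--Schwarz from \eqref{bine1}: the product $\min\{t^{-1/2},|x|^{-1}\}\min\{t^{-1/2},|y|^{-1}\}$ is bounded by $Ct^{-1/2}(|x|\vee|y|)^{-1}$ after a three-case analysis (according to whether $\sqrt t$ is smaller, between, or larger than $|x|$ and $|y|$), and the inequality $|y-x|\le|x|+|y|\le2(|x|\vee|y|)$ supplies the final comparison. For \eqref{bine3} I would split the integral with respect to the heat kernel into the regions $A=\{|w-x|\le|y-x|/2\}$ and $B=\RR^4\setminus A$. On $A$ the triangle inequality gives $|w-y|\ge|y-x|/2$, so one factor pulls out as $2|y-x|^{-1}$ and \eqref{bine1} handles the remaining $|w-x|^{-2}$ integral. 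On $B$ one factor of $|w-x|^{-1}$ is replaced by $2|y-x|^{-1}$, and then \eqref{bine2} applied to $|w-x|^{-1}|w-y|^{-1}$, together with $(|x|\vee|y|)^{-1}\le|x|^{-1}$, closes the bound.

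Finally, the discrete inequalities \eqref{ine1}--\eqref{ine3} require only translating the preceding arguments from integrals against the Gaussian heat kernel to sums against $P(S_i=w)$, invoking the $4$-dimensional local CLT bound $P(S_i=w)\le C|i|_+^{-2}$ (together with the trivial $\sum_wP(S_i=w)=1$). The truncation $|\cdot|_+:=|\cdot|\vee1$ is designed precisely to absorb the small-$i$, small-argument cases where the continuous estimates blow up; once those are handled, the same splitting into a ball around $x$ and its complement goes through verbatim. The only point requiring care --- and arguably the main obstacle --- is \eqref{bine3}: naively one would want to apply Cauchy--Schwarz with $|\beta_t-x|^{-4}$, but that exponent is not integrable in dimension four, which is why the decomposition into $A$ and $B$ and the use of the already-established \eqref{bine2} on $B$ is essential.
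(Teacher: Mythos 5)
Your treatment of the Brownian half is correct and, modulo packaging, is essentially the paper's argument: the paper proves \eqref{bine1} via the exact identity $E[|\beta_t-x|^{-2}]=2\pi^2\int_t^\infty \phi_s(x)\,ds$ and scaling, where you split the Gaussian integral directly (both work); \eqref{bine2} is the identical Cauchy--Schwarz step; and your $A$/$B$ decomposition for \eqref{bine3} is just the integrated form of the paper's pointwise bound $|y-x|\,|\beta_t-x|^{-2}|\beta_t-y|^{-1}\le |\beta_t-x|^{-1}\big(|\beta_t-x|^{-1}+|\beta_t-y|^{-1}\big)$ (you end with constant $4C$ instead of $2C$, which is immaterial since $C$ is not specified). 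Your observation that a naive Cauchy--Schwarz for \eqref{bine3} fails because $|\beta_t-x|^{-4}$ is not integrable in $d=4$ is also correct.

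The gap is in the discrete half. You propose to run the same splitting ``verbatim'' using only $P(S_i=w)\le C|i|_+^{-2}$ and $\sum_w P(S_i=w)=1$, but these two facts do not yield the bound $E[|S_i-x|_+^{-2}]\le C|x|_+^{-2}$ of \eqref{ine1} in the regime $|x|\gg\sqrt{i}$. In your continuous argument the ball around $x$ was controlled by the off-diagonal Gaussian factor $e^{-|x|^2/(8t)}$; with only the on-diagonal bound, the region $\{|w-x|\le |x|/2\}$ contributes up to $C i^{-2}\sum_{|z|\le|x|/2}|z|_+^{-2}\asymp i^{-2}|x|^2$, which for $|x|\asymp i$ is of order $1$, far larger than $|x|^{-2}$. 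So you need the off-diagonal part of the local CLT as well --- e.g.\ the bound the paper quotes from \cite[Thm.\ 1.2.1]{LA91}, $P(S_i=x)\le Ci^{-2}\big[e^{-2|x|^2/i}+(|x|^2\vee i)^{-1}\big]$ (the polynomial correction matters for the \abbr{srw}, whose deviations are not purely Gaussian at scale $|x|\asymp i$) --- or the paper's alternative route: $|S_i-x|_+^{-2}\le C\,G(S_i,x)$ by \eqref{gre1}, then $E[G(S_i,x)]=\sum_{\ell\ge i}P(S_\ell=x)$, estimated with \eqref{gre2}. Once \eqref{ine1} is in place, your transcription of \eqref{ine2}--\eqref{ine3} (Cauchy--Schwarz and the triangle inequality, which $|\cdot|_+$ still satisfies) is fine.
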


\begin{proof} Denoting by $\phi_s (x):=(2\pi s)^{-2}\exp(-\frac{|x|^2}{2s})$ the density
at $x$ of the Gaussian law of $\beta_s$, we get from \eqref{eq:green4} 
after change of variables that
\begin{align*}
E[ | \beta_t-x |^{-2}]
 =2\pi^2 E[ G_{\red B}( \beta_t, x )]
=2 \pi^2 \int_t^\infty \phi_s(x) ds = t^{-1} \varphi_1(|x|^2/t) = |x|^{-2} \varphi_2(t/|x|^2) 
\,,  
\end{align*}
for the finite decreasing functions $\varphi_1(r):=\frac{1}{2} \int_1^\infty u^{-2} e^{-r/(2u)} du$,
$\varphi_2(r):=\frac{1}{2} \int_r^\infty u^{-2} e^{-1/(2u)} du$. Thus, \eqref{bine1} holds for any
$C \ge \varphi_1(0) \vee \varphi_2(0)$. Next, by Cauchy-Schwarz and (\ref{bine1}),
\begin{align*}
E( |\beta_t-x |^{-1} |\beta_t - y |^{-1} )
\le  (E [ |\beta_t-x |^{-2} ])^{1/2} (E [ |\beta_t-y|^{-2} ])^{1/2}
\le C t^{-1/2} |y|^{-1} \,.
\end{align*}
Exchanging $x$ with $y$ yields the first inequality in \eqref{bine2}, whereby
the second inequality follows (as $|x-y| \le |x|+|y| \le 2 |x| \vee |y|$). Now, by the triangle inequality, for 
$\beta_t \ne x \ne y$,
\begin{align*}
|y-x|\, |\beta_t  -x|^{-2} |\beta_t- y|^{-1} \le |\beta_t -x|^{-1} (  |\beta_t-x|^{-1}+|\beta_t- y|^{-1}),
\end{align*}
so taking the expectation and using \eqref{bine1} and \eqref{bine2} to bound 
the \abbr{rhs}, results with \eqref{bine3}.
  
With $S_0=0$, clearly \eqref{ine1} holds at $i=0$, while for $i \ge 1$, 
recall \cite[Thm. 1.2.1 and 1.5.4]{LA91} that for some $C$ finite and any $x \in \Z^4$,
\begin{align}\label{gre2}
P(S_i=x) &\le  C i^{-2} \Big[ e^{-2|x|^2/i} + (|x|^2 \vee i)^{-1} \Big], \, \\
C^{-1} |x|_+^{-2} &\le G(0,x) \le C |x|_+^{-2} \,.
\label{gre1}
\end{align}
By \eqref{gre1},
\begin{align*}
E[ | S_i-x |_+^{-2}]
\le CE[ G( S_i, x )]
=C\sum_{\ell=i}^\infty P(S_\ell=x). 
\end{align*}
Further, for some $C$ finite and all $i \ge 1$, $x \in \Z^4$,
\begin{align*}
\sum_{\ell=i}^\infty \ell^{-2} (|x|^2 \vee \ell)^{-1} \le  C i^{-1} (|x|^2 \vee i)^{-1} \le  C \min\{i^{-1}, |x|_+^{-2} \} .
\end{align*}
Thus, in view of \eqref{gre2}, the same computation as for (\ref{bine1}) yields the first inequality of (\ref{ine1}). 
The second inequality of \eqref{ine1} follows (as $a \wedge b \le \sqrt{a b}$ for any $a,b>0$),
and since the strictly positive $|\cdot|_+$ satisfies the triangle inequality on $\Z^4$, we get 
first \eqref{ine2}, and then \eqref{ine3}, by the same reasoning that led to \eqref{bine2}
and \eqref{bine3}, respectively. 
\end{proof}

\begin{proof}[Proof of Lemma \ref{extail:lem}]
From (\ref{ine3}), for any $p \ge 2$, $1 \le s_1 \le \cdots \le s_p$ 
and $\{y_1,\ldots, y_p\} \subset \Z^4$,  
\begin{align*}
&E\Big[\prod_{i=1}^{p-1} |S_{s_i}-y_i|_+^{-2}  | S_{s_{p-1}}-y_p |_+^{-1} \Big]\\
= &     E\Big[ \prod_{i=1}^{p-2} |S_{s_i} -y_i |_+^{-2} |S_{s_{p-1}}-S_{s_{p-2}} -(y_{p-1}-S_{s_{p-2}})|_+^{-2}  | S_{s_{p-1}}-S_{s_{p-2}} -(y_p-S_{s_{p-2}} ) |_+^{-1} \Big]\\
\le &  C    | s_{p-1}-s_{p-2} |_+^{-1/2} |y_p-y_{p-1}|_+^{-1}  
E\Big[\prod_{i=1}^{p-2} |S_{s_i} -y_i|_+^{-2}  |S_{s_{p-2}}-y_{p-1} |_+^{-1} \Big] \,,
\end{align*}
where we set throughout $s_0=0$ and $y_0=0$. 
Thus, by induction on $p \ge 1$, 
\begin{equation}\label{eq:induct}
E\Big[\prod_{i=1}^{p-1} |S_{s_i} -y_i |_+^{-2}  | S_{s_{p-1}}-y_p |_+^{-1} \Big] 
\le C^{p-1} \prod_{i=1}^{p-1} |s_i-s_{i-1} |_+^{-1/2}  \prod_{i=1}^p  |y_i-y_{i-1}|_+^{-1} \,.
\end{equation}
Next, note that 
by \eqref{ine1}, for any $p \ge 1$, 
\begin{align*}
 E\Big[\prod_{i=1}^p |S_{s_i}-y_i|_+^{-2} \Big]
= &  E\Big[\prod_{i=1}^{p-1} |S_{s_i}-y_i|_+^{-2} |S_{s_p} - S_{s_{p-1}} -( y_p-S_{s_{p-1}} )|_+^{-2} \Big]\\
\le &  C | s_p-s_{p-1} |_+^{-1/2}  E\Big[\prod_{i=1}^{p-1} |S_{s_i} -y_i |_+^{-2}  | S_{s_{p-1}}-y_p |_+^{-1} \Big]
\,.
\end{align*}
Hence, setting $t_0=0$, in view of \eqref{eq:induct} and the independence of $(S_i)$ and $(\tilde{S}_i)$,
\begin{equation}\label{eq:bdG1}
E\Big[ \prod_{i=1}^p |S_{s_i}-\tilde{S}_{t_i}|_+^{-2} \Big] \le C^p \prod_{i=1}^p |s_i-s_{i-1} |_+^{-1/2} 
E \Big[ \prod_{i=1}^p |\tilde{S}_{t_i}-\tilde{S}_{t_{i-1}}|_+^{-1} \Big] \,.
\end{equation}
Suppose that $t_{\sigma(1)} \le \cdots \le t_{\sigma(p)}$ for some 
permutation $\sigma$ of $\{1,\ldots,p\}$. Then, conditioning on $(\tilde{S}_i, i \le t_{\sigma(p-1)})$, 
we get by (\ref{ine2}) and the independence of increments, that when $\sigma(p)=\ell$, 
\begin{align*}
E\Big[ \prod_{i=1}^p  |\tilde{S}_{t_i}-\tilde{S}_{t_{i-1}} |_+^{-1} \Big] 
\le C  |t_{\sigma(p)}-t_{\sigma(p-1)} |_+^{-1/2} 
E \Big[ \prod_{i=1}^{\ell-1} |\tilde{S}_{t_i}-\tilde{S}_{t_{i-1}} |_+^{-1} 
|\tilde{S}_{t_{\ell+1}}-\tilde{S}_{t_{\ell-1}}|_+^{-1} \prod_{i=\ell+2}^p 
|\tilde{S}_{t_i}-\tilde{S}_{t_{i-1}} |_+^{-1}
\Big].
\end{align*} 
\blue{Any permutation} $\sigma$ \blue{of $\{1,\ldots,p\}$ with $\sigma(p)=\ell$ must be} 
a bijection from $\{1,\dots, p-1 \}$ to $\{ 1,\ldots,\ell-1,\ell+1,\ldots, p\}$. \blue{We can thus
further bound the right-side of the preceding inequality,}
inductively according to the values of $\sigma(j)$, for $j=p-1,\ldots,1$, 
\blue{and thereby arrive at}
\begin{equation}\label{eq:bdG2}
E\Big[ \prod_{i=1}^p  |\tilde{S}_{t_i}-\tilde{S}_{t_{i-1}} |_+^{-1} \Big]  \le C^p  \prod_{j=1}^p |t_{\sigma(j)}-t_{\sigma(j-1)}|_+^{-1/2} \,, \qquad \blue{\sigma(0):=0} \,.
\end{equation}
Combining \eqref{eq:bdG1} and \eqref{eq:bdG2}, we conclude that for any non-decreasing $(s_i)$ 
and $(t_{\sigma(j)})$,
\begin{align}\label{mult}
E \Big[ \prod_{i=1}^p |S_{s_i} -\tilde{S}_{t_i}|_+^{-2} \Big]
\le C^{2p} \prod_{i=1}^p  |s_i-s_{i-1} |_+^{-1/2}  \prod_{j=1}^p |t_{\sigma(j)}-t_{\sigma(j-1)} |_+^{-1/2} \,.
\end{align}
\blue{We next bound $E[X_n^p]$ by \eqref{gre1} and enumerate over all words 
$(s_i)$ and $(t_i)$ of length $p$ with symbols from $[1,n]$, according to the
numbers $k$ and $k'$ of distinct symbols in each word. Recalling that $|0|_+=1$ and having} 
at most $k^p$ words of length $p$ composed of \blue{given (fixed)} $k$ distinguished symbols,  we 
\blue{thus} deduce from \eqref{mult} that for any $n,p \in \N$, 
\begin{align}\label{eq:bdXn}
&E[X_n^p] 
\le \frac{1}{n^p} C^p E \blue{\bigg[\sum_{s_i,t_i \in [1,n]} \prod_{i=1}^p |S_{s_i}-\tilde{S}_{t_i} |_+^{-2}  \bigg]}
\le C^{2p} \Big[\sum_{k=1}^{p} k^p n^{-(p-k)/2} J_{k,n} \Big]^2, \\
&J_{k,n}:=\frac{1}{n^k} \sum_{1 \le s_1 < \cdots < s_k \le n} \prod_{i=1}^k  \Big(\frac{s_i}{n}-\frac{s_{i-1}}{n}\Big)^{-1/2} \,. \nonumber
\end{align}
Considering $\theta_i=s_i/n$, we see that $\{J_{k,n}\}$ are for each $k \in \N$, the Riemann sums of
\begin{align}\label{eq:int-bd}
 J_k := \int_{0=\theta_0 < \theta_1 < \ldots < \theta_k \le 1}  \prod_{i=1}^k  (\theta_i-\theta_{i-1})^{-1/2} 
 d\theta_1 \cdots d\theta_k  \le C^k (k!)^{-1/2}  
\end{align}
(for the inequality, see e.g. \cite[proof of Lemma 2]{LG94}). 
Note that $J_k = (Q^k {\bf 1})(0)$ for
the positive linear operator $(Q f)(x) =\int_0^{1-x} y^{-1/2} f(x+y) dy$ on $C([0,1])$.
Setting $(y)_n=\lceil y n \rceil/n$,  we have that 
$J_{k,n} = (Q_n^k {\bf 1})(0)$ for the positive linear operators 
$(Q_n f)(x)=\int_0^{1-x} (y)_n^{-1/2} f((x)_n+(y)_n) dy$. It is easy to see that
$(Q_n f) \le (Q f)$ are both non-increasing whenever $f(\cdot)$ is non-increasing.
By induction on $k  \ge 0$, we thus have that $Q_n^k {\bf 1} \le Q^k {\bf 1}$, pointwise, 
so in particular $J_{k,n} \le J_k$
for any $k, n \in \N$. Further, $k! \ge (k/e)^k$ and $J_{k,n}=0$ unless $k \le n$. Hence,
in view of \eqref{eq:bdXn} and \eqref{eq:int-bd} we find that 
\[
E[X_n^p] \le C^{2p} \Big[\sum_{k=1}^{p} k^p k^{-(p-k)/2} J_{k} \Big]^2 \le C^{2p} 
\Big[ \sum_{k=1}^p k^{(p+k)/2} C^k (k/e)^{-k/2} \Big]^2 \le
p^2 C^{4p} e^p p^{p}  \,,
\]
and the uniform moment bounds \eqref{exptail*} on $X_n$ follow. 
\end{proof}

For any interval $I$, consider the range
$\R_I=\{S_i\}_{i \in I}$ and $\tilde{\R}_I=\{\tilde{S}_i\}_{i \in I}$ of independent \abbr{srw}-s.
\blue{Fixing $\alpha>0$,} let $n_\alpha:=n(\log n)^{-\alpha}$ and 
denoting by $\hat{P}$ and $\hat{\tau}_A$ the probability and the hitting time 
to a set $A$ by another independent \abbr{srw} $(\hat{S}_i)$, \blue{set} for 
$i\in [n_{2\alpha}, n-n_{2\alpha}]$, 
\begin{align}\label{def:gna}
g_{n,\alpha}(i) &:=  \quad 1_{\{S_i \notin \R(i,i+n_{2\alpha}]\}} 
\hat{P}^{S_i}(\hat{\tau}_{\R(i-n_{2\alpha}, i+n_{2\alpha}]} =\infty) ,
\end{align}
with $g_{n,\alpha}(i)=1$ for $i \in [0,n_{2\alpha}) \cup (n-n_{2\alpha},n]$,  
and $\tilde{g}_{n,\alpha}(i)$ defined analogously for the \abbr{srw} $(\tilde{S}_i)$.
\blue{In particular, for any $i \in [n_{2\alpha}, n - n_{2 \alpha}]$,}
\begin{align}
\blue{\ovg}_{n,\alpha} &:= E[
1_{\{S_{n_{2\alpha} } \notin \R(n_{2\alpha},2n_{2\alpha}] \}}
\hat{P}^{S_{n_{2\alpha} }}(\hat{\tau}_{\R_{2n_{2\alpha}}} =\infty)] = E[g_{n,\alpha}(i)] \,.
 \label{def:bgna}
\end{align}
The next lemma, whose proof is deferred to the end of this section, allows us to complete
the proof of the limsup-\abbr{LIL} lower bound for \abbr{srw} on $\Z^4$, \blue{by comparing} 
 \begin{align*}
Y_{n,m} := \sum_{\substack{i \in [1,n],\\ \ell \in [1,m]}}  g_{n,\alpha}(i)  G(S_i,\tilde{S}_\ell )\tilde{g}_{m,\alpha}(\ell) ,
\end{align*}
\blue{with the simpler to analyze}
\[
\underline{Y}_{n,m} := \ovg_{n,\alpha} \ovg_{m,\alpha}\sum_{\substack{i \in [1,n],\\ \ell \in [1,m]}} G(S_i,\tilde{S}_\ell )\,,
\]
\blue{whose moments we bound in Lemma \ref{extail:lem} (for $m=n$, see also 
\eqref{ee1} for the decay of $\ovg_{n,\alpha}$).}
\begin{lem}\label{moment*} 
We have that $EY_{n,m} \le  C \sqrt{n m}$ for some $C$ finite and all $m, n \in \N$, 
and if $n^{\epsilon} \le m \le n$, then for some $C=C(\epsilon,\alpha)<\infty$ and 
any $\epsilon>0$, $\alpha > 4$, 
 \begin{align}\label{moment*+}
E Y_{n,m} &\le C \sqrt{nm}(\log n)^{-2} \,, \\
E \Big[ (Y_{n,m}- \underline{Y}_{n,m})^2 \Big] 
&\le C nm (\log n)^{-\alpha/2}\,.
\label{moment*++}
\end{align}
\end{lem}
\begin{proof}[\blue{Proof of the limsup-\abbr{LIL} lower bound}]
Equipped with Lemma \ref{extail:lem} and Lemma \ref{moment*}, we derive the 
limsup-\abbr{LIL} lower bound for \abbr{srw} on $\Z^4$, \blue{and} with
$n \mapsto \crn$ having a similar structure as $|\R_n|$ for the \abbr{srw} on $\Z^2$,
we adapt the proof in \cite[Prop. 4.4]{BK02} of the limsup-\abbr{lil} lower-bound for the latter sequence.
Specifically, set $p=[(-\kappa + \log_3 n)/(\log 2)]$ with $\kappa<\infty$ large and 
$k = 2^p$ (so $k=\gamma \log_2 n$ for small $\gamma \le e^{-\kappa}$).
Centering both sides of \eqref{eq:decomp4} for $n_j=j m$, $m=n/k$ (assumed for simplicity 
to be integer), we have that for i.i.d. $U_j:=R_{(j-1)m,jm}$ and 
the non-random $\varphi_n := E R_n$,
\begin{align}\label{eq:decom}
\overline{R}_n
=k \varphi_{n/k} - \varphi_n + \sum_{j=1}^k \overline{U}_j -\sum_{j=1}^{k-1} V_{0,jm,(j+1)m} \,.
\end{align}
Further, denoting by $\theta$ the time shift $S_i \mapsto S_{i+1}$, we set 
\begin{align*}
\chi(A,B) & := \sum_{y\in A} \sum_{z\in B} P^y(\tau_{A\cup B}=\infty) G(y,z) P^z(\tau_{B}=\infty), 
\end{align*}
and for all $i\in (0,n]$,
\begin{align}\label{def:hni}
h_n(i) &: = 1_{\{S_i \notin \R(i,n] \}} \hat{P}^{S_i}(\hat{\tau}_{\R_n} =\infty) \le g_{n, \alpha}(i) 
\end{align}
(see \eqref{def:gna}), recalling from \eqref{def:Rab} and \cite[Prop. 1.6]{As5} that
\begin{align} 
0 \le V_{0,j m,(j+1)m} & \le \chi (\R_{jm}, \R(jm,(j+1)m])+ \chi (\R(jm,(j+1)m],\R_{jm}) \nonumber \\
\le &  2 \sum_{y \in \R_{jm}}  \sum_{z \in \R(jm,(j+1)m] } 
P^y(\tau_{ \R_{jm}}=\infty ) \; G(y,z) \; P^z(\tau_{ \R(jm,(j+1)m]}=\infty ) \nonumber \\
= & 2 \sum_{i=1}^{jm}  \sum_{\ell=1}^{m} 
h_{jm}(i) \; G(S_i,S_{jm+\ell}) \; h_m (\ell)\circ \theta_{jm} \nonumber \\
\le & 2 \sum_{i=1}^{jm}  \sum_{\ell=1}^{m} 
g_{jm,\alpha}(i) \; G(S_i,S_{jm+\ell}) \; g_{m,\alpha}(\ell)\circ \theta_{jm}
:= 2 W_j \,.
\label{eq:Wjbd}
\end{align}
Setting in addition 
\begin{align}\label{def:Wu}
& \underline{W}_j:=  \ovg_{jm,\alpha}  \ovg_{m,\alpha} \sum_{i=0}^{m-1}  
\sum_{\ell=1}^{m} G(S_{jm-i},S_{jm+\ell} ), \\
& \underline{\hat{W}}_j:=  \ovg_{jm,\alpha} \ovg_{m,\alpha} \sum_{i=m}^{jm-1}  
\sum_{\ell=1}^{m} G(S_{j m-i} ,S_{jm+\ell}),
\label{def:Wh}
\end{align}
we see that for any fixed $j$, 
\begin{align}\label{eq:Y-W}
 W_j- \underline{W}_j-\underline{\hat{W}}_j \stackrel{d}{=} Y_{jm,m}-\underline{Y}_{jm,m} \,. 
\end{align}
\blue{Next, following \cite{As5}, we let 
\[
\chi_n(i,j):=\chi(\R_n^{(i,2j-1)}, \R_n^{(i,2j)})+\chi(\R_n^{(i,2j)}, \R_n^{(i,2j-1)})
\quad \text{ for }  \quad \R_n^{(i,j)}:=\R[(j-1)2^{-i}n, j2^{-i}n]\,,
\]
and take} the expected value in \cite[Prop. 2.3]{As5}, \blue{to arrive at}
\begin{align}\label{eq:23As5**}
k \varphi_{n/k} - \varphi_n 
= \sum_{i=1}^p \sum_{j=1}^{2^{i-1}} E[ \chi_n(i,j)]-\sum_{i=1}^p \sum_{j=1}^{2^{i-1}} E[ \epsilon_n(i,j)] \,.
\end{align}
\blue{In \cite[Prop. 2.3]{As5} it is shown that for $p$ fixed, the non-negative right-most sum is at most 
$C (\log n)^2$. The same applies for our choice of growing $p=p(n)$. Indeed,}
as each $\epsilon_n(i,j)$ is bounded by the intersection of the ranges 
of two independent \abbr{srw}-s of length $n/2^i$, we have that $\max_{i,j} E[ \epsilon_n(i,j)] \le \log n$
 (see \cite[Section 3.4]{LA91}), and with at most $2^p \le C \log n$ such terms, we conclude that 
\begin{align}\label{eq:23As5}
\sum_{i=1}^p \sum_{j=1}^{2^{i-1}} E[ \chi_n(i,j)]- C(\log n)^2
\le k \varphi_{n/k}- \varphi_n
\le \sum_{i=1}^p \sum_{j=1}^{2^{i-1}} E[\chi_n(i,j)]. 
\end{align}
\blue{Recall} \cite[Prop. 6.1]{As5}, that 
\[
 \lim_{n \to \infty} \frac{2 (\log n)^2}{\pi^4 n}  E[\chi_{n}(1,1)] 
  = \int_{A_1^1} E [ G_\beta (\beta_s, \beta_t) ] ds dt \,, \quad \blue{ \text{for} \quad 
  A_1^1 = [0,2^{-1}) \times (2^{-1},1].}
\] 
\blue{Now,} using \eqref{eq:green4} \blue{(at $d=4$), with}   
$\int_{A_1^1}  |t-s|^{-1} dsdt=\log 2$ and $E |\beta_1|^{-2}=\frac{1}{2}$, \blue{we see that}
\[
\int_{A_1^1} E [ G_\beta (\beta_s, \beta_t) ] ds dt 
= \frac{1}{2\pi^2} E[|\beta_1|^{-2}] \int_{A_1^1} |t - s|^{-1} dt ds = \frac{\log 2}{4 \pi^2} \,.
\]
\blue{By definition it follows that} $E[\chi_n(i,j)]=E[\chi_{n'_i} (1,1)]$ for $n'_i : =n 2^{1-i}$ 
and all $i$, $j$. \blue{So, in view of the 
preceding, we deduce that for any $p = o(\log n)$,}  as $n \to \infty$,
\begin{align*}  
 \blue{\max_{i \le p} \Big\{ \big|} \frac{2 (\log n_i')^2}{\pi^4 n_i'}  E[\chi_{n'_i}(1,1)]  - \frac{\log 2}{4 \pi^2} 
 \blue{\big| \Big\} \to 0} \,.
\end{align*} 
\blue{Recall also} (see \eqref{h4:def}), that 
\[
p = (1+o(1)) \frac{\log_3 n}{\log 2} \,, \qquad 
h_4(n)= \frac{\pi^2}{8} \frac{ n \log_3 n}{ (\log n)^2} \,.
\]
It thus follows that 
\begin{align} \label{eq:chi-exp}
 \sum_{i=1}^p \sum_{j=1}^{2^{i-1}} E[\chi_n(i,j)]
 =(1+o(1)) p  \frac{\log 2}{4 \pi^2} \frac{\pi^4 n} {2 (\log  n)^2} = (1+o(1)) h_4(n) \,,
\end{align}
and combining \eqref{eq:23As5} and \eqref{eq:chi-exp} we arrive at
\begin{align}\label{log3}
k \varphi_{n/k}- \varphi_n = (1+o(1)) h_4(n) .
\end{align}

In view of \eqref{eq:decom}, \eqref{eq:Wjbd} and \eqref{log3}, we get our limsup-\abbr{lil} lower bound, precisely
as in \cite[Proof of Prop. 4.4]{BK02}, once we find for any $\varepsilon>0$, 
constants $c_1<\infty$, $c_2>0$ and for any \blue{$k=2^p$, $m=n/k$, $p$} as above, \blue{some} events $G_k$ 
such that $P(G_k) \ge \frac{1}{4} c_2^k$ and
\begin{align}\label{def:Bj}
G_k & \subseteq \bigcap_{j=1}^k \Big\{ \overline{U}_j \ge - \frac{c_1 m}{(\log m)^2} \Big\}  \blue{\eqqcolon} \bigcap_{j=1}^k B_j \,, \\
\label{bd:sumHj}
G_k & \blue{\subseteq \Big\{} \sum_{j=1}^{k-1} W_j \le 3  \varepsilon \frac{n \log_3 n}{(\log m)^2} \Big\} \,.
\end{align}
To this end, it suffices to construct events $F_k$ such that for some $c_3 < \infty$,
\begin{equation}\label{eq:Fk}
P(F_k) \ge c_2^k\,, \qquad \qquad F_k \subseteq 
\Big\{ \max_{j < k} \{ \underline{\hat{W}}_j \} \le \frac{c_3 m}{(\log m)^2} \Big\} \blue{\bigcap_{j=1}^k B_j} \,.
\end{equation}
Indeed, we shall see that $P(\C_i) \le \frac{1}{4} c_2^k$ for 
$k \le \gamma \log_2 n$, 
$\gamma>0$ small and $n \to \infty$, where 
\begin{align*}
\C_1 := \bigg\{\sum_{j\text{ odd}} \underline{W}_j
> & \varepsilon \frac{n \log_3 n}{(\log m)^2} \bigg\},
\qquad \qquad \quad 
\C_2 :=\bigg\{\sum_{j\text{ even}} \underline{W}_j > \varepsilon \frac{n \log_3 n}{(\log m)^2} \bigg\},\\
\C_3 & := \bigg\{ \max_{j< k} \{ W_j - \underline{W}_j-\underline{\hat{W}}_j \} > \frac{m}{(\log m)^2} 
\bigg\}\,.
\end{align*}
Taking $G_k := F_k \cap_{i=1}^3 \C_i^c$ this would imply that $P(G_k) \ge \frac{1}{4} c_2^k$
for large $k$, and it is easy to check, as stated, that \blue{both \eqref{def:Bj} and} \eqref{bd:sumHj} then hold \blue{for} such $G_k$.

Next, utilizing the union bound, \eqref{eq:Y-W}, Markov's inequality and 
\eqref{moment*++}, we get that  
\begin{align*}
P(\C_3) & \le 
\sum_{j=1}^{k-1} P\bigg( W_j - \underline{W}_j -\underline{\hat{W}}_j \ge \frac{m}{(\log m)^2}\bigg)
= \sum_{j=1}^{k-1} P\bigg( Y_{jm,m} - \underline{Y}_{jm,m} \ge \frac{m}{(\log m)^2} \bigg)
\\
& \le  \frac{(\log m)^4}{m^2} \sum_{j=1}^{k-1} E \Big[ (Y_{jm,m}- \underline{Y}_{jm,m})^2 \Big]
\le  C (\log m)^{4-\alpha/2} \sum_{j=1}^{k-1} j \le C k^2 (\log m)^{4-\alpha/2} \,. 
\end{align*} 
In particular, for $\alpha>8 + 2\gamma \log (1/c_2)$, $k$ as above and $n=m k \ge n_0$, 
the preceding bound implies that $P(\C_3) \le \frac{1}{4} c_2^k$. Turning to deal with $\underline{W}_j$
and $\underline{\hat{W}}_j$, upon expressing \eqref{def:bgna} via the independent \abbr{srw}-s
$\hat{S}_i$, $S^+_i := S_{n'+i}-S_{n'}$ and $S^-_i := S_{n'-i}-S_{n'}$, at $n'=n_{2 \alpha}$, 
it follows from \cite[(1.4)]{As3} that 
\begin{align}\label{ee1}
\ovg_{n,\alpha} 
& = E[
1_{\{0 \notin \R^+_{n'}\}}
\hat{P}^{0}(\hat{\tau}_{\R^+_{n'} \cup \R^-([0,n'-1])} =\infty)] \nonumber \\
& = P(0 \notin \R^+_{n'}, \hat{R}_\infty \cap (\R^+_{n'} \cup \R^-([0,n'-1]) ) = \emptyset) 
= (1+o(1))\frac{\pi^2}{8} (\log n)^{-1}
\end{align} 
(note that $\log n' = (1+o(1)) \log n$). In view of \eqref{def:Wu}, we note that 
$\{m^{-1} (\ovg_{m,\alpha})^{-2} \underline{W}_j \}$ are, for 
odd $j$, independent copies of $X_m$ of Lemma \ref{extail:lem} 
(except for now including also $\ell=0$ in $X_m$). It thus follows
from \eqref{exptail} and \eqref{ee1} that for some $c>0$, and all $k$, $m$,
\[
E\Big[ \exp\Big( c \,m^{-1} (\log m)^2 \sum_{j\text{ odd}} \underline{W}_j\Big) \Big] \le \exp(k/c) \,.
\]
Hence, for $n = m k$, one has by Markov's inequality that 
\begin{align*}
P(\C_1 ) \le \exp(-\varepsilon c k \log_3 n) \exp(k/c) 
\end{align*} 
decays as $n \to \infty$, faster than $\frac{1}{4} c_2^k$. By the same reasoning, this 
applies also for $\C_2$. 

Finally, in view of \eqref{gre1}, \eqref{def:Wh} and \eqref{ee1}, for some $C<\infty$ and any $m$, $j$,
\begin{equation}\label{eq:wh-bd}
\underline{\hat{W}}_j \le \frac{C m^2}{(\log m)^2} \sum_{s=1}^{j-1} {\rm dist}(\R((s-1) m,s m]), \R((jm,(j+1)m]))^{-2} \,.
\end{equation}
As  in \cite[Proof of Prop. 4.4]{BK02}, fixing
a unit vector $\bf u$, we let $F_j:=\cap_{i=1}^j (A_i \cap B_i)$, \blue{while taking here $B_i$ of
\eqref{def:Bj}, and}
\begin{align*}
A_i :=\Big\{ S_{i m}\subset \BB(i \sqrt{m} \, {\bf u}, \sqrt{m}/8),  \,\,
\R((i-1)m,im]) \subset \BB\Big( (i-\frac{1}{2}) \sqrt{m} \, {\bf u}, \frac{3}{4} \sqrt{m}\Big) \Big\}.
\end{align*}
The event $F_k$ guarantees that \blue{for any $s<j$,} the distance of $\R((s-1) m,s m])$ from
$\R((jm,(j+1)m])$ be at least $(j-s-1/2)\sqrt{m}$, so \eqref{eq:wh-bd} results
with the \abbr{rhs} of \eqref{eq:Fk} (for $c_3 = C \sum (r-1/2)^{-2}$ finite).
As for the \abbr{lhs} of \eqref{eq:Fk}, \blue{recall \cite[Thm. 1.2]{As5} that 
$\{\frac{(\log m)^2}{m} \overline{R}_m\}$ converges in law, hence is a uniformly tight sequence.
In particular,} for any $\delta>0$ there exists $c_1=c_1(\delta)$ finite, such that
$P(B_1^c) \le \delta$ \blue{for $B_1$ of \eqref{def:Bj},} uniformly in $m$. Further, $\{B_j, j \ge 1\}$ are i.i.d. and 
by the invariance principle, there exists $c_2>0$ such that,
\[
\lim_{m \to \infty} \inf_{S_0 \in \BB(0, \sqrt{m}/8)} \{ P^{S_0} (A_1) \} =
\inf_{\beta_0 \in \BB(0,1/4)} \{ 
P(\, |\beta_1-2 {\bf u}| < 1/4, \sup_{t \in [0,1]} |\beta_t - {\bf u} | < 3/2) \} \ge 2 c_2 \,.
\]
As $F_j$ is measurable on $\sigma(S_i, i \le jm)$, by the Markov property of the \abbr{srw} 
and its independent, stationary increments, for any $j \ge 1$, 
\[
P(A_j  \cap B_j | F_{j-1} ) \ge \inf_{S_0 \in \BB(0, \sqrt{m}/8)} \{  P^{S_0}(A_1) \} - P(B_1^c) \ge c_2 \,,
\]
provided $\delta>0$ is small enough and $m \ge m_0$ finite, thereby establishing the \abbr{lhs}
of \eqref{eq:Fk}.
\end{proof}

\medskip
We conclude this sub-section by proving Lemma \ref{moment*}. \blue{To this end, note that}
\begin{align}\label{eq:gG-dec}
E \Big[ (Y_{n,m}-\underline{Y}_{n,m})^2 \Big]
 = & \frac{1}{2} \sum_\pi \sum_{\substack{(i_1,i_2)\in [1,n]^2,\\(\ell_1,\ell_2) \in [1,m]^2}}  E [g\cdot \G]\,,
\end{align}
where the sum is over the two permutations $\pi$ of $\{1,2\}$ and 
\begin{align}\label{dfn:g}
g &:=\Big( g_{n,\alpha}(i_1)\tilde{g}_{m,\alpha}(\ell_{\pi_1}) -\ovg_{n,\alpha} \ovg_{m,\alpha} \Big) 
\Big( g_{n,\alpha}(i_2)  \tilde{g}_{m,\alpha}(\ell_{\pi_2}) -\ovg_{n,\alpha} \ovg_{m,\alpha} \Big) 
\,,\\
\G &:= G(S_{i_1},\tilde{S}_{\ell_{\pi_{1}}}) G(S_{i_2},\tilde{S}_{\ell_{\pi_{2}}}) \,.
\label{dfn:sG}
\end{align}
Setting for $\alpha>0$,
\[
I_\alpha (n):=[n_\alpha, n-n_\alpha]^2 \cap \{(i,j): j-i \ge n_{\alpha} \} \,,
\]
\blue{the key to \eqref{moment*++} is to bound $|E[g \cdot \G]|/E[\G]$ uniformly 
over $(i_1,i_2) \in I_\alpha(n)$ and $(\ell_1,\ell_2) \in I_\alpha(m)$.} 
For $(i_1,i_2) \in I_\alpha(n)$, $n'=n_{2\alpha}$, \blue{we will show that 
the contribution from the complement of 
\begin{align*}
H^{(n)}_{i_1} &:=\{|S_{i_1}-S_{i_1-n'}| \le \sqrt{n'} \log n' \} \,,\\
H_{i_1,i_2}^{(n)}&:=H^{(n)}_{i_1} \cap \{ |S_{i_2}-S_{i_2-n'}+S_{{i_1}+n'}-S_{i_1}| \le \sqrt{n'} \log n'\}\,,
\end{align*}
is negligible and the same applies for the analogous events} 
$\widetilde{H}^{(m)}_{\ell_1}$, $\widetilde{H}^{(m)}_{\ell_1,\ell_2}$ 
defined in terms of the \abbr{srw} $(\tilde{S}_\ell)$, 
$(\ell_1,\ell_2)\in I_\alpha(m)$ and $m'=m_{2\alpha}$. \blue{Further, from
\eqref{def:bgna} it follows that $E[g]=0$ for such $(i_1,i_2)$ and $(\ell_1,\ell_2)$, allowing 
us to instead bound (in terms of $E[\G]$), the value of 
\[
\big| E[ g \,  \G 1_{H^{(n)}_{i_1,i_2} \cap \widetilde{H}^{(m)}_{\ell_1,\ell_2}}] -
 E[ g 1_{H^{(n)}_{i_1,i_2} \cap \widetilde{H}^{(m)}_{\ell_1,\ell_2}}] E[\G]\big|\,.
 \]
Decomposing the events $H^{(n)}_{i_1}$ and $H^{(n)}_{i_1,i_2}$ as}
\begin{align*}
H^{(n)}_{i_1} & = \bigcup_{|u| \le \sqrt{n'} \log n'} H^{(n')}_{i_1}(u) \,, \qquad
H_{i_1,i_2}^{(n)} =\bigcup_{|u|,|v| \le \sqrt{n'} \log n'} H^{(n')}_{i_1,i_2}(u,v), \\
H^{(n')}_{i_1} (u) & := \{S_{i_1}-S_{i_1-n'}=u\} \,, \qquad 
H^{(n')}_{i_1,i_2}(u,v) := H^{(n')}_{i_1}(u) \cap 
\{ S_{i_2}-S_{i_2-n'}+S_{{i_1}+n'}-S_{i_1} =v\}\,,
\end{align*}
and such decomposition for 
$\widetilde{H}^{(m)}_{\ell_1,\ell_2}$,
\blue{we show in the sequel that given
$H^{(n')}_{i_1,i_2}(u,v) \cap \widetilde{H}^{(m')}_{\ell_1,\ell_2}(\tilde{u},\tilde{v})$,
makes $\G$ independent of $g$, whereby the following estimates shall be utilized.}
\begin{lem}\label{moment-}
Fix $\alpha > 2$, $\epsilon >0$ and a permutation $\pi$ of $\{1,2\}$. Then,  for $n^\epsilon \le m \le n$,
\begin{align}\label{bd:F1}
F_1(u,\tilde{u}) := & E[G(S_{i_1},\tilde{S}_{\ell_1})\, |\, H^{(n')}_{i_1}(u) \cap \blue{\widetilde{H}}^{(m')}_{\ell_1}(\tilde{u}) ] =
(1+O((\log n)^{2-\alpha})) E[G(S_{i_1},\tilde{S}_{\ell_1})] \,, \\
F_2(u,v,\tilde{u},\tilde{v}) := & E[G(S_{i_1},\tilde{S}_{\ell_{\pi_{1}}}) G(S_{i_2},\tilde{S}_{\ell_{\pi_{2}}}) | 
H^{(n')}_{i_1,i_2}(u,v) \cap \widetilde{H}^{(m')}_{\ell_1,\ell_2}(\tilde{u},\tilde{v})] \nonumber \\
 = & (1+O((\log n)^{2-\alpha}))E[G(S_{i_1},\tilde{S}_{\ell_{\pi_{1}}})G(S_{i_2},\tilde{S}_{\ell_{\pi_{2}}})]\,,
 \label{bd:F2}
\end{align}
uniformly over $(i_1,i_2) \in I_\alpha(n)$, 
$(\ell_1,\ell_2) \in I_\alpha(m)$,  $|u|,|v| \le \sqrt{n'} \log n'$  
and $|\tilde{u}|,|\tilde{v}| \le \sqrt{m'} \log m'$.
\end{lem}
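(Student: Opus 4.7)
\textbf{Strategy and reduction for \eqref{bd:F1}.} The plan is to express both the conditional and unconditional expectations as sums of $4$D \abbr{srw} transition probabilities and then compare them via the local \abbr{clt} (\abbr{lclt}). Because $(S_m)$ and $(\tilde S_m)$ have independent increments, the event $H^{(n')}_{i_1}(u)$ fixes $S_{i_1}-S_{i_1-n'}=u$ without constraining the pre-time-$(i_1-n')$ or post-time-$i_1$ portions of $(S_m)$, so under $H^{(n')}_{i_1}(u)\cap \widetilde H^{(m')}_{\ell_1}(\tilde u)$ one has $S_{i_1} = V + u$ and $\tilde S_{\ell_1} = \tilde V + \tilde u$, with $V, \tilde V$ independent \abbr{srw}-s of $i_1-n'$ and $\ell_1-m'$ steps, both independent of the conditioning event. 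Writing $G(x)=\sum_{k\ge 0}P(\hat S_k=x)$ for an independent \abbr{srw} $\hat S$, and using that sums of independent \abbr{srw}-s are distributed like a single \abbr{srw} of the total length together with the symmetry $-V\stackrel{d}{=}V$, one arrives at
\[
F_1(u,\tilde u) = \sum_{k\ge 0} P\bigl(\check S_{k+(i_1-n')+(\ell_1-m')}=\tilde u - u\bigr), \qquad E[G(S_{i_1},\tilde S_{\ell_1})] = \sum_{k\ge 0} P(\check S_{k+i_1+\ell_1}=0).
\]

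\textbf{Comparison via \abbr{lclt}.} By the \abbr{lclt} for $4$D \abbr{srw} (see \cite[Thm.~1.2.1]{LA91}), $P(\check S_j=z)=\phi_{j/4}(z)(1+O(1/j))$ on $|z|^2 \le j\log j$, with Gaussian tails elsewhere. Both sums are dominated by $j\asymp n$, and a direct calculation gives
\[
\frac{F_1(u,\tilde u)}{E[G(S_{i_1},\tilde S_{\ell_1})]} = 1 + O\bigl(|w|^2/n\bigr) + O(\Delta/n) + O(n^{-1}),
\]
where $w:=\tilde u - u$ satisfies $|w|\le 2\sqrt{n'}\log n' = O(\sqrt n (\log n)^{1-\alpha})$ and the length mismatch is $\Delta:=n'+m' = O(n(\log n)^{-2\alpha})$. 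All three errors are dominated by $(\log n)^{2-\alpha}$, proving \eqref{bd:F1}. Boundary contributions from small $j$ and from exceptional values of $V,\tilde V$ contribute at most $O(1)$, which is negligible against $E[G(S_{i_1},\tilde S_{\ell_1})]\asymp n^{-1}\log n$ (by \eqref{ine2}).

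\textbf{Two-point estimate \eqref{bd:F2}.} Since $(i_1,i_2)\in I_\alpha(n)$ forces $i_2-i_1\ge n_\alpha > 2n'$, the three windows $[i_1-n',i_1]$, $[i_1,i_1+n']$ and $[i_2-n',i_2]$ are pairwise disjoint. Setting $W_1 := S_{i_1+n'}-S_{i_1}$ and $W_2 := S_{i_2}-S_{i_2-n'}$ (independent $n'$-step \abbr{srw} increments), the conditioning event constrains only $W_1+W_2=v$ beyond $S_{i_1}-S_{i_1-n'}=u$, so under the conditioning
\[
S_{i_1} = V+u, \qquad S_{i_2} = V+u+Y+W_1,
\]
with $V, Y$ independent \abbr{srw}-s of $i_1-n'$ and $i_2-i_1-2n'$ steps respectively, and $W_1$ independent of $V, Y$ with conditional law concentrated near $v/2$ at scale $\sqrt{n'}$ (again by \abbr{lclt}). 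An analogous decomposition holds for $\tilde S_{\ell_1},\tilde S_{\ell_2}$. Substituting into $G(S_{i_1},\tilde S_{\ell_{\pi_1}})G(S_{i_2},\tilde S_{\ell_{\pi_2}})$, averaging over the free walk pieces, and applying the \abbr{lclt} comparison from the previous paragraph separately to each of the two Green's-function factors, yields the same multiplicative error $1+O((\log n)^{2-\alpha})$, since all four shifts $u,v,\tilde u,\tilde v$ satisfy the same size bound $O(\sqrt n(\log n)^{1-\alpha})$.

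\textbf{Main obstacle.} The principal technical difficulty lies in the uniform control of the \abbr{lclt} error across both the summation in $j$ and the integration over the free walk pieces: regions where the walk is atypically small (so $|\check S_j|\ll\sqrt j$) must be handled separately, using the pointwise bound $G(\cdot)\le C|\cdot|_+^{-2}$ from \eqref{gre1} and the Gaussian tail of \abbr{lclt}. For \eqref{bd:F2} one must further verify that cross-terms from the two Green's-function factors combine additively rather than multiplicatively, which reduces to showing that none of the shifts $u,v,\tilde u,\tilde v$ dominates any of the typical inter-walk distances $|S_{i_r}-\tilde S_{\ell_{\pi_r}}|$, $r=1,2$, at their natural scale $\sqrt n$.
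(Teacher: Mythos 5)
Your treatment of \eqref{bd:F1} is essentially the paper's argument: the conditioning only freezes the window increment, so the conditional pair is an exact shift of a time-shortened pair of independent walks, and both the conditional and the unconditional expectations become sums $\sum_j p_j(\cdot)$ of transition probabilities which are compared through the local CLT. Two points need repair, one of them at exactly the critical scale. The sums are not ``dominated by $j\asymp n$'': they are dominated by $j$ near the lower limit $t_1=i_1+\ell_1-n'-m'$, which over $I_\alpha(n)\times I_\alpha(m)$ can be as small as $(1+o(1))\,n_\alpha=n(\log n)^{-\alpha}$. The correct relative error coming from the shift $w=\tilde u-u$ is thus $O(|w|^2/t_1)\le O\big(n'(\log n)^2/n_\alpha\big)=O((\log n)^{2-\alpha})$ --- precisely the rate claimed in the lemma --- and not your $O(|w|^2/n)=O((\log n)^{2-2\alpha})$. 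Also, because the walk has period $2$ one should compare $p_j(w)+p_{j+1}(w)$ with $p_j(0)+p_{j+1}(0)$ (as the paper does), and your remark that boundary terms ``contribute at most $O(1)$, which is negligible against $E[G]\asymp n^{-1}\log n$'' is backwards (and the stated order of $E[G(S_{i_1},\tilde S_{\ell_1})]$ is not correct in general); no such excision is needed once the two exact sums are compared termwise. These are fixable.

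For \eqref{bd:F2} there are genuine gaps. First, your conditional representation is wrong: on $H^{(n')}_{i_1,i_2}(u,v)$ one has exactly $S_{i_2}=S_{i_1-n'}+u+v+(S_{i_2-n'}-S_{i_1+n'})$, since the constrained increments satisfying $W_1+W_2=v$ cancel out of $S_{i_2}$; there is no residual $W_1$-randomness ``concentrated near $v/2$ at scale $\sqrt{n'}$''. Second, and more seriously, you cannot ``apply the comparison separately to each of the two Green's-function factors'': under the conditioning the two factors share the segment of $S$ up to time $i_1-n'$ and the segment of $\tilde S$ up to time $\ell_1-m'$, and the expectation of the product is of order $(i_1(i_2-i_1)\ell_1(\ell_2-\ell_1))^{-1/2}$ (cf.\ \eqref{eq:bd-cG}), which is not the product of the single-factor expectations; the lemma asserts a multiplicative $1+O((\log n)^{2-\alpha})$ for the joint quantity, so the comparison must be performed on a joint representation. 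Your proposed resolution of the ``main obstacle'' (the shifts do not dominate the typical inter-walk distances) is only a first-order bound: the shifts can be as large as $3\sqrt{n'}\log n'$ while the relevant distances are only guaranteed to be of order $\sqrt{n_\alpha}$, so this argument yields a relative error $O((\log n)^{1-\alpha/2})$, which exceeds the required $(\log n)^{2-\alpha}$ for every $\alpha>2$. The needed quadratic gain comes from symmetry: the paper rewrites the conditional and unconditional quantities as explicit sums of transition probabilities indexed by times $j\ge t_1\ge n_\alpha/2$ and uses the evenness of $p_j$, via $\big|\frac{p_j(u)+p_{j+1}(u)}{p_j(0)+p_{j+1}(0)}-1\big|\le C|u|^2/j\le C'(\log n)^{2-\alpha}$, so that the first-order terms in the shifts cancel. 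Without an argument of this type, carried out on the joint (not factorized) expression, \eqref{bd:F2} is not established.
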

\begin{proof} 
For $(i_1,i_2) \in I_\alpha(n)$, the law of $(S_{i_1},S_{i_2})$ given 
$H^{(n')}_{i_1,i_2}(u,v)$ is as $(u+S^{(1)}_{i_1-n'},u+v+S^{(1)}_{i_2-3n'})$ 
for an independent \abbr{srw} $S^{(1)}_i$. Similarly, when $(\ell_1,\ell_2) \in I_\alpha(m)$, 
the law of $(\tilde{S}_{\ell_1},\tilde{S}_{\ell_2})$ given $\widetilde{H}^{(m')}_{\ell_1,\ell_2}(\tilde{u},\tilde{v})$
is as
$(\tilde{u}+\tilde{S}^{(1)}_{\ell_1-m'},\tilde{u}+\tilde{v}+\tilde{S}^{(1)}_{\ell_2-3m'})$.
Consequently,
\begin{align*}
F_1(u,\tilde{u}) &= E[G(u+S_{i_1-n'},\tilde{u}+\tilde{S}_{\ell_1-m'})] \,, \\
F_2(u,v,\tilde{u},\tilde{v}) &= 
\left\{\begin{array}{ll}
E[G(u+S_{i_1-n'},\tilde{u}+\tilde{S}_{\ell_1-m'})
G(u+v+S_{i_2-3n'},\tilde{u}+\tilde{v}+\tilde{S}_{\ell_2-3m'})\big] , & \;\textrm{if}\;\;  \pi_1=1, \\
E[G(u+S_{i_1-n'},\tilde{u}+\tilde{v}+\tilde{S}_{\ell_2-3m'})
G(u+v+S_{i_2-3n'},\tilde{u}+\tilde{S}_{\ell_1-m'})\big], & \;\textrm{if}\;\; \pi_1=2.
\end{array}\right.
\end{align*}
Note that for some $C=C(\epsilon)$ finite, $c=c(\epsilon)>0$, any $m \ge n^{\epsilon}$ and 
$(\ell_1,\ell_2)$, 
\begin{equation}\label{eq:rough-tail}
P((H^{(m)}_{\ell_1})^c) \le P( (H^{(m)}_{\ell_1,\ell_2})^c ) \le 
2 P(|S_{2m'}| > \sqrt{m'} \log m') \le C e^{- c (\log n)^2} \,,
\end{equation}
with the same bound applying also for $P((H^{(n)}_{i_1,i_2})^c)$. 
Now, by \eqref{gre1} and \eqref{mult} (at $p=1,2$), 
\begin{align}\label{eq:bd-G}
E[G(S_{i},\tilde{S}_{\ell})] &\le C i^{-1/2} \ell^{-1/2} \,, \\
E[G(S_{i_1},\tilde{S}_{\ell_{\pi_{1}}})G(S_{i_2},\tilde{S}_{\ell_{\pi_{2}}})] &\le 
C i_1^{-1/2}(i_2-i_1)^{-1/2} \ell_1^{-1/2}(\ell_2-\ell_1)^{-1/2} \,,
\label{eq:bd-cG}
\end{align}
with the \abbr{lhs} of \eqref{eq:bd-G} and \eqref{eq:bd-cG} being the 
expected values of $F_1(\cdot)$ and $F_2(\cdot)$ according to the joint law of the 
corresponding \abbr{srw} increments (for independent \abbr{srw} $S_i$ and $\tilde{S}_\ell$).
In view of \eqref{eq:rough-tail}, it thus suffices to bound the maximum fluctuation 
of $F_s(\cdot)$, $s=1,2$ over $|u|,|v| \le \sqrt{n'} \log n'$  and 
$|\tilde{u}|,|\tilde{v}| \le \sqrt{m'} \log m'$, by $C (\log n)^{2-\alpha}$ times the \abbr{rhs}
of \eqref{eq:bd-G} and \eqref{eq:bd-cG}, respectively.
To this end, since $F_1(\cdot)$ depends only on $u-\tilde{u}$ and $F_2(\cdot)$ depends only on 
$u-\tilde{u}$ and $v-\tilde{v}$ if $\pi_1=1$,
or on $u+v-\tilde{u}$ and $v+\tilde{v}$ if $\pi_1=2$, we may \abbr{wlog} fix 
$\tilde{u}=\tilde{v}=0$ and consider the maximum fluctuation of
\begin{align*}
F_1(u) &= E[G(u+S_{i_1-n'},\tilde{S}_{\ell_1-m'})] \,,\\
F_2(u,v) &= 
\left\{\begin{array}{ll}
E[G(u+S_{i_1-n'},\tilde{S}_{\ell_1-m'})
G(v+S_{i_2-3n'},\tilde{S}_{\ell_2-3m'})\big] , & \;\textrm{if}\;\;  \pi_1=1, \\
E[G(u+S_{i_1-n'},\tilde{S}_{\ell_2-3m'})
G(v+S_{i_2-3n'},\tilde{S}_{\ell_1-m'})\big], & \;\textrm{if}\;\; \red{\pi_1=2.}
\end{array}\right.
\end{align*}
over $|u|, |v| \le 3 \sqrt{n'} \log n'$. Further, with both $n_{\alpha}/n'$ and
$m_{\alpha}/m'$ diverging (as $(\log n)^\alpha$), it follows that uniformly over $(i_1,i_2) \in I_\alpha(n)$,
$(\ell_1,\ell_2) \in I_\alpha(m)$ and $m \ge n^\epsilon$, the \abbr{rhs} of \eqref{eq:bd-G} 
and \eqref{eq:bd-cG} also bound $F_1(0)$ and $F_2(0,0)$, respectively. Consequently, it 
suffices to show that for some $C$ finite and all $|u|, |v| \le 3 \sqrt{n'} \log n'$,
\begin{align}\label{eq:F1-bd}
|F_1(u)-F_1(0)| &\le C (\log n)^{2-\alpha} F_1(0) \,, \\
|F_2(u,v)-F_2(0,0)| &\le C (\log n)^{2-\alpha} F_2(0,0) \,.
\label{eq:F2-bd}
\end{align}
Turning to this task, since $t_2 := \ell_2-\ell_1 - 2 m' \ge 0$, $t_3 :=i_2 - i_1 -2 n' \ge 0$, 
$G(x,y)=G(y-x,0)$ and $S_i \stackrel{(d)}{=} -S_i$, we can further simplify the functions $F_s(\cdot)$ to be
\begin{align*}
F_1(u) &= E[G(S_{t_1},u)] \,, \\
F_2(u,v) &= 
\left\{\begin{array}{ll}
E[G(S_{t_1},u)
G(S_{t_1+t_2+t_3},v)\big] ,  \; &\textrm{if} \;\;  \pi_1=1, \\
E[G(S_{t_1+t_2},u)
G(S_{t_1+t_3},v)\big],  \; & \textrm{if}\;\; \pi_1=2,
\end{array}\right.
\end{align*}
where $t_1 = i_1+\ell_1-n'-m'$. Denoting by $p_j(u):=P(S_j=u)$, it is easy to check that 
\begin{equation}\label{eq:rep-Fs}
F_1(u) =\sum_{j_0>t_1} p_{j_0}(u) \,, \qquad
F_2(u,v)=\sum_{j_1,j_2} p_{j_1} (u) p_{j_2}(v) \,,
\end{equation}
where the sum is over $j_1>t_1$ and $j_2 > t_1+t_2+t_3$ in case $\pi=1$, and over
$j_1>t_1+t_2$, $j_2 > t_1+t_3$ when $\pi_1=2$. By the local \abbr{clt} for the \abbr{srw} on $\Z^4$, 
we have for some $C<\infty$ that  
\[
\Big| \frac{p_j(u)+p_{j+1}(u)}{p_j(0)+p_{j+1}(0)} - 1 \Big| \le \frac{C |u|^2}{9 j} 
\le C n' (\log n')^2 t_1^{-1}  \le 2 C (\log n)^{2-\alpha} 
\]
throughout the range of parameters considered here (utilizing the fact that 
$j_0,j_1,j_2 \ge t_1 \ge n_\alpha/2$).
The same bound applies with $v$ instead of $u$, and plugging these bounds in \eqref{eq:rep-Fs} 
results with \eqref{eq:F1-bd}-\eqref{eq:F2-bd}, thereby completing the proof of the lemma.
\end{proof}

\begin{proof}[Proof of Lemma \ref{moment*}]
\blue{First observe that $g$ of \eqref{dfn:g} can be written also as}
\begin{align*}
&g =
g_0-g_1-g_2+\ovg_{n,\alpha}^2 \ovg_{m,\alpha}^2 \,,\\
&g_0 := g_{n,\alpha}(i_1)\tilde{g}_{m,\alpha}(\ell_{\pi_1}) g_{n,\alpha}(i_2)  \tilde{g}_{m,\alpha}(\ell_{\pi_2}) ,\\
& g_1:= g_{n,\alpha}(i_1)\tilde{g}_{m,\alpha}(\ell_{\pi_1}) \ovg_{n,\alpha} \ovg_{m,\alpha}, 
\\
&g_2:=\ovg_{n,\alpha} \ovg_{m,\alpha} g_{n,\alpha}(i_2)\tilde{g}_{m,\alpha}(\ell_{\pi_2}).
\end{align*}
\blue{Proceeding} to show \eqref{moment*++}, note that $g_{n,\alpha}(i)$ 
and $\tilde{g}_{m,\alpha}(\ell)$ are measurable on $\F_{i} := (S_{i+j}-S_i, j \in (-n',n'])$,
$n':=n_{2\alpha}$, and  
$\tilde{\F}_{\ell} := (\tilde{S}_{\ell+j}-\tilde{S}_{\ell}, j \in (-m',m'])$, $m'=m_{2\alpha}$,
respectively.
Further, when $(i_1,i_2) \in I_\alpha(n)$ and $(\ell_1,\ell_2) \in I_\alpha(m)$, 
under the event $H^{(n')}_{i_1,i_2}(u,v) \cap \widetilde{H}^{(m')}_{\ell_1,\ell_2}(\tilde{u},\tilde{v})$ 
the law of $\G$ \blue{of \eqref{dfn:sG}}
given $\F_{i_1}$, $\F_{i_2}$, $\tilde{\F}_{\ell_1}$ and $\tilde{\F}_{\ell_2}$ is determined by 
$H^{(n')}_{i_1,i_2}(u,v) \cap \widetilde{H}^{(m')}_{\ell_1,\ell_2}(\tilde{u},\tilde{v})$. Thus, 
for $s=0,1,2$, and any such $(i_1,i_2)$, $(\ell_1,\ell_2)$,
\begin{align*}
E\big[
g_s \, \G 1_{H^{(n')}_{i_1,i_2}(u,v) \cap \widetilde{H}^{(m')}_{\ell_1,\ell_2}(\tilde{u},\tilde{v})}
\big]
= E\Big[ g_s 1_{H^{(n')}_{i_1,i_2}(u,v) \cap \widetilde{H}^{(m')}_{\ell_1,\ell_2}(\tilde{u},\tilde{v})
}  E[\G|
H^{(n')}_{i_1,i_2}(u,v) \cap \widetilde{H}^{(m')}_{\ell_1,\ell_2}(\tilde{u},\tilde{v})] \Big] \,.
\end{align*} 
With $g_s \ge 0$ and $E[g_s]=\ovg_{n,\alpha}^2 \ovg_{m,\alpha}^2$
whenever $(i_1,i_2) \in I_\alpha(n)$ and $(\ell_1,\ell_2) \in I_\alpha(m)$, 
we get from \eqref{bd:F2} (of Lemma \ref{moment-}), that for some universal $C<\infty$,
 \begin{align}
 \Big| E[ g_s \,  \G 1_{H^{(n)}_{i_1,i_2} \cap \widetilde{H}^{(m)}_{\ell_1,\ell_2}}] &-
 E[ g_s 1_{H^{(n)}_{i_1,i_2} \cap \widetilde{H}^{(m)}_{\ell_1,\ell_2}}] E[ \G ] \Big|
 \nonumber \\
\le & \!\!\!\!\! \sum_{\substack{|u|,|v| \le \sqrt{n'} \log n',\\ |\tilde{u}|, |\tilde{v}| \le \sqrt{m'}\log m'}}  
E\big[ g_s  1_{H^{(n')}_{i_1,i_2}(u,v) \cap \widetilde{H}^{(m')}_{\ell_1,\ell_2}(\tilde{u},\tilde{v})}  \big]
 \Big| E[\G|
H^{(n')}_{i_1,i_2}(u,v) \cap \widetilde{H}^{(m')}_{\ell_1,\ell_2}(\tilde{u},\tilde{v})] - E[\G] \Big| \nonumber \\
\le& C (\log n)^{2-\alpha} \,\ovg_{n,\alpha}^2 \ovg_{m,\alpha}^2 \, E[ \G].
\label{eq:bd-gs-calG}
\end{align}
In addition, with $g_s \in [0,1]$, $\G$ uniformly bounded and $(\log m)/(\log n) \ge \epsilon$, 
we have from \eqref{eq:rough-tail} that 
 \begin{align}
E\big[ g_s \,  \G 1_{(H^{(n)}_{i_1,i_2} \cap \widetilde{H}^{(m)}_{\ell_1,\ell_2})^c}\big]
+ E\big[ g_s 1_{(H^{(n)}_{i_1,i_2} \cap \widetilde{H}^{(m)}_{\ell_1,\ell_2})^c}\big] E[\G] 
&\le C_1(P((H^{(n)}_{i_1,i_2})^c) + P((H^{(m)}_{\ell_1,\ell_2})^c) ) \nonumber \\
&\le  2 C e^{-c(\log n)^2}. \label{eq:tail-gs-calG}
\end{align}
Combining \eqref{eq:bd-gs-calG} and \eqref{eq:tail-gs-calG} for $s=0,1,2$, we thus find for the zero-mean
\[
g =g_0-g_1-g_2+\ovg_{n,\alpha}^2 \ovg_{m,\alpha}^2 
\,,
\] 
that uniformly over $\pi$, $(i_1,i_2) \in I_\alpha(n)$ and $(\ell_1,\ell_2) \in I_\alpha(m)$,
\begin{align}\label{eq:bd-g-cG}
\big| E[ g \cdot \G] \big| \le \red{\sum_{s=0}^2 \big| E[g_s \G] - E[g_s] E[\G] \big| \le}
3 C (\log n)^{2-\alpha} \,\ovg_{n,\alpha}^2 \ovg_{m,\alpha}^2 \, E[ \G] 
+ 6 C e^{-c(\log n)^2} \,.
\end{align}
Further, as $|g| \le 1$, we have from \eqref{gre1} and \eqref{mult} at $p=2$, that for some $C_2<\infty$ 
and uniformly over all $(i_1,i_2) \in [1,n]^2$, $(\ell_1,\ell_2) \in [1,m]^2$,
\begin{align}\label{eq:bd-g-cG-out}
\big| E[ g \cdot \G] \big| \le \, E[ \G] \le C_2 
(i_1 \wedge i_2)^{-1/2} |i_2-i_1|_+^{-1/2} (\ell_1 \wedge \ell_2)^{-1/2} |\ell_2-\ell_1|_+^{-1/2} \,.
\end{align}
Next note that from \eqref{ee1} we have for some $C_3,C<\infty$,
\begin{align*}
\ovg_{n,\alpha}^2 \ovg_{m,\alpha}^2
\sum_\pi \sum_{\substack{(i_1,i_2)\in [1,n]^2,\\(\ell_1,\ell_2) \in [1,m]^2}} E[\G] 
& \le \frac{C_3}{(\log n)^{2} (\log m)^{2}} \sum_{\stackrel{1 \le i_1 \le i_2 \le n}{1 \le \ell_1 \le \ell_2 \le m}} 
i_1^{-1/2} |i_2-i_1|_+^{-1/2} \ell_1^{-1/2} |\ell_2-\ell_1|_+^{-1/2} \\
& \le \frac{C n m}{(\log n)^{2} (\log m)^{2}} \,.
\end{align*}
With the right most term of \eqref{eq:bd-g-cG} being $o(n^{-5})$, it follows that the overall contribution to  
\blue{the right-side of \eqref{eq:gG-dec}}
from $i_1,i_2 \in [n_\alpha,n-n_\alpha]$ with $|i_2-i_1| \ge n_\alpha$ and
$\ell_1,\ell_2 \in [m_\alpha,m-m_\alpha]$ with $|\ell_2-\ell_1| \ge m_\alpha$ 
is at most $O(nm (\log n)^{-2-\alpha})$, as specified in \eqref{moment*++}.
Further, the sum over the \abbr{rhs} of \eqref{eq:bd-g-cG-out} under any of the following three restrictions
\[ 
|i_2-i_1| < n_\alpha, \qquad i_1 \wedge i_2 < n_\alpha, \qquad i_1 \vee i_2 > n - n_\alpha \,,
\]
is at most $O(n m \sqrt{n_\alpha/n}) = O(n m (\log n)^{-\alpha/2})$. With
$(\log m)/(\log n) \ge \epsilon$, this applies also when
summing the \abbr{rhs} of \eqref{eq:bd-g-cG-out} under each of the analogous 
restrictions $|\ell_2-\ell_1| < m_\alpha$, $\ell_1 \wedge \ell_2 < m_\alpha$,
or $\ell_1 \vee \ell_2 > m-m_\alpha$. As $\alpha/2 < 2 + \alpha$, 
we have thus established \eqref{moment*++}.

Turning to \eqref{moment*+}, we similarly have from \eqref{bd:F1} of Lemma \ref{moment-}
that for some $C<\infty$ and $c>0$, uniformly over $m \in [n^\epsilon,n]$, 
$i \in [n_\alpha,n-n_\alpha]$ and $\ell \in [m_\alpha,m-m_\alpha]$, 
\begin{align}
E[g_{n,\alpha}(i) \tilde{g}_{m,\alpha}(\ell) G(S_i,\tilde{S}_\ell) ] &\le 
[1+ C (\log n)^{2-\alpha}] \, \ovg_{n,\alpha} \ovg_{m,\alpha} E[G(S_i,\tilde{S}_\ell)] +
2 C e^{-c (\log m)^2} \nonumber \\
& \le C (\log n)^{-2} i^{-1/2} \ell^{-1/2} + 2 C e^{-c (\log m)^2}
 \label{eq:bd1}
\end{align}
(using in the latter inequality also \eqref{ee1}, \eqref{gre1} and \eqref{mult} at $p=1$). As
$\log m \ge \epsilon \log n$, the sum of 
the \abbr{rhs} of \eqref{eq:bd1} over $i \le n$ and $\ell \le m$ is at most as specified
(ie. $O(\sqrt{n m} (\log n)^{-2}$). Further, even when $i < n_\alpha$ or
$\ell < m_\alpha$ or $i > n - n_\alpha$ or $\ell > m - m_\alpha$,
we still get the bound  $ C i^{-1/2} \ell^{-1/2}$ on the \abbr{lhs} of \eqref{eq:bd1}.
The sum of $i^{-1/2} \ell^{-1/2}$ subject to any one of the latter four restrictions is 
at most $O(\sqrt{m_\alpha n}) = O(\sqrt{n m} (\log m)^{-\alpha/2})$, which is as 
required (for $\alpha>4$).

Finally, recall that for some $C,C_3$ finite and all $m,n \in \N$,
\[
E Y_{m,n} \le \sum_{i=1}^n \sum_{\ell=1}^{m} E [ G(S_i,\tilde{S}_\ell) ] \le C_3 
\sum_{i=1}^{n} \sum_{\ell=1}^{m} i^{-1/2} \ell^{-1/2}
\le C \sqrt{n m} \,,
\]
as claimed.
\end{proof}


\subsection{The upper bound in the limsup-\abbr{lil}} As in case of the capacity 
limsup-\abbr{lil} lower bound, 
we adapt here the relevant element from the proof of the limsup-\abbr{LIL} of $|\R_n|$ and 
\abbr{srw}  $\Z^2$, namely \cite[Prop. 4.1]{BK02}. To this end, we first establish a key
approximate additivity for $\varphi_n := E \crn$. 
\begin{lem}\label{error} 
There exists $c'$ finite, such that for any $a,b \in \N$, 
\begin{align}\label{eq:bd-lem46}
0 \le \varphi_a + \varphi_b - \varphi_{a+b} \le c' \frac{\alpha^{1/4} \, (a+b)}{(\log (a+b))^2} 
\,, 
\end{align}
where $\varphi_n := E \crn$ and $\alpha:=\min(a,b)/(a+b)$.
\end{lem}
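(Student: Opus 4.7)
The plan is to first observe that by \eqref{def:Rab} and the independent, stationary increments of the \abbr{srw} (so that $\R(a,a+b] \stackrel{d}{=} \R_b$ is independent of $\R_a$), we have the identity $\varphi_a+\varphi_b-\varphi_{a+b} = E[V_{0,a,a+b}]$. The left inequality in \eqref{eq:bd-lem46} is then immediate from $V_{0,a,a+b}\ge 0$ (equivalently, subadditivity of capacity).

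For the upper bound, assume \abbr{wlog} $a \le b$, so $\alpha = a/(a+b) \le 1/2$. Following \cite[Prop.~1.6]{As5} and the last-exit decomposition exactly as in the derivation of \eqref{eq:Wjbd} with $Z_1 = \R_a$, $Z_2 = \R(a,a+b]$, and bounding $h_n(\cdot) \le g_{n,\beta}(\cdot)$ via \eqref{def:hni} for any fixed $\beta>0$, we arrive at
\[
V_{0,a,a+b} \le 2 W, \qquad W := \sum_{i=1}^a \sum_{\ell=1}^b g_{a,\beta}(i)\, G(S_i, S_{a+\ell})\, g_{b,\beta}(\ell)\circ\theta_a\,.
\]
By the strong Markov property at time $a$, translation invariance of $G$, and independence of the \abbr{srw} increments, $EW$ equals $EY_{a,b}$, for the quantity $Y_{n,m}$ of Lemma \ref{moment*} defined via two independent \abbr{srw}-s of lengths $a$ and $b$. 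By relabeling these walks, $Y_{a,b} \stackrel{d}{=} Y_{b,a}$.

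Fix $\epsilon \in (0,1)$ and $\beta>4$, and split into two regimes. In the \emph{balanced} regime $a \ge b^\epsilon$, the improved bound \eqref{moment*+} of Lemma \ref{moment*} (applied with $n=b$, $m=a$) yields
\[
E[V_{0,a,a+b}] \le 2 E[Y_{b,a}] \le C\sqrt{ab}/(\log b)^2 \le C' \sqrt{\alpha}\,(a+b)/(\log(a+b))^2,
\]
using $\sqrt{ab} = \sqrt{\alpha(1-\alpha)}\,(a+b) \le \sqrt{\alpha}\,(a+b)$, $\log b \ge \log((a+b)/2)$, and $\sqrt{\alpha} \le \alpha^{1/4}$, which gives \eqref{eq:bd-lem46}. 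In the \emph{unbalanced} regime $a < b^\epsilon$, the trivial bound $\ca(A) \le |A|$ gives $E[V_{0,a,a+b}] \le E R_a \le a = \alpha (a+b)$. Since $\alpha \le b^{\epsilon-1} = O((a+b)^{\epsilon-1})$, the quantity $\alpha^{3/4}(\log(a+b))^2 = O((a+b)^{3(\epsilon-1)/4}(\log(a+b))^2)$ remains bounded over $a+b\in \N$ for any fixed $\epsilon<1$. Thus $\alpha(a+b) \le c''\alpha^{1/4}(a+b)/(\log(a+b))^2$, again yielding \eqref{eq:bd-lem46}.

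The main technical input is the sharp moment bound \eqref{moment*+} of Lemma \ref{moment*}, which is used in the balanced regime and is where the crucial $(\log(a+b))^{-2}$ factor comes from. The principal potential obstacle is the parameter range where \eqref{moment*+} does not apply (the shorter walk being polynomially shorter than the longer one): this is handled by the crude trivial bound $V_{0,a,a+b} \le R_a \le a$, which is sufficient precisely because $\alpha$ is then so small that the loss against the target $\alpha^{1/4}(a+b)/\log^2$ is absorbed. The stated exponent $1/4$ is not tight (any $\gamma<1$ would work via the same argument), but it suffices for the subsequent \abbr{lil} argument.
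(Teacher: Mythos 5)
Your proposal is correct and follows essentially the same route as the paper: the identity $E[V_{0,a,a+b}]=\varphi_a+\varphi_b-\varphi_{a+b}$, the bound $E[V_{0,a,a+b}]\le 2\,E Y_{a,b}$ via the $\chi$-decomposition of \eqref{eq:Wjbd}, and then Lemma \ref{moment*} with a two-regime split in $\min(a,b)$. The only (immaterial) difference is the unbalanced regime: the paper splits at $a\ge\sqrt{b}$ and uses the crude bound $EY_{a,b}\le C\sqrt{ab}$ against the intermediate target $a^{1/4}b^{3/4}/(\log b)^2$, whereas you split at $a\ge b^{\epsilon}$ and use the even cruder $V_{0,a,a+b}\le R_a\le a$, which works equally well.
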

\begin{proof} Starting at the expected value of \eqref{def:Rab}, we get by the same 
reasoning we have used in deriving \eqref{eq:Wjbd}, that 
\begin{align*}
0 \le E [V_{0,a,a+b}] = \varphi_a + \varphi_b - \varphi_{a+b} & \le 
2E \sum_{i=1}^a  \sum_{\ell=1}^b g_{a,\alpha}(i) G(S_i,\tilde{S}_{\ell})\tilde{g}_{b,\alpha}(\ell) = 2 E Y_{a,b} \,.
\end{align*}
Assuming \abbr{wlog} that $a \le b$, it thus suffices to verify that $E Y_{a,b} \le C a^{1/4} b^{3/4}/(\log b)^2$
(yielding \eqref{eq:bd-lem46} for some $c'(C)<\infty$).
Indeed, for $a \ge \sqrt{b}$ this follows from \eqref{moment*+}, whereas if $a < \sqrt{b}$ then
even the bound $E Y_{a,b} \le C \sqrt{a b}$ which we have from Lemma \ref{moment*}, suffices.
\end{proof}

\blue{Recall \eqref{def:Rab} that $R_{a+b} - R_a - R_{b} \circ \theta_a = -V_{0,a,a+b} \le 0$
for any $a,b \ge 0$. This implies a \emph{non-random bound} on the difference of such centered variables, 
yielding in terms of the non-random $c'$ of Lemma \ref{error} the upper bound}
\begin{align}\label{eq:45BK}
\overline{R}_{a+b} - \overline{R}_a - \overline{R}_{b} \circ \theta_a &\le \blue{c'} \Big(\frac{\min(a,b)}{a+b}  \Big)^{1/4} \frac{(a+b)}{(\log(a+b))^2}\,.
\end{align} 
Utilizing \blue{\eqref{eq:45BK}, we next} establish sharp tail estimates for $\max_{j \le n} \{\overline{R}_j\}$
(in particular, improving upon \cite[Lemma 2.5]{HS20}).
\begin{lem}\label{bkk}
For some $c>0$, $C<\infty$ and all $n$,
\begin{align}\label{eq:mgf-bd}
E[e^{c D^{(n)}}] \le C \,, \qquad \qquad D^{(n)}:=\frac{(\log n)^2}{n} \max_{0 \le j\le n} \{\overline{R}_j\} \,.
\end{align}
\end{lem}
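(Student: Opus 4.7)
The plan is to establish the claim in two stages: first a uniform-in-$n$ bound on the MGF of the single random variable $(\log n)^2 \overline R_n/n$, and then an upgrade to the running maximum via a partition argument. Throughout, set $\theta_n := c(\log n)^2/n$ and $F(n) := E[\exp(\theta_n \overline R_n)]$.

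For the single-variable bound, apply the decomposition \eqref{eq:decomp4} at $k=2$, writing $R_n = U_1 + U_2 - \Delta_{n,2}$ with $U_1, U_2$ independent copies of $R_{n/2}$ and $\Delta_{n,2}\ge 0$. Since $\Delta_{n,2}\ge 0$,
\[
\overline R_n \le \overline U_1 + \overline U_2 + (2\varphi_{n/2} - \varphi_n),
\]
and by Lemma \ref{error} (at $a=b=n/2$) the deterministic error is at most $C_0 n/(\log n)^2$. Using independence and Jensen's inequality with exponent $\alpha_n := \theta_n/\theta_{n/2}=(\log n)^2/(2(\log(n/2))^2)<1$ (for large $n$), one obtains the recursion
\[
F(n) \le e^{cC_0}\, F(n/2)^{2\alpha_n}.
\]
Although $2\alpha_n = 1+O(1/\log n)$ is greater than $1$, the right choice of $k$ in the decomposition (using the iterated bound $k\varphi_{n/k}-\varphi_n \le Ck^{3/4}n/(\log n)^2$ derived from Lemma \ref{error}) balances the per-step error against the amplification factor $(k\alpha_n)$; taking $k$ either a sufficiently large constant, or a slowly growing function of $n$, then sets up a clean induction that keeps $\log F(n)$ bounded uniformly.

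For the upgrade to the running maximum, partition $[0,n]$ into $K:=\lceil(\log n)^2\rceil$ subintervals $[a_{i-1},a_i]$ of equal length $m_0 := n/K$. Since $j\mapsto R_j$ (capacity monotone in the set) and $j\mapsto \varphi_j$ are both non-decreasing, for $j \in [a_{i-1}, a_i]$,
\[
\overline R_j = R_j - \varphi_j \le R_{a_i} - \varphi_{a_{i-1}} = \overline R_{a_i} + (\varphi_{a_i} - \varphi_{a_{i-1}}).
\]
Subadditivity of $\varphi$ and the trivial bound $\varphi_{m_0}\le m_0$ (since $\mathrm{Cap}(A)\le |A|$) give $\varphi_{a_i}-\varphi_{a_{i-1}}\le m_0 \le n/(\log n)^2$. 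Hence $D^{(n)} \le 1 + \tfrac{(\log n)^2}{n}\max_{1\le i\le K}\overline R_{a_i}$. A union bound over the $K$ events together with Markov and the single-variable bound from step one yield
\[
P\bigl(D^{(n)} > \lambda\bigr) \le K \cdot C\, e^{-c'(\lambda-1)} = (\log n)^2 \cdot C\, e^{-c'(\lambda-1)}.
\]
The polylog factor is absorbed into $e^{-c''\lambda}$ for any $c''<c'$, and integrating against $\lambda$ yields the stated MGF bound for a suitable $c>0$.

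The main obstacle is closing the inductive argument in the first step. Lemma \ref{error} gives an error term of exactly the right order $n/(\log n)^2$, but over the $O(\log n)$ iterations required to descend from $n$ to a base case, both the per-step additive error and the multiplicative blow-up $(k\alpha_n)^{\text{depth}}$ can in principle accumulate, so a careful parameter choice (possibly $k=k_n$ growing with $n$, calibrated so the total accumulated error and amplification are $O(1)$) is essential; the base case uses \cite[Thm.~1.2]{As5} together with the finiteness part of Proposition \ref{prop:exptail}.
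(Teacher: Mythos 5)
Your argument has two genuine gaps, and the first one is fatal to the approach as proposed. The paper does not bootstrap the single-variable \abbr{mgf} by an equal-split recursion; it verifies the two hypotheses of \cite[Lemma 4.3]{BK02}, namely the pathwise near-subadditivity \eqref{eq:45BK}, whose error carries the crucial factor $(\min(a,b)/(a+b))^{1/4}$ (this follows from Lemma \ref{error} together with $V_{0,a,a+b}\ge 0$), and the variance bound \eqref{eq:46BK} from \cite[Cor. 1.5]{As5}, and then invokes the BK machinery, in which errors accumulated along the binary expansion of $j$ are weighted by these relative-length factors and hence sum to $O(n/(\log n)^2)$, while the maximum over $j\le n$ is handled inside the same chaining argument. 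Your step one uses only equal splits, i.e.\ only the case $\alpha$ bounded below, and this discards exactly the feature that makes the error summable. Indeed the obstacle you flag cannot be engineered away by tuning $k$: iterating Lemma \ref{error} with equal pieces gives $k\varphi_{n/k}-\varphi_n\asymp n\log k/(\log n)^2$ (cf. \eqref{log3}), so descending from $n$ to any bounded base scale forces a total mean discrepancy of order $n\log n/(\log n)^2$, which at the exponent $\theta_n=c(\log n)^2/n$ contributes $\asymp \log n$ to $\log F(n)$, irrespective of how the descent is scheduled. Nor does the proposed base case help: convergence in law of $((\log n)^2/n)\overline{R}_n$ (\cite[Thm. 1.2]{As5}) together with Proposition \ref{prop:exptail} says nothing about uniform-in-$n$ exponential moments --- that uniformity is essentially the statement being proved, so the step is circular.

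The second gap is in the upgrade to the running maximum. A union bound over the $(\log n)^2$ anchor points gives at best $P(D^{(n)}>\lambda)\le C(\log n)^2 e^{-c'\lambda}$, and integrating this yields only $E[e^{cD^{(n)}}]=O((\log n)^{2c/c'})$: the polylog prefactor cannot be ``absorbed into $e^{-c''\lambda}$'' except for $\lambda\gtrsim \log\log n$, which is precisely the range that does not matter for the \abbr{mgf}. A bound growing in $n$ is not enough here, since \eqref{eq:mgf-bd} is later raised to the $k$-th power over i.i.d.\ blocks (see the display preceding \eqref{eq:summable}), where any $n$-dependent factor would destroy summability. To control the maximum without a prefactor one needs a maximal-inequality/chaining argument that also uses the second-moment bound \eqref{eq:46BK} --- which your proposal never invokes --- and this is again what the reduction to \cite[Lemma 4.3]{BK02} provides.
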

\begin{proof} \blue{Note that \eqref{eq:mgf-bd} matches the statement of \cite[(4.4)]{BK02}}
for $G_j:=G_j^{n}:=\frac{(\log n)^2}{n} \overline{R}_j$,  $j \le n$. \blue{It is easy to check that
the proof of \cite[(4.3) \& (4.4)]{BK02} applies verbatim for any variables $\{G_j\}$ that satisfy
\cite[(4.5) \& (4.6)]{BK02}, and furthermore, that their} 
argument applies even if the power $\alpha^{1/2}$ on the right-most term in 
\cite[(4.5)]{BK02} is replaced by $\alpha^{1/4}$. \blue{Indeed, this is what we have here, with
\eqref{eq:45BK} yielding that for some non-random $c_1<\infty$ 
and all $a \le j \le n$,
\[
G^n_{j} - G^n_a  \le  G^n_{j-a} \circ \theta_a + c_1 \Big(\frac{a}{j} \wedge \frac{j-a}{j} \Big)^{1/4} \,.
\]
To finish the proof note that} for some $c_2 < \infty$ and all $a,b \ge 0$,
\blue{we get from} \cite[Cor. 1.5]{As5} that 
\begin{align}
E [ (\overline{R}_b \circ \theta_a)^2 ] &\le \frac{c_2 b^2}{(\log b)^4}\,, \qquad \text{hence} 
\qquad E [ (G^n_j \circ \theta_a)^2 ] \le c_2 \frac{j^2 (\log n)^4}{n^2 (\log j)^4}\,,
\label{eq:46BK}
\end{align} 
\blue{which is precisely \cite[(4.6)]{BK02}.}
\end{proof}

For the limsup-\abbr{lil} upper bound, by Borel-Cantelli it suffices to show that for any $q>1$, 
$\gamma>0$ and $\varepsilon>0$, 
\begin{equation}\label{eq:summable}
\sum_i P\bigg( \frac{(\log m)^2}{m} \max_{r_{i-1} < \ell \le r_i} \{ \overline{R}_\ell \} \ge 
\big(\frac{\pi^2}{8}+ 2 \varepsilon\big) k \log k
\bigg) < \infty \,,
\end{equation}
where $r_i = q^i$, $k=2^p$ for $p=[(\log \gamma + \log_3 r_i)/\log 2]$ and $m=\lceil r_i/k \rceil$. Now,
considering \eqref{eq:decomp4} for $n_j=j m$, $j<k'$ and $n_{k'}=\ell$ it follows from 
\eqref{eq:bd-lem46} that 
\begin{equation}\label{eq:dec-ub}
\frac{(\log m)^2}{m} \max_{(k'-1)m < \ell \le k' m} \{ \overline{R}_{\ell} \}
\le \sum_{j=1}^{k'} D_j^{(m)} + \frac{(\log m)^2}{m} \Big( k' \varphi_m - \varphi_{k' m} \Big)
+ c'  \,,
\end{equation}
where $D^{(m)}_j$ are i.i.d. copies of $D^{(m)}$ of Lemma \ref{bkk}. With 
$k' \mapsto (k' \varphi_m - \varphi_{k' m})$ non-decreasing and $D^{(m)} \ge 0$, the
maximum over $k' \le k$ of the \abbr{rhs} of \eqref{eq:dec-ub}, is attained at $k'=k$.
Further, en route to \eqref{log3}, we showed that as $p = o(\log m) \to \infty$,
\begin{equation}\label{log3*}
\frac{1}{k \log k} \frac{(\log m)^2}{m} \Big( k \varphi_m - \varphi_{k m} \Big) \to \frac{\pi^2}{8} \,.
\end{equation}
Thus, noting that \eqref{eq:mgf-bd} results with 
 \[
P\Big(\sum_{j=1}^k D_j^{(m)} \ge \varepsilon k \log k \Big) \le C^k e^{-\varepsilon c k (\log k)} \,,
 \]
which is summable over $i$ for our choice of $k=\gamma \log i$, we have established \eqref{eq:summable}
and thereby completed the proof of the limsup-\abbr{lil}.



\subsection{Non-random and positive liminf-\abbr{LIL}} Setting hereafter
$\blue{\barh}_4(n) := n (\log_2 n)/(\log n)^2$, we first show that the 
$[-\infty,\infty]$-valued, 
\[
c_\star := - \liminf_{r \to \infty} \bigg\{ \frac{\overline{R}_r}{\barh_4(r)} \bigg\} \,,
\]
is non-random. Indeed, recall \eqref{for1} that $\ca(\R_r) - \ca (\R[k,r]) \in [0,k]$. Thus,  
for any $k$ finite, changing $\R_k$ without altering $S_k$ yields at most a difference
of $k$ in  the value of $\ca(\R_r)$, implying by the Hewitt-Savage zero-one law
that $c_\star$ is non-random. 

\blue{Turning next to show that $c_\star>0$,} it suffices to establish this 
for the sub-sequence $r_{j+1}=r_j + 2 n_j$, where $n_j:=2^{j^2}$, $r_0:=0$
\blue{and we proceed to show that infinitely many  
$-\overline{R}_{r_{j+1}}$ are at least of $O(\barh_4(n_j))$, due to heavy tails
of the non-negative variables $V_{0,n,2n}/\barh_4(n)$. Specifically,}
setting $Q_n :=|\R(0,n] \cap \R (n,2n]|$ we have from \cite[Prop. 1.6]{As5} that 
\begin{equation}\label{bd:Vn}
V_{0,n,2n} \ge 2 \inf_{j,\ell \in [1,2n]} \{ G(S_j,S_\ell) \} \, R_n \, R_{n,2n} - Q_n \,.
\end{equation}
Recall from \cite[Section 3.4]{LA91} that $E Q_n \le C_0 \log n$ for some $C_0 <\infty$ and all $n$. 
Hence, by Markov's inequality
\begin{align}\label{bd:Dn}
P(\hat{A}_n^c) := P( Q_n \ge (\log n)^3 )
\le  C_0 (\log n)^{-2}. 
\end{align} 
Further, recall \cite[(1.4) and Cor. 1.5]{As5}, that $E R_n \ge n/(\log n)$ 
and Var$(R_n) \le C_1 n^2/(\log n)^4$ for some $C_1 <\infty$ and all $n$ large, 
in which case by Markov's inequality
\begin{align*}
P\Big(R_n \le \frac{n}{2 \log n}\Big) \le \Big(\frac{2 \log n}{n}\Big)^2 {\rm Var}(R_n) \le \frac{4 C_1}{(\log n)^2} \,.
\end{align*}
Consequently, by the union bound,
\begin{align}\label{bd:An}
P(A_n^c) := P\Big( \min( R_{n}, R_{n,2n}) \le \frac{n}{2 \log n} \Big) \le 8 C_1 (\log n)^{-2} .
\end{align}
Next, from \eqref{gre1} we have that 
\[
F_{k,m} := \big\{ \max_{j \le 2km} |S_j| \le \sqrt{m} \big\} \quad \Longrightarrow \quad 
\inf_{j,\ell \in [1,2km]} \{ G(S_j,S_\ell) \} \ge (4 C m)^{-1} \,.
\] 
Setting $c=1/(10 C)$ it thus follows from \eqref{bd:Vn} that for all 
$n=k m \ge n'(C)$, on the event $G_n := F_{k,m} \cap \hat{A}_n \cap A_n$, 
\[
V_{0,n,2n} \ge \frac{2}{4 C m} \Big(\frac{n}{2 \log n} \Big)^2  - (\log n)^3 \ge 
\frac{c n k}{(\log n)^2} \,.
\]
Similarly to our derivation of the \abbr{lhs} of \eqref{eq:Fk}, it follows from the invariance principle
that $P(F_{k,m}) \ge c_2^k$ for some $c_2>0$ and any $k,m \ge 1$. By \eqref{bd:Dn}-\eqref{bd:An}
this implies in turn that $P(G_n) \ge \frac{1}{3} c_2^k$ for $k=2^p=[\gamma \log_2 n]$, provided
$\gamma' := \gamma \log (1/c_2) < 2$ and $n \ge n'$. To summarize, we have that for 
$c'=c \gamma>0$ and all $n \ge n'$,  
\[
P\Big( V_{0,n,2n} \ge c' \barh_4(n) \Big) \ge (\log n)^{-\gamma'} \,.
\]
The same applies for the mutually independent $\{V_{r_j,r_j+n_j,r_{j+1}} \}$, hence upon
fixing $\gamma'<1/2$ we get by the second Borel-Cantelli lemma, that a.s.
\begin{equation}\label{eq:bc2}
\limsup_{j \to \infty} \Big\{ \barh_4(n_j)^{-1} V_{r_j,r_j+n_j, r_{j+1}} \Big\} \ge c' > 0\,.
\end{equation}
Now, as in \eqref{eq:decom}, for any $r \ge 0$, $n \ge 1$,
\begin{align}\label{eq:dcm-liminf}
\overline{R}_{r,r+2n} &  
=2 \varphi_{n} - \varphi_{2n} + \overline{R}_{r,r+n}
  + \overline{R}_{r+n,r+2n} - V_{r,r+n,r+2n}  \,.
\end{align}
Considering \eqref{eq:23As5} for $p=1$ (that is, $k=2$), 
we see that as $n \to \infty$,  
\begin{align}\label{eq:exp-lim}
 \frac{(\log n)^2}{n} \big[ 2 \varphi_{n}- \varphi_{2n} \big] \le 
 \frac{(\log n)^2}{n} E[\chi_{2n}(1,1)] \to \frac{\pi^2 \log 2}{4}  \,.
\end{align} 
Further, recall \eqref{eq:summable} that for any $\delta>0$, 
\begin{align}\label{eq:bc1}
\sum_j P\big( \overline{R}_{n_j}  \ge (1+\delta) h_4(n_j) \big) < \infty \,.
\end{align}
The same applies of course also for $\overline{R}_{n_j} \circ \theta^{r_j}$ and 
$\overline{R}_{n_j} \circ \theta^{r_j+n_j}$, so with $h_4(n)/\barh_4(n) \to 0$,
we deduce from \eqref{eq:bc2}-\eqref{eq:bc1}
(at $n=n_j$ and $r=r_j$), that a.s.
\[
\liminf_{j \to \infty}  \Big\{ \frac{\overline{R}_{r_j,r_{j+1}}}{\barh_4(n_j)} \Big\}
\le - \limsup_{j \to \infty} \Big\{ \barh_4(n_j)^{-1} \, V_{r_j,r_j+n_j, r_{j+1}} \Big\} 
\le - c' \,.
\]
Now from \eqref{eq:45BK} (at $a=r_j$, $b=2n_j$), 
\[
\overline{R}_{r_{j+1}} \le \overline{R}_{r_j} + \overline{R}_{r_j,r_{j+1}}  
+ 
\frac{c_1 r_{j+1}}{(\log r_{j+1})^2} \,,
\]
and since $r_{j+1} \le 3 n_j$, dividing by $\barh_4(n_j)$ and taking limits, yields that
\[
- 3 c_\star \le \liminf_{j \to \infty}  \Big\{ \barh_4(n_j)^{-1} \, \overline{R}_{r_{j+1}} \Big\}
\le -c' + \limsup_{j \to \infty}  \Big\{  \barh_4(n_j)^{-1} \, \overline{R}_{r_j} \Big\} \,.
\] 
The last term is a.s. zero (as \eqref{eq:bc1} applies also for $\{r_j\}$ instead of $\{n_j\}$
and $h_4(n)/\barh_4(n) \to 0$), so we conclude that $c_\star \ge c'/3>0$.

\subsection{Finiteness of the liminf-\abbr{LIL}}
We show that $c_\star \le c_o < \infty$ by following the 
proof of the upper bound of \cite[Thm. 1.7]{BCR} (on the liminf \abbr{LIL} of $|\R_n|$ in $\Z^2$), 
while replacing \cite[Thm. 1.5]{BCR} and \cite[Lemma 10.3]{BCR}, respectively, by 
\begin{align}\label{eq:sd2}
\sup_n \Big\{ (\log n) (\log_2 n)^2 P\Big( - \overline{R}_n > c_o \, \barh_4(n) \, \Big) \Big\} 
& < \infty \,, \\ 
\label{eq:sd3}
\sup_n  \Big\{ (\log n)^2 P\Big( \max_{n/q_0 \le k \le n}(\overline{R}_n-\overline{R}_k )
>\epsilon \, \barh_4(n) \Big) \Big\} & < \infty \,,
\end{align}
holding for some $c_o < \infty$, any $\epsilon>0$ and some $q_0(\epsilon)>1$. 

Similarly to $|\R_n|$, the capacity is sub-additive (see \eqref{def:Rab}),
and upon centering satisfies \eqref{eq:45BK}, which is the analog of \cite[(10.2)]{BCR}. 
Thus, the bound \eqref{eq:sd3} follows as in \cite[Proof of Lemma 10.3]{BCR},
now using \eqref{eq:mgf-bd} to arrive at \cite[(10.14)]{BCR} and to bound the 
\abbr{rhs} of \cite[(10.15)]{BCR}). 

Since $|\overline{R}_{n} - \overline{R}_{n'}| \le |n-n'|$, it suffices to prove 
\eqref{eq:sd2} \blue{only for some $\{n_i\}$ such that $n_{i+1}-n_i = o(\barh_4(n_i))$.
We take here all integers of the form} $n=m k$, $k=2^p$, $p=[(\log_2 n)/\log 2]$
\blue{(thus with gaps of size $k=O(\log n)=o(\barh_4(n))$).} Setting such values and
$n_u':=2^{-u} n$ for $1 \le u \le p$, we have 
as in \eqref{eq:decom}, now using an alternative expression for $\Delta_{n,k}$ of \eqref{eq:decomp4}, that  
\begin{align}
-\overline{R}_n &=  \overline{\Delta}_{n,k} - \sum_{j=1}^k \overline{U}_j \,, \qquad 
\Delta_{n,k} 
=\sum_{u=1}^p \sum_{j=1}^{2^{u-1}} V_{(2j-2) n_u', (2j-1) n_u' ,2j n_u'} \,,
\label{aw1}
\end{align}
with $k$ i.i.d. copies $\{\overline{U}_j\}$ of $\overline{R}_m$ and the i.i.d. variables
$\{ V_{(2j-2) n_u', (2j-1) n_u' ,2j n_u'}\}$ per fixed $u \ge 1$.
Since Var($\overline{R}_m) \le C_1  m^2/(\log m)^4$ it follows by Markov's inequality, that 
\begin{align}\label{aw2}
P\Big(- \sum_{j=1}^k \overline{U}_j \ge  \epsilon c_o \barh_4(n) \Big)
\le  \big(\epsilon c_o \barh_4(n) \big)^{-2} \frac{C_1 k m^2}{(\log m)^4} 
\le \frac{2 C_1}{(\epsilon c_o)^2 k (\log_2 n)^2}  \,. 
\end{align}
Setting $c_o>(1-\epsilon)^{-1} c_1^{-1}$, we arrive at \eqref{eq:sd2} out of (\ref{aw1}), (\ref{aw2}) and 
the following lemma. 
\begin{lem}\label{sd1}
For some $c_1>0$ and any $\lambda>0$, 
\begin{align}\label{md-Delta}
\limsup_{n\to \infty}
\frac{1}{\log_2 n}
\log P\big( \overline{\Delta}_{n,k} \ge \lambda \barh_4(n) \big)
\le -c_1 \lambda \,,
\end{align}
where $\Delta_{n,k}$ are as in \eqref{def:Delta} for $k=2^p$ and $p=[(\log_2 n)/\log 2]$. 
\end{lem}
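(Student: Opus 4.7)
The plan is to establish a uniform exponential-moment bound for $\Delta_{n,k}$ at the scale $(\log n)^2/n$, and then conclude by the exponential Markov inequality — thereby sidestepping any direct lower-tail estimate on $\overline R_n$.

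The key technical input is the uniform sharp exponential moment estimate
\begin{equation*}
\sup_n E\Big[\exp\Big(c_0\,\frac{(\log n)^2 V_{0,n,2n}}{n}\Big)\Big] \le C_0,
\end{equation*}
for some $c_0, C_0 \in (0,\infty)$. This should follow from the a.s.\ domination $V_{0,n,2n} \le 2 W \stackrel{d}{=} 2 Y_{n,n}$ of \eqref{eq:Wjbd} (with $Y_{n,n}$ involving two independent \abbr{srw}-s as in Lemma \ref{moment*}), combined with an all-order moment bound $E[Y_{n,n}^p] \le C^p\, p!\, (n/(\log n)^2)^p$ for every $p \ge 1$. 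The latter is obtained by extending the decoupling argument of Lemma \ref{moment-} to arbitrary $p$-tuples $(i_1,\ldots,i_p;\,\ell_1,\ldots,\ell_p)$, partitioned according to mutual separations and distance to the endpoints of $[1,n]$, invoking Lemma \ref{extail:lem} to bound the $p$-th moment of $(1/n)\sum G(S_i,\tilde S_\ell)$, and inserting the sharp asymptotic $\overline g_{n,\alpha} \sim \pi^2/(8 \log n)$ from \eqref{ee1}.

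Given this input, the iterated Cauchy--Schwarz inequality applied to the dyadic decomposition $\Delta_{n,k} = \sum_{u=1}^p S_u$ of \eqref{aw1} (with $S_u = \sum_{j=1}^{2^{u-1}} V_j^{(u)}$ and the $V_j^{(u)}$ i.i.d.\ within each level $u$) yields
\begin{equation*}
E\big[\exp(c\, \Delta_{n,k})\big] \le \prod_{u=1}^{p-1} E\big[\exp(2^u c\, S_u)\big]^{1/2^u} \cdot E\big[\exp(2^{p-1} c\, S_p)\big]^{1/2^{p-1}}.
\end{equation*}
At $c = c_0 (\log n)^2/n$, each exponent matches the MGF scale, since $2^u c = c_0 (\log n)^2/n_u'$ (and $2^{p-1} c = c_0 (\log n)^2/(2 n_p')$). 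By within-level independence and the MGF input, $E[\exp(c_0(\log n)^2 S_u/n_u')] \le C_0^{2^{u-1}}$, whose $1/2^u$-th root equals $C_0^{1/2}$ uniformly in $u$ (for $u = p$ one applies Jensen first). Multiplying across the $p \asymp \log_2 n/\log 2$ levels yields $E[\exp(c_0(\log n)^2 \Delta_{n,k}/n)] \le C_0^{p/2} \le C_1 (\log n)^\beta$ with $\beta = \log C_0/(2 \log 2)$ and $C_1$ finite, uniformly in $n$.

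Finally, since $\Delta_{n,k} \ge 0$ gives $E\Delta_{n,k} \ge 0$, exponential Markov with $c = c_0 (\log n)^2/n$ yields
\begin{equation*}
P\big(\overline \Delta_{n,k} \ge \lambda \bar h_4(n)\big) \le e^{-c \lambda \bar h_4(n)}\, E\big[\exp(c\, \Delta_{n,k})\big] \le C_1 (\log n)^{\beta - c_0 \lambda}.
\end{equation*}
Taking the logarithm and dividing by $\log_2 n$, noting $\log_3 n / \log_2 n \to 0$, we conclude $\limsup_n (\log_2 n)^{-1} \log P \le -c_0 \lambda$, which is \eqref{md-Delta} with $c_1 = c_0$. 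The main obstacle is the first step — the uniform MGF bound at the $(\log n)^2/n$ scale — which requires the full all-order extension of the decoupling scheme of Lemma \ref{moment-}, with careful factorial-moment bookkeeping paralleling the proof of Lemma \ref{extail:lem}; the Cauchy--Schwarz chaining and the final Markov step are then routine.
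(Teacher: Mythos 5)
There is a genuine gap, and it is fatal for the statement as claimed. Your final step is mis-bookkept: from $P(\overline{\Delta}_{n,k}\ge \lambda \bar h_4(n))\le C_1(\log n)^{\beta-c_0\lambda}$ you get $\log P \le \log C_1 + (\beta-c_0\lambda)\log_2 n$, so dividing by $\log_2 n$ gives $\limsup \le \beta-c_0\lambda$, not $-c_0\lambda$; the remark that $\log_3 n/\log_2 n\to 0$ is irrelevant since $\log\big((\log n)^\beta\big)=\beta\log_2 n$. Moreover this is not a reparable slip in constants: you bound the moment generating function of the \emph{uncentered} $\Delta_{n,k}$, but for this choice of $k$ one has $E\Delta_{n,k}=(1+o(1))\frac{\pi^2}{8}\bar h_4(n)$ (see \eqref{log3*}), so by Jensen $E[e^{c\Delta_{n,k}}]\ge e^{cE\Delta_{n,k}}=(\log n)^{(1+o(1))\pi^2 c_0/8}$, forcing $\beta\ge \pi^2 c_0/8>0$ no matter how the chaining is organized. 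Hence your bound gives nothing for $\lambda<\pi^2/8$, while the lemma is asserted for every $\lambda>0$ (with a single $c_1$). The cure is to control exponential moments of a \emph{centered} quantity. That is exactly what the paper does: after an $L^2$ comparison (Lemma \ref{moment*}, via the second-order decoupling of Lemma \ref{moment-}) it replaces the weighted level sums by $n\,\overline{g}_{n'_u,\alpha}^2\,\alpha^{(n'_u)}_j$, matches the means using \eqref{ee1} and $2E[W^{(m)}]=(1+o(1))E[V_{0,m,2m}]$, and then applies Markov with Lemma \ref{sd2}, i.e. the uniform bound $\sup_n E[e^{c_2\overline{\Theta}_n}]<\infty$ for the \emph{centered} $\overline{\Theta}_n$, obtained by Le Gall's chaining of centered, i.i.d.-within-level variables. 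A Cauchy--Schwarz chaining of raw level MGFs cannot substitute for this, because each level's mean is of the same order (on the $(\log n)^2/n$ scale) as the deviation you are trying to bound.

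A secondary but nontrivial issue is your key input $\sup_n E[\exp(c_0 (\log n)^2 V_{0,n,2n}/n)]\le C_0$, equivalently $E[Y_{n,n}^p]\le C^p p! \,(n/(\log n)^2)^p$ for all $p$. This is asserted, not proved: it requires extending the decoupling of Lemma \ref{moment-} from one and two Green's-function factors to arbitrary $p$-tuples, and extracting a uniform $(\log n)^{-1}$ gain from each of the $2p$ escape-probability weights at correlated time points, with factorial control of the combinatorics. The paper deliberately avoids this by needing exponential moments only for the unweighted quantities ($X_n$ in Lemma \ref{extail:lem}, and $\overline{\Theta}_n$ in Lemma \ref{sd2}), pulling the factor $\overline{g}_{n,\alpha}^2\sim(\pi^2/8)^2(\log n)^{-2}$ out deterministically and handling the random weights only at the level of second moments. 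Even if you supplied this input, the centering problem above would still prevent the conclusion for small $\lambda$.
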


We note in passing \cite[Lemma 2.6]{HS20}, which is somewhat related to Lemma \ref{sd1}.
The proof of Lemma \ref{sd1} relies in turn on our next result. 
 \begin{lem}\label{sd2}
Set $p=[(\log_2 n) \blue{/ \log 2}]$ and for any $r \in \N$, the partition $I_i^{(r)} := ((i-1)r,i r]$ of $\N$. Consider for 
$n'_u:=2^{-u} n$ and each $0 \le u \le p$, the i.i.d. variables 
\begin{align*}
\alpha^{(n'_u)}_{j} := \frac{1}{n} \sum_{i \in I^{(n'_u)}_{2j-1}} \sum_{\ell \in I^{(n'_u)}_{2 j}}
G(S_i , S_\ell ),    \qquad  1 \le j \le 2^{u-1}\,.  \qquad  
\end{align*}
Then, for some $c_2>0$,
\begin{align}\label{mgf-Theta}
\sup_{n \in \N} E[e^{c_2 \overline{\Theta}_n}] < \infty, \qquad \qquad 
\Theta_n :=\sum_{u=1}^p \sum_{j=1}^{2^{u-1}} \alpha^{(n'_u)}_{j}\,.
\end{align}
\end{lem}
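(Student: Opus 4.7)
The plan is to bound the exponential moment of each $\alpha^{(n'_u)}_j$ via a scaled version of Lemma \ref{extail:lem}, combine the $2^{u-1}$ i.i.d. copies at each fixed scale $u$ through a Bernstein-type \abbr{mgf} bound, and finally aggregate across scales by a weighted H\"older inequality that defuses the strong cross-scale dependence. For the identification step, set $a:=(2j-1)n'_u$; by the strong Markov property $\tilde S_m:=S_{a+m}-S_a$, $m\in[1,n'_u]$, is a \abbr{srw} independent of $\F_a$, and time reversal gives that $\hat S_k:=S_a-S_{a-k}$, $k\in[0,n'_u-1]$, is another, independent \abbr{srw}. Writing $G(S_{a-k},S_{a+m})=G(0,\tilde S_m+\hat S_k)=G(-\hat S_k,\tilde S_m)$ and using $\hat S\stackrel{d}{=}-\hat S$, one obtains $\alpha^{(n'_u)}_j \stackrel{d}{=} \beta_u^{-1}\tilde X_{n'_u}$ with $\beta_u:=n/n'_u=2^u$, where $\tilde X_{n'_u}$ differs from $X_{n'_u}$ in Lemma \ref{extail:lem} only by a boundary term of the same form whose moments satisfy the same bound. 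The inductive argument that proved \eqref{exptail*} then yields $E[(\alpha^{(n'_u)}_j)^p]\le p!(C/\beta_u)^p$ uniformly in $j,u,n,p$.

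At fixed $u$, the $2^{u-1}$ variables $\{\alpha^{(n'_u)}_j\}_j$ depend on the disjoint increment blocks over $((2j-2)n'_u, 2jn'_u]$ and are hence i.i.d. Combining the moment estimate above with Bernstein's moment condition, applied to the centered variables with variance proxy $4C^2/\beta_u^2$ and Orlicz scale $2C/\beta_u$, gives constants $C',c>0$ independent of $u,n$ with
\begin{equation*}
\log E[e^{\lambda \overline T_u}] \le C' \lambda^2 2^{-u-1}, \qquad |\lambda|\le c\,\beta_u , \qquad T_u:=\sum_{j=1}^{2^{u-1}}\alpha^{(n'_u)}_j .
\end{equation*}

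To combine the $\{\overline T_u\}_{u=1}^p$ into a uniform bound on $E[e^{c_2 \overline \Theta_n}]$ despite their correlation, I would apply the generalized H\"older inequality with weights $w_u:=C_\star/u^2$ normalized so that $\sum_{u\ge 1}w_u=1$ (hence $C_\star=6/\pi^2$):
\begin{equation*}
\log E[e^{c_2 \overline \Theta_n}] \le \sum_{u=1}^p w_u \log E[e^{(c_2/w_u)\overline T_u}] \le \frac{C' c_2^2}{2C_\star}\sum_{u\ge 1}\frac{u^2}{2^u} ,
\end{equation*}
the \abbr{rhs} being a finite constant independent of $n$. This is valid provided $c_2/w_u=c_2 u^2/C_\star$ remains within the range $|\lambda|\le c\,2^u$ from the previous paragraph for every $1\le u\le p$, i.e.\ $c_2\le c\,C_\star \inf_{u\ge 1}\{2^u/u^2\}=(8/9)c\,C_\star$, the infimum being attained at $u=3$. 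Any positive $c_2$ below this threshold therefore yields \eqref{mgf-Theta}.

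The main obstacle is the strong cross-scale dependence: the $\overline T_u$ are all functions of the same \abbr{srw} viewed at nested dyadic scales, so one cannot simply exponentiate and multiply factor by factor. The weighted H\"older step circumvents this at the cost of inflating the exponential parameter by a factor $u^2/C_\star$ at level $u$; the inflation is affordable precisely because the single-scale \abbr{mgf} estimate remains sub-Gaussian all the way up to $|\lambda|\asymp 2^u$, while the variance proxy $2^{-u-1}$ decays geometrically, making the combined bound $\lambda^2 \cdot 2^{-u-1}$ evaluated at $\lambda\asymp u^2$ summable over $u\ge 1$.
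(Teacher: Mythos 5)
Your proof is correct and takes essentially the same route as the paper: the identification $\alpha^{(n'_u)}_j \stackrel{d}{=} 2^{-u}X_{n'_u}$ (up to a harmless boundary index) combined with the moment bounds \eqref{exptail*} is precisely the paper's first step, and your Bernstein-plus-weighted-H\"older aggregation over the scales $u$ is an explicit, self-contained rendering of the iterated H\"older argument of \cite[Proof of Thm.~1]{LG94} that the paper cites for that step. The only cosmetic remark is that your weights $w_u$ sum to less than one over $1\le u\le p$, which is harmless (pad with the constant function $1$).
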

\begin{proof} With the \abbr{srw} having independent and symmetric increments, one easily verifies that 
$\alpha^{(n'_u)}_{1} \stackrel{(d)}{=} 2^{-u} X_{n'_u}$, for $X_n$ of Lemma \ref{extail:lem}. Consequently, from \eqref{exptail*} and \eqref{exptail} we know that 
\begin{align}\label{bd:varphi}
\varphi(\lambda)
:= \sup_{u , n \in \N} E[ \exp(\lambda \, 2^u \, \overline{\alpha}^{(n'_u)}_{1}) ] \le 1 + c \lambda^2 < \infty \,, 
\end{align}
for some $c<\infty$ and all $\lambda>0$ small enough. 
The uniform \abbr{mgf} bound of \eqref{mgf-Theta} then follows as in 
\cite[Page 177, Proof of Thm. 1]{LG94}, upon setting $\alpha_0=\alpha_{1}^{(n'_0)}$, $c_2=b_\infty>0$ of \cite{LG94}, 
and noting that \eqref{bd:varphi} suffices in lieu of both \cite[Lemma 2]{LG94} and 
the scale invariance of \cite[property (ii)]{LG94}.
\end{proof}

\begin{proof}[Proof of Lemma \ref{sd1}] For $u \le p$, 
consider the i.i.d. variables $W_{u,j} := W^{(n_u')} \circ \theta_{(2j-2) n'_u}$, with 
\begin{align*}
W^{(m)} &:= \sum_{i,\ell \in [1,m]} 
g_{m,\alpha}(i) G(S_i,S_{\ell+m}) g_{m,\alpha}(\ell) \circ \theta_{m} 
\end{align*}
having the law of $W_1$ of \eqref{eq:Wjbd} and their (i.i.d.) approximations
$\underline{W}_{u,j} := n \ovg_{n'_u,\alpha}^2 \alpha^{(n'_u)}_j$
(for $g_{m,\alpha}(\cdot)$ and $\ovg_{m,\alpha}$ of Lemma \ref{moment*}).
Setting  
\[
Z_u := \sum_{j=1}^{2^{u-1}} W_{u,j} \,, \qquad \underline{Z}_u := \sum_{j=1}^{2^{u-1}} \underline{W}_{u,j} \,,
\]
it follows by Cauchy-Schwarz and \eqref{moment*++} that for some $C<\infty$, any $u \le p$ and all $n$,
\begin{equation}\label{bd:Zu-uZu}
E \Big[ (Z_u - \underline{Z}_u)^2 \Big] \le 2^{2(u-1)} E [ (W_{u,1} - \underline{W}_{u,1})^2] \le 
C n^2 (\log n)^{-\alpha/2} \,.
\end{equation}
In particular, taking $\alpha > 8 + 2 c_1 \lambda$ yields, by the union bound and Markov's inequality that 
for any $\epsilon>0$,
\begin{align}
P\Big( \sum_{u=1}^p (Z_u-\underline{Z}_u) \ge \epsilon \barh_4(n) \Big) &\le 
\sum_{u=1}^p P(Z_u - \underline{Z}_u \ge \epsilon p^{-1} \barh_4(n) ) \nonumber \\ 
& \le C p^3 ( \epsilon \barh_4(n) )^{-2} n^2 (\log n)^{-\alpha/2} 
\le C' \epsilon^{-2} (\log n)^{-c_1 \lambda}.
\label{eq:Zu-uZu}
\end{align}
We also find for our choice of $k$, see \eqref{log3*}, that
\[
E \Delta_{n,k} =  \frac{\pi^2}{8} \barh_4(n) (1+o(1)) \,.
\]
Similarly to \eqref{eq:Wjbd}, we have that $V_{(2j-2) n_u', (2j-1) n_u' ,2j n_u'} \le 2 W_{u,j}$ 
for any $u,j$. In view of \eqref{aw1} and \eqref{eq:Zu-uZu} it thus
suffices to establish \eqref{md-Delta} with $\overline{\Delta}_{n,k}$
replaced by 
\begin{align*}
2 \sum_{u=1}^p \underline{Z}_u - \frac{\pi^2}{8} \barh_4(n) \,,
\end{align*}
which in view of \eqref{ee1} and the definition of $\Theta_n$, can be further replaced by 
\begin{align*}
\frac{\pi^2}{8} \frac{n}{(\log n)^2} \Big( \Big( \frac{\pi^2}{4}+o(1) \Big) \Theta_n - \log_2 n \Big) \,.
\end{align*}
Moreover, by \eqref{bd:Zu-uZu}, 
\[
\Big| E \sum_{u=1}^p Z_u - E \sum_{u=1}^p \underline{Z}_u \Big| 
 \le O\Big(\frac{p n}{(\log n)^{\alpha/4}}\Big) = o(\barh_4(n)) \,,
\]
and combining \cite[Lemma 6.5]{As5} with the considerations 
as in \eqref{eq:rough-tail} and after \eqref{eq:bd1}, we deduce that for large $m$,
\[
2 E [W^{(m)}] = (1+o(1)) E [V_{0,m,2m}] \,.
\] 
It then follows that 
\[
2 \sum_{u=1}^p E \underline{Z}_u = (1+o(1)) \frac{\pi^2}{8} \barh_4(n) 
\]
and consequently, 
\[
\frac{\pi^2}{4} E \Theta_n = (1+o(1)) \log_2 n \,.
\]
Thus it suffices to establish \eqref{md-Delta} with $\overline{\Delta}_{n,k}$ replaced by 
\begin{align*}
\frac{\pi^4}{32} \frac{n}{(\log n)^2} \overline{\Theta}_n  \,,
\end{align*}
which in turns follows from \eqref{mgf-Theta}, upon setting $c_1=32 \pi^{-4} c_2>0$.
\end{proof}


\section{\abbr{LIL} for \abbr{srw} on $\Z^d$, $d \ge 5$: Proof of Theorem \ref{m3}}\label{sec:m2-dge5}
Since $\crn$ for the \abbr{srw} on $\Z^{d}$, $d \ge 5$, has similar
structural properties to the size of the range of the \abbr{srw} on $\Z^{d-2}$,
we establish Theorem \ref{m3} by adapting the proof in \cite[\blue{Section 3}]{BK02} 
for the \abbr{LIL} of the latter sequence. Specifically, setting 
$\rho_n := \sqrt{n \log n}$ when $d=5$ and otherwise $\rho_n := \sqrt{n}$, 
we have from \cite[Thm. A]{S20} in case $d=5$, and from \cite[Lemma 3.3]{As3} when
$d \ge 6$, that
\begin{equation}\label{hjm6}
\lim_{n \to \infty} \frac{1}{\rho_n} \| \ocrn \|_2 = \sigma_d \,.
\end{equation}
\blue{Next, recalling for integers $0 \le a \le b \le c$, the notations of \eqref{def:Rab},
\begin{equation}\label{def:Rab2}
R_{a,b} := \ca(\R(a,b]) \,, \qquad V_{a,b,c} := R_{a,b} + R_{b,c} - R_{a,c} \ge 0 \,,
\end{equation}
we} proceed with the following variant of \cite[Lemma 3.2]{As3}.
\begin{lem}\label{vv}
For any $0 \le a < b$, set
\begin{align*}
\widetilde{V}_{a,b}  := \sup_{t \ge b} \{ V_{a,b,t} \} \,, \qquad 
\hat{V}_{a,b}  := \sup_{s \le a} \,\{ V_{s,a,b} \} \,.
\end{align*}
Then, for some $C_{d,\ell}$ finite, any $\ell \ge 1$ and all $a<b$,
\begin{align}\label{eq:Vab-bd}
E[\widetilde{V}_{a,b}^\ell]\le C_{d,\ell} \, f_d(b-a)^{\ell}\,,  \qquad
E[\hat{V}_{a,b}^\ell]\le C_{d,\ell} \, f_d(b-a)^{\ell}\,,
\end{align}
where 
\begin{align*}
f_5(n)=\sqrt{n}, \quad f_6(n)=\log n, \quad 
f_d(n)=1 \quad \forall d\ge7.  
\end{align*}
\end{lem}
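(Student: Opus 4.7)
The plan is to first dominate the supremum by a single monotone expression and then bound its moments via multi-point Green's function estimates. Starting from the inequality that underlies \eqref{eq:Wjbd} (from \cite[Prop.~1.6]{As5}), namely $V_{a,b,t} \le \chi(\R(a,b],\R(b,t]) + \chi(\R(b,t],\R(a,b])$, I would drop the survival probabilities $P^\cdot(\tau_\cdot = \infty) \le 1$ to obtain
\begin{equation*}
V_{a,b,t} \le 2 \sum_{i \in (a,b]} \sum_{\ell \in (b,t]} G(S_i,S_\ell),
\end{equation*}
whose right-hand side is non-decreasing in $t$. Monotone convergence as $t \uparrow \infty$ then yields
\begin{equation*}
\tilde V_{a,b} \le Y_{a,b} := 2\sum_{i \in (a,b]} \sum_{\ell > b} G(S_i,S_\ell),
\end{equation*}
and the analogous reasoning applied to $V_{s,a,b}$, which is non-decreasing as $s$ decreases, gives $\hat V_{a,b} \le 2 \sum_{i \le a} \sum_{\ell \in (a,b]} G(S_i,S_\ell)$.

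By stationarity of the \abbr{srw} increments it is enough to bound $E[Y_{0,b}^\ell]$. Expanding the $\ell$-th power produces sums of multi-point expectations $E[\prod_{k=1}^\ell G(S_{i_k},S_{\ell_k})]$, which I would control by ordering the indices and iterating the strong Markov property, in the spirit of \eqref{eq:induct}--\eqref{mult} but now in dimension $d \ge 5$. These factorize, up to a combinatorial constant depending only on $\ell$, into products of single-step contributions bounded via $E[G(0,S_m)] \le C m^{-(d-2)/2}$. Summation then gives
\begin{equation*}
E\, Y_{0,b} \le C \sum_{i=1}^b \sum_{m > b-i} m^{-(d-2)/2} \le C' f_d(b),
\end{equation*}
since the inner sum behaves like $(b-i)^{-(d-4)/2}$, whose summation over $i$ yields $\sqrt{b}$, $\log b$, and $1$ in dimensions $d=5$, $d=6$, and $d \ge 7$ respectively; the same bookkeeping upgraded to $\ell$-tuples delivers $E[Y_{0,b}^\ell] \le C_{d,\ell} f_d(b)^\ell$.

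The $\hat V_{a,b}$ case follows by a parallel argument, the only difference being that the outer index now ranges over the finite interval $(a,b]$; a direct calculation along the same lines confirms that the dominant contribution still scales as $f_d(b-a)$. The main technical obstacle will be arranging the multi-point moment estimate tightly enough to recover the $\ell$-th power of the first moment, which requires a careful induction on $\ell$ via the Markov property, analogous to the $d=4$ treatment of Lemma \ref{extail:lem}. Fortunately the task is considerably less delicate here because for $d \ge 5$ all of the relevant tail sums converge with room to spare, and the supremum over $t$ (and over $s$) has already been absorbed for free by the monotonicity step in the first paragraph.
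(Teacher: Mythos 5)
Your first half coincides with the paper's own reduction: bounding $V_{a,b,t}$ via \cite[Prop.~1.6]{As5} by the Green's-function double sum, uniformly in $t$ (and in $s$), is exactly \eqref{eq:Vbd} — your version is even slightly cleaner, since taking the sup inside the monotone double sum avoids the monotonicity of $B\mapsto \ca(A)+\ca(B)-\ca(A\cup B)$ that the paper cites — and your first-moment computation is correct and matches the $\ell=1$ case, i.e.\ the content of \eqref{Lem31-As3}.

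The gap is in the higher moments, which are the actual point of the lemma. The multi-point expectations do \emph{not} ``factorize, up to a combinatorial constant, into products of single-step contributions bounded via $E[G(0,S_m)]\le Cm^{-(d-2)/2}$'': taking $i_1=i_2=i$, $\ell_1=\ell_2=\ell$ gives $E[G(S_i,S_\ell)^2]\asymp (\ell-i)^{-d/2}$, which for every $d\ge 5$ is much larger than $(\ell-i)^{-(d-2)}$, so no such pointwise product bound holds. What a chaining argument in the spirit of \eqref{eq:induct}--\eqref{mult} really yields is a bound in which the decay $(d-2)/2$ of each Green factor is \emph{split} between a time gap of the finite walk and a time gap of the infinite one; since the second walk is infinite, its gap exponents must exceed $1$ for the gap sums to converge, and the residual exponents on the finite walk must still sum over $[1,b]^\ell$ to $f_d(b)^\ell$. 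This power counting is exactly borderline at $d=6$: any crude split produces $b^{\epsilon\ell}$ rather than the required $(\log b)^\ell$, and at $d=5$ an even split assigns only $3/4$ to the infinite walk's gaps, whose sums then diverge. So your closing claim that the case $d\ge 5$ converges ``with room to spare'' is wrong precisely for $d=5,6$, where $f_d$ diverges, and the step you defer is the whole difficulty. The paper sidesteps this bookkeeping: it represents $E[\widetilde V_{0,n}^{\ell}]$ through local times as $\sum_{\underline x,\underline y}E[\prod_i L_n(x_i)]\,E[\prod_i L_\infty(y_i)]\prod_i G(x_i,y_i)$ and runs the induction of \cite[Lemma~3.2]{As3}, whose engine is the shift-uniform triple-Green estimate \eqref{Lem31-As3} (with the maximum over the shift attained at $a=0$), each inductive step contributing one factor $C_d f_d(n)$. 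To complete your route you would have to either carry out that induction or prove a genuinely sharp multi-point gap estimate (retaining the $\min$-structure of Lemma \ref{mo22}) and verify the borderline $d=6$ summation; as written, the $\ell\ge2$ step is a genuine gap.
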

\begin{proof} By the shift invariance of the \abbr{srw}, we may \abbr{wlog} set $a=0$. 
Further, in view of \cite[(2.9) \& (2.11)]{As5},  for a fixed set $A$, the function 
\[ 
B \mapsto \ca(A)+\ca(B)-\ca(A \cup B)
\]
is non-decreasing (and bounded above by $\ca(A)$). In particular, 
the value of $\widetilde{V}_{0,n}$ is attained for $t \to \infty$. 
Thus, from \cite[Prop. 1.2]{As3} 
we arrive at 
\begin{equation}\label{eq:Vbd} 
\widetilde{V}_{0,n} \le 2 \sum_{x \in \R_n}\sum_{y \in \R(n,\infty)} G(x,y) \stackrel{d}{=} 
2 \sum_{x \in \R_n}\sum_{y \in \hat{\R}_\infty} G(x,y), \,
\end{equation}
where $\hat{\R}_\infty$, denotes the range of an independent \abbr{srw}.  
Similarly, the value of $\hat{V}_{0,n}$ is attained at $s \to -\infty$, 
with the right-side of \eqref{eq:Vbd} also bounding $\hat{V}_{0,n}$ 
(we then have $\R(-\infty,0]$ instead of $\R(n,\infty)$ in \eqref{eq:Vbd}).
Thereafter, adapting \blue{\cite[Section 3.1]{As3}} 
yields \eqref{eq:Vab-bd}. Indeed, 
with $p_{2k}(x,y):=P^x (S_{2k}=y)$ the 
square of a transition probability, we have as in the proof of \cite[Lemma 3.1]{As3}, 
that for \emph{even} $k \ge 0$ and any $a \in \Z^d$,
\begin{equation*}
s_a := \sum_{x,y \in \Z^d} G(0,x) G(0,y) p_k (x,y+a)  \le s_0 \,.
\end{equation*}
In case of a lazy \abbr{srw}, this applies for any $k \ge 0$, so  
summing over $k \le n$ yields that 
\begin{align}\label{Lem31-As3}
\max_{a \in \Z^d} \Big\{
\sum_{x,y \in \Z^d} G(0,x) G(0,y) G_n(x,y+a) \Big\} &= \sum_{x,y \in \Z^d} G(0,x) G(0,y) G_n(x,y) 
\\
&= \sum_{x,y \in \Z^d} G_n(0,x) G(0,y) G (x,y) \le C_{d} \, f_d(n) \,,
\nonumber 
\end{align}
where we have utilized \cite[(3.4)]{As3} for the latter inequality. Further, as in \cite{As3}, 
up to an increase of $C_d$ value, \eqref{Lem31-As3} extends to the original \abbr{srw}. Now, 
similarly to \cite[(3.5)]{As3}, it follows from \eqref{eq:Vbd} that 
\[
E[\widetilde{V}_{0,n}^\ell] \le 2^\ell \sum_{\underline{x},\underline{y}} 
E\Big[ \prod_{i=1}^\ell L_n(x_i) \Big] E \Big[\prod_{i=1}^\ell L_\infty(y_i)\Big]
\prod_{i=1}^\ell G(x_i,y_i) \,,
\]
where $L_n(x)$ denotes the total \abbr{srw} local time at $x \in \Z^d$, during time interval $[1,n]$
(and the same bound applies for $E[\hat{V}_{0,n}^\ell]$).
For $\ell=1$ we thus get \eqref{eq:Vab-bd} out of \eqref{Lem31-As3} 
(as $E [L_n(x)] = G_n(0,x)$ and $E [L_\infty(y)] = G(0,y)$). The general case then follows by an
inductive argument, as in the proof of \cite[Lemma 3.2]{As3}, utilizing also that $a=0$
is optimal in \eqref{Lem31-As3}.
\end{proof}

Utilizing Lemma \ref{vv}, we next establish 
the analog of \cite[Lemma 3.3]{BK02} for $\crn$.
\begin{lem}\label{l-moment}
For any $d \ge 5$, $m \ge 3$, there exists $c_m$ finite, such that for all $b > a \ge 0$
\begin{equation}\label{eq:BK-33a}
\| \, \overline{R}_b -\overline{R}_a  \, \|_m  \le c_m \, \rho_{b-a} \,.
\end{equation}
Further, for some ${\bar c}_m$ finite and any $\lambda>0$, $b > a \ge 0$,
\begin{equation}\label{eq:BK-33b}
P (\max_{n \in [a,b]} \{|\ocrn -\overline{R}_a| \} > \lambda \rho_{b-a}) \le {\bar c}_m \lambda^{-m} \,.
\end{equation}
\end{lem}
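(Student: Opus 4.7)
My plan is to adapt \cite[Lemma 3.3]{BK02}, which establishes the analogous result for $|\R_n|$ on $\Z^2$, to the present capacity setting, using Lemma \ref{vv} in place of the intersection-moment estimates available there.

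First I would reduce \eqref{eq:BK-33a} to the case $a=0$. By the identity $V_{0,a,b} = R_{0,a}+R_{a,b}-R_{0,b}\ge0$ from \eqref{def:Rab}, one has
\[
\overline{R}_b - \overline{R}_a \;=\; \overline{R_{a,b}} \,-\, \overline{V_{0,a,b}},
\]
and by shift invariance of the \abbr{srw} increments $R_{a,b}\stackrel{d}{=}R_{b-a}$. Since $V_{0,a,b}\le \hat V_{a,b}$, Lemma \ref{vv} gives $\|\overline{V_{0,a,b}}\|_m \le 2\|\hat V_{a,b}\|_m \le C\, f_d(b-a)\le C'\rho_{b-a}$, using $f_5(n)=\sqrt n\le \rho_n$, $f_6(n)=\log n\le \rho_n$ and $f_d(n)=1\le \rho_n$ for $d\ge7$. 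It then suffices to prove $\|\overline{R_n}\|_m \le c_m \rho_n$.

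For this moment bound I would proceed by a dyadic induction based on $R_n = R_{n/2} + R_{n/2,n} - V_{0,n/2,n}$, in which $R_{n/2}$ and $R_{n/2,n}$ are independent and identically distributed. A two-variable Rosenthal-type inequality gives
\[
\|\overline{R_{n/2}}+\overline{R_{n/2,n}}\|_m \le 2^{1/m}\,\|\overline{R_{n/2}}\|_m + C_m\,\|\overline{R_{n/2}}\|_2,
\]
so invoking \eqref{hjm6} for the $L^2$-term and Lemma \ref{vv} for $\|\overline{V_{0,n/2,n}}\|_m$, and setting $\phi_m(n):=\|\overline{R_n}\|_m/\rho_n$, one gets a recursion
\[
\phi_m(n)\le \alpha\,\phi_m(n/2) + C, \qquad \alpha := 2^{1/m}\,\rho_{n/2}/\rho_n.
\]
Since $\rho_{n/2}/\rho_n\to 1/\sqrt 2$ (with an asymptotically negligible logarithmic correction when $d=5$), one has $\alpha<1$ for $m\ge3$. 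Iteration then yields $\phi_m(n)\le c_m$ uniformly in $n$, with the trivial bound $R_n\le n$ serving to start the induction at small scales.

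For the maximal inequality \eqref{eq:BK-33b} I would use dyadic chaining on $[a,b]$. Let $L=\lceil \log_2(b-a)\rceil$ and $t_{\ell,j}:=a+j\lceil(b-a)2^{-\ell}\rceil$ for $0\le \ell\le L$ and $0\le j\le 2^\ell$. Each $n\in[a,b]$ is approximated to within distance $1$ by a telescoping chain of such dyadic points, so
\[
|\overline{R_n}-\overline{R}_a| \le \sum_{\ell=0}^L \max_{j<2^\ell}\bigl|\overline{R_{t_{\ell,j+1}}}-\overline{R_{t_{\ell,j}}}\bigr| + O(1),
\]
the last term absorbing the unit-per-step variation of $R_n$. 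Applying $\|\max_j|Y_j|\|_m\le\bigl(\sum_j\|Y_j\|_m^m\bigr)^{1/m}$ together with \eqref{eq:BK-33a} at each scale gives
\[
\Big\|\max_{n\in[a,b]}|\overline{R_n}-\overline{R}_a|\Big\|_m \le c_m\sum_{\ell=0}^L 2^{\ell/m}\rho_{(b-a)/2^\ell} \le c_m'\rho_{b-a},
\]
the latter sum being geometric with ratio $2^{1/m-1/2}<1$ for $m\ge3$. Then \eqref{eq:BK-33b} follows by Markov's inequality.

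The main obstacle is closing the Rosenthal-based recursion with a constant depending only on $m$: one must ensure the multiplicative coefficient in front of $\phi_m(n/2)$ is strictly smaller than $\rho_n/\rho_{n/2}\sim\sqrt 2$, and any extra factor $C_m>1$ in the two-variable Rosenthal bound could defeat this. A natural alternative is to use the full $k$-term decomposition \eqref{eq:decomp4} with $k$ growing moderately, where standard Rosenthal provides a universal constant, but then the moment control of $\Delta_{n,k}$ via Lemma \ref{vv} becomes the delicate point, particularly in the borderline case $d=5$ where the logarithmic factor in $\rho_n$ must be tracked with care.
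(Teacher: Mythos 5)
Your proposal is sound, and its skeleton coincides with the paper's: the reduction to $a=0$ via $V_{0,a,b}\le \hat V_{a,b}$ and Lemma \ref{vv}, and the dyadic decomposition $R_n=R_{\lceil n/2\rceil}+R_{\lceil n/2\rceil,n}-V_{0,\lceil n/2\rceil,n}$, are exactly what the paper uses. The genuine difference is in how the cross terms of the even-moment expansion are controlled. The paper inducts on the moment order: having $\sup_n \rho_n^{-k}\|\ocrn\|_k<\infty$ for all even $k\le 2\ell-2$ (base case \eqref{hjm6}), the mixed terms in $E[(\overline R_n+\overline{\hat R}_n)^{2\ell}]$ are absorbed into a constant $c_\ell$, giving the contraction $L_{2n,2\ell}^{2\ell}\le 2^{-(\ell-1)}L_{n,2\ell}^{2\ell}+2^{-\ell}c_\ell$ plus $o_n(1)$ along powers of two, followed by a separate reverse-comparison step to pass from $n=2^j$ to general $n$. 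You instead fix $m$ and invoke a two-variable Rosenthal-type bound with only an $L^2$ error; the worry you flag about its constant is unfounded: for even $m$ and independent centred $X\stackrel{d}{=}Y$, the binomial expansion together with $E[|X|^k]\,E[|X|^{m-k}]\le \|X\|_2\,\|X\|_m^{m-1}$ for $2\le k\le m-2$ (log-convexity of $p\mapsto\|X\|_p$) yields $E[(X+Y)^m]\le 2\|X\|_m^m+C_m\|X\|_2\|X\|_m^{m-1}$, hence $\|X+Y\|_m\le 2^{1/m}\|X\|_m+C_m'\|X\|_2$ with the coefficient exactly $2^{1/m}$; since $2^{1/m}\rho_{\lceil n/2\rceil}/\rho_n\to 2^{1/m-1/2}<1$ (the $\log$ factor at $d=5$ only helps), your recursion closes, and odd $m\ge3$ follows from even $m$ by monotonicity of norms. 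Your per-$n$ recursion also dispenses with the paper's dyadic-to-general-$n$ step. For \eqref{eq:BK-33b} the paper merely imports the corresponding derivation from \cite{BK02}, whereas your explicit chaining with $\|\max_j|Y_j|\|_m\le(\sum_j\|Y_j\|_m^m)^{1/m}$ and geometric ratio $2^{1/m-1/2}<1$ is a correct, self-contained substitute (the unit-increment remainder and very small $b-a$ are trivially absorbed). So: same decomposition and same use of Lemma \ref{vv}, but your cross-term control (interpolation within a fixed $m$) and maximal inequality (chaining) replace the paper's double induction over moment order and its citation of \cite{BK02}; both routes work, the paper's avoiding any Rosenthal-type input at the cost of the extra induction layer.
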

\begin{proof} \blue{From \eqref{def:Rab2} we see that}
\[
V_{0,a,b} = R_a + R_{a,b} - R_{b} \in [0,\hat{V}_{a,b}] \,,
\]
for $\hat{V}_{a,b}$ of Lemma \ref{vv}. In particular, for any $m \ge 3$, $b>a$,
\[
0 \le E[V_{0,a,b}]^m \le E[V_{0,a,b}^m] \le E[\hat{V}_{a,b}^m] \le C_{d,m} f_d(b-a)^{m} 
= o (\rho_{b-a}^m)\,.
\]
We can thus replace $R_b-R_a$ in 
\eqref{eq:BK-33a} by $R_{a,b}$ and thereby 
due to the shift invariance of the increments, set \abbr{wlog} $a=0$ (whereupon $R_a=0$). 
Hence, analogously to \cite[(3.34)]{BK02}, it suffices for \eqref{eq:BK-33a} 
to show inductively over $\ell \ge 1$,
that $\sup_n \{L_{n,2\ell}\}$ is finite for 
\[
L_{n,\ell} := \frac{1}{\rho_n} \| \ocrn \|_{\ell} \,.
\]
The induction basis $\ell=1$ is merely \eqref{hjm6}. Further, with 
\[
\lim_{n \to \infty} 
\rho_{2n}^{-1} \sup_{a < 2n} \| \overline{V}_{0,a,2n} \|_{2\ell}  = 0 \,,
\]
by the preceding decomposition, we can and shall replace $R_{2n}$ in the induction step, by 
\[
\crn + R_{n,2n} \stackrel{d}{=} \crn + \hat{R}_n \,,
\]
where $\hat{R}_n$ denotes the capacity of the range of an independent second \abbr{srw}. 
For any $\ell \ge 2$, by the induction hypothesis $\sup_n \{L_{n,k}\}$ are finite for all $k \le 2 (\ell-1)$, 
hence
\[
\sup_{n,n'} \sum_{k=2}^{2\ell-2} {2\ell \choose k} \,
L_{n,k}^k \, L_{n',2\ell-k}^{2\ell-k} := c_\ell < \infty \,.
\]
Recalling that $\rho_{n} \le 2^{-1/2} \rho_{2n}$, we thus get similarly to \cite[(3.37) \& (3.38)]{BK02} that 
\[
L_{2n,2\ell} \le o_n(1) + \Big(  2^{-(\ell-1)} L_{n,2\ell}^{2\ell}  + 2^{-\ell} c_\ell \Big)^{1/(2\ell)}\,,
\]
from which it follows as in \cite{BK02} that $\sup_j \{L_{2^j,2\ell}\}$ is finite. Finally, for any 
$n \in [2^{j-1},2^j)$, $j \ge 2$, we have as in the preceding that
\[
R_{2^j} \stackrel{d}{=} R_n + \hat{R}_{2^j-n} - V_{0,n,2^j}.
\]
Upon centering, taking the $2\ell$-th power and isolating the $2\ell$-th power of 
$\ocrn$, the preceding identity results with
\[
[L_{2^j,2\ell} + o_j(1)]^{2\ell} + c_\ell  \ge   (\rho_n/\rho_{2^j})^{2\ell} L_{n,2\ell}^{2\ell}  \ge 
4^{-\ell} L_{n,2\ell}^{2\ell} \,.
\]
Thus, $\sup_n \{L_{n,2\ell}\}$ is finite as well, completing the induction step and thereby 
establishing \eqref{eq:BK-33a}. Finally, we get \eqref{eq:BK-33b} out of \eqref{eq:BK-33a}
precisely as in deriving \cite[(3.39)]{BK02} out of \cite[(3.40)]{BK02}.
\end{proof}

\blue{Recall the decomposition \eqref{eq:decomp4}-\eqref{def:Delta},
for the independent variables $U_j :=R_{n_{j-1},n_j}$ and any increasing $\{n_k\}$ starting at $n_0=0$, 
\begin{equation}\label{eq:decDelta}
R_{n_k} = \sum_{j=1}^k U_j - \Delta_{n_k,k} \,, \qquad 
\Delta_{n_k,k} := \sum_{j=1}^{k-1} V_{n_{j-1},n_j,n_k} 
\,.
\end{equation}
Centering the random variables of the preceding identity} 
we arrive at
\begin{equation}\label{eq:decomp}
\overline{R}_{n_k} 
= \sum_{j=1}^k \overline{U}_j - \overline{\Delta}_{n_k,k}  \,,
\end{equation}
with 
zero-mean, independent variables 
$\overline{U}_j$.
Proceeding to show that $\overline{\Delta}_{n_k,k}$
has a negligible effect on $\overline{R}_{n_k}$, first recall from \eqref{hjm6} that
\begin{align}\label{hjm*}
\lim_{j\to \infty}\frac{E[\overline{U}_j^2]} {\rho_{n_j-n_{j-1}}^2} = \sigma_d^2 \,,
\end{align}
whereas \eqref{eq:BK-33a} at $a=n_{j-1}$, $b=n_j$, amounts to 
\begin{align}\label{hjm}
E[|\overline{U}_j|^m] \le (c_m \, \rho_{n_j-n_{j-1}} )^{m}\,.
\end{align}

In case $d=5$ we take the same values of $\alpha$, $\beta<1/2$ and $\{n_k\}$ 
as in the proof of \cite[Thm. 2.1]{BK02}. Lemma \ref{vv} at $\ell=4$ is then 
the analog of \cite[(3.2)]{BK02}, and utilizing it at $a=n_{j-1}$, $b=n_j$, $j<k$, we 
find by following verbatim, the derivation of \cite[(3.9)]{BK02}, that for some $c$ finite
\begin{align}\label{limv}
\limsup_{k \to \infty} 
\frac{|\overline{\Delta}_{n_k,k}|}{\sqrt{n_k} (\log n_k)^\beta} \le c
\quad \text{a.s.}
\end{align}
Thereafter,  
substituting (\ref{limv}) for \cite[(3.9)]{BK02} and \eqref{hjm} to get \cite[(3.16)]{BK02},
by the same reasoning as in the proof of \cite[Thm. 2.1]{BK02},  we find that a.s. 
\begin{equation}\label{eq:LIL-ni}
\lim_{k \to \infty} h_d(n_k)^{-1} \big[ \overline{R}_{n_k} - \sigma_d B_{\rho_{n_k}^2} \big] = 0 \,,
\end{equation}
for some one-dimensional standard Brownian motion $(B_t, t \ge 0)$. As shown after 
\cite[(3.17)]{BK02} (apart from replacing \cite[Lemma 3.3(b)]{BK02} by \eqref{eq:BK-33b}),
the stated \abbr{lil} is then a direct consequence of Kinchin's \abbr{LIL} for the latter Brownian motion.
 
In case $d\ge 6$, we take $\{n_k\}$ again as in the proof of \cite[Thm. 2.1]{BK02}, 
except that now this is done for the choice of $\alpha=1$. 
Then, by Lemma \ref{vv}, for $C=C_{d,2}$ and any $1 \le j < k$,
\[
{\rm Var} (V_{n_{j-1},n_j,n_k}) \le C ( \log (n_j-n_{j-1}) )^2 \le C (\log n_k)^2 \,.
\]
Thus, for any $\beta >0$ and all $k$, by Markov's inequality and \blue{the definition of 
$\Delta_{n_k,k}$ (see \eqref{eq:decDelta}),}
\begin{align*}
P\Big( \big|\overline{\Delta}_{n_k,k} \big|\ge n_k^\beta\Big) 
\le n_k^{-2 \beta} \mathrm{Var} \Big( \Delta_{n_k,k} \Big) 
\le C n_k^{-2 \beta} k^2 (\log n_k)^2 \,.
\end{align*}
Since $\big| \{n_k\} \cap [2^{\ell},2^{\ell+1}) \big| \le \ell$ for any $\ell \ge 1$, eventually 
$k \le (\log n_k)^2$.
Hence, by the first Borel-Cantelli lemma, we have that for any $\beta>0$, 
\begin{align}\label{limv6}
\limsup_{k \to \infty} 
n_k^{-\beta} \big| \overline{\Delta}_{n_k,k} \big|  \le 1
\quad \text{a.s.}
\end{align}
We then get \eqref{eq:LIL-ni} by following, as for $d=5$,
the proof of \cite[Thm. 2.1]{BK02}, utilizing again \eqref{hjm*}-\eqref{hjm}, 
while having now, via \eqref{limv6} at $\beta<1/2$, a negligible contribution 
at scale $\rho_n=\sqrt{n}$ (instead of \eqref{limv} and the scale $\sqrt{n \log n}$
throughout \cite[(3.13)-(3.17)]{BK02}). Finally, recall that $n_{k+1} -n_k \le n_k/\ell$
whenever $n_k \in [2^\ell,2^{\ell+1})$. Hence, in view of \eqref{eq:BK-33b} at $m=6$ 
and $\lambda=\varepsilon h_d(n_k)/\sqrt{n_{k+1}-n_k}$, we have that for 
some $c_{\varepsilon}$ finite, any $\varepsilon>0$ and $n_k \in [2^\ell,2^{\ell+1})$, 
\[
P \Big( \max_{n \in (n_k,n_{k+1})} \big\{ |\ocrn - \overline{R}_{n_k} | \big\} > \varepsilon h_d(n_k) \Big) 
\le c_{\varepsilon}\, \ell^{-3} \,.
\]
With at most $\ell$ values of such $n_k$, by the first Borel-Cantelli lemma, the events on the \abbr{lhs}
a.s. occur only for finitely many values of $k$ and the stated \abbr{LIL} thus 
follows, as before, from \eqref{eq:LIL-ni}.

\end{document}